\newcommand{\aravind}[1]{} 
\newcommand{\tom}[1]{} 
\newcommand{\mike}[1]{} 
\newcommand{\NB}[1]{}
\newcommand{\TODO}[1]{}
\renewcommand{\todo}[1]{}
\newcommand{\aravind}[1]{\textcolor{red}{#1}} 
\newcommand{\tom}[1]{\todo[color=blue!40]{#1}} 
\newcommand{\mike}[1]{\textcolor{green}{#1}} 
\newcommand{\NB}[1]{\todo[color=gray!40]{#1}}
\newcommand{\TODO}[1]{\todo[color=red]{#1}}
\newcommand{\colim}{\operatorname{colim}}
\newcommand{\Spec}{\operatorname{Spec}}
\newcommand{\weq}{\simeq}
\newcommand{\sma}{{\scriptstyle{\wedge}\,}}
\newcommand{\op}[1]{\operatorname{#1}}
\renewcommand{\hom}{\operatorname{Hom}}
\newcommand{\Map}{\operatorname{Map}}
\newcommand{\iMap}{\underline{\Map}}
\newcommand{\cplx}{{\mathbb C}}
\newcommand{\Z}{{\mathbb Z}}
\newcommand{\A}{{\mathbb A}}
\newcommand{\aone}{{\mathbb A}^1}
\newcommand{\pone}{{\mathbb P}^1}
\newcommand{\gm}[1]{{{\mathbb G}_{m}^{#1}}}
\newcommand{\ho}[2][]{\mathrm{Spc}_{#1}({#2})}
\newcommand{\bpi}{\bm{\pi}}
\newcommand{\Nis}{{\operatorname{Nis}}}
\newcommand{\Zar}{\operatorname{Zar}} 
\newcommand{\SH}{{\mathrm{SH}}}
\newcommand{\Shv}{{\mathrm{Shv}}}
\newcommand{\Sm}{\mathrm{Sm}}
\newcommand{\Sch}{\mathrm{Sch}}
\newcommand{\Spc}{\mathrm{Spc}}
\newcommand{\Ab}{\mathrm{Ab}}
\newcommand{\Grp}{\mathrm{Grp}}
\newcommand{\K}{{{\mathbf K}}}
\renewcommand{\H}{{{\mathbf H}}}
\newcommand{\Singaone}{\operatorname{Sing}^{\aone}\!\!}
\renewcommand{\setminus}{\smallsetminus}
\def\adj{\leftrightarrows}
\newcommand{\cof}{\mathrm{cof}}
\newcommand{\fib}{\mathrm{fib}}
\newcommand{\Addresses}{{
\bigskip
\footnotesize

A.~Asok, Department of Mathematics, University of Southern California, 3620 S.~Vermont Ave., Los Angeles, CA 90089-2532, United States; E-mail address: \url{asok@usc.edu}
\medskip

T.~Bachmann, Department of Mathematics, JGU Mainz, Staudingerweg 9, 55128 Mainz, Germany; E-mail address: \url{tom.bachmann@zoho.com}
\medskip

M.J.~Hopkins, Department of Mathematics, Harvard University, One Oxford Street, Cambridge, MA 02138, United States \textit{E-mail address:} \url{mjh@math.harvard.edu}
}}
\newcounter{intro}
\theoremstyle{plain}
\newtheorem{theorem}{Theorem}[section]
\newtheorem{lem}[theorem]{Lemma}
\newtheorem{cor}[theorem]{Corollary}
\newtheorem{proposition}[theorem]{Proposition}
\newtheorem*{claim*}{Claim} 
\newtheorem*{thm*}{Theorem}
\newtheorem*{problem*}{Problem}
\newtheorem{thmintro}{Theorem}
\theoremstyle{definition}
\newtheorem{defn}[theorem]{Definition}
\newtheorem{construction}[theorem]{Construction}
\theoremstyle{remark}
\newtheorem{rem}[theorem]{Remark}
\newtheorem{remintro}[thmintro]{Remark}
\newtheorem{ex}[theorem]{Example}
\newtheorem{entry}[theorem]{}
\numberwithin{equation}{subsection}
\begin{document}
\pagestyle{fancy}
\renewcommand{\sectionmark}[1]{\markright{\thesection\ #1}}
\fancyhead{}
\fancyhead[LO,R]{\bfseries\footnotesize\thepage}
\fancyhead[LE]{\bfseries\footnotesize\rightmark}
\fancyhead[RO]{\bfseries\footnotesize\rightmark}
\chead[]{}
\cfoot[]{}
\setlength{\headheight}{1cm}

\author{Aravind Asok\thanks{Asok was partially supported by National Science Foundation Awards DMS-1802060 and DMS-2101898} \and Tom Bachmann \and Michael J. Hopkins\thanks{Hopkins was partially supported by National Science Foundation Award DMS-1810917}}

\title{{\bf On the Whitehead theorem for nilpotent motivic spaces}}
\date{}
\maketitle

\begin{abstract}
	We improve some foundational connectivity results and the relative Hurewicz theorem in motivic homotopy theory, study functorial central series in motivic local group theory, establish the existence of functorial Moore--Postnikov factorizations for nilpotent morphisms of motivic spaces under a mild technical hypothesis and establish an analog of the Whitehead theorem for nilpotent motivic spaces.  As an application, we deduce a surprising unstable motivic periodicity result.
\end{abstract}

\begin{footnotesize}
\tableofcontents
\end{footnotesize}

\section{Introduction}
In classical algebraic topology, Whitehead proved that a map of simply connected spaces that induces an isomorphism on homology is a weak equivalence.  This result was generalized in various directions:  Dror introduced the notion of a nilpotent space and established that a map of nilpotent spaces that is a homology isomorphism is necessarily a weak equivalence \cite[Theorem 3.1]{DrorWhitehead}.  In modern terminology, these results establish that the stabilization functor (from unstable to stable homotopy theory) is conservative on the subcategory of nilpotent spaces.  

In this work, we analyze versions of these results in motivic homotopy theory.  Assume $k$ is a field and write $\Sm_k$ for the category of smooth $k$-schemes.  The $\infty$-category of motivic spaces $\ho{k}$ is a (left Bousfield) localization of the $\infty$-category of presheaves of spaces on $\Sm_k$. Following \cite[Definition 3.2.1]{AFHLocalization}, we say that a motivic space is {\em nilpotent} if its fundamental (Nisnevich) sheaf of groups is $\aone$-nilpotent and acts $\aone$-nilpotently on the higher homotopy sheaves (we called such spaces $\aone$-nilpotent in \cite{AFHLocalization}; our terminology differs slightly from that used in the previous paper and we recall in the notation and conventions below). 

In the context of motivic homotopy theory, nilpotent spaces arise more readily than $1$-connected spaces.  Indeed, the only $1$-connected smooth proper variety over a field is $\Spec k$ \cite[Proposition 5.1.4]{AM}.  On the other hand, $\pone$ or, more generally, flag varieties for split simply connected semi-simple algebraic groups are nilpotent \cite[Theorem 3.4.8]{AFHLocalization} and have non-abelian fundamental groups!  Likewise, affine Grassmannians of split, simply connected, semi-simple group schemes are nilpotent by \cite[Theorem 15]{Bachmanngrassmannian} in conjunction with \cite[Theorem 3.3.17, Example 3.4.1]{AFHLocalization}.

In this note, we improve our techniques for dealing with nilpotent motivic spaces from \cite{AFHLocalization}.  Before treating nilpotent motivic spaces in general, we further expand the arsenal of tools for dealing with sheaves of groups whose classifying spaces are motivic local: these are F. Morel's ``strongly $\aone$-invariant sheaves of groups.  One key structural deficit of the category of strongly $\aone$-invariant sheaves of groups is that given an epimorphism of sheaves of groups whose source is strongly $\aone$-invariant and whose target is $\aone$-invariant, it is unclear whether the target is automatically strongly $\aone$-invariant.  We axiomatize this property with the notion of a very strongly $\aone$-invariant sheaf of groups (see Definition~\ref{defn:verystronglya1invariant}).  We then show that the class of very strongly $\aone$-invariant sheaves of groups contains many examples of interest.  

We adapt various functorial constructions of classical group theory to the motivic setting.  We show that actions of $\aone$-nilpotent sheaves of groups have {\em functorial} central series (Proposition~\ref{prop:functorialpicentralseries}), we improve the relative Hurewicz theorem from \cite[Theorem 4.2.1]{AFHLocalization} (Theorem~\ref{thm:relativeHurewicz}), we show that nilpotent morphisms of motivic spaces have {\em functorial} principal refinements (Theorem \ref{thm:nilpotentprincipalrefinement}) under mild hypotheses, and we demonstrate the Whitehead theorem for nilpotent motivic spaces (see Theorem~\ref{thm:whitehead}).

In order to state our results, we use the $S^1$-stable motivic homotopy category $\SH^{S^1}(k)$ as described by F. Morel \cite[\S 4]{MStable}.  Recall that there is a stabilization functor $\Sigma^{\infty}_{S^1}: \ho{k} \to \SH^{S^1}(k)$ which is Kan extended from the functor sending $X \in \Sm_k$ to its suspension spectrum $\Sigma^{\infty}_{S^1}X_+$, where $X_+$ is $X$ with a disjoint base-point attached.  


\begin{thmintro}[See {Theorem~\ref{thm:whitehead}}]
	If $k$ is a perfect field and $f: \mathscr{X} \to \mathscr{Y}$ is a morphism of nilpotent motivic $k$-spaces such that $\Sigma^{\infty}_{S^1}f$ is an equivalence, then $f$ is an equivalence.  
\end{thmintro}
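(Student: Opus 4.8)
The plan is to linearise the hypothesis and feed it into the improved relative Hurewicz theorem. Write $\widetilde{\mathbf H}^{\aone}_*$ for the reduced $\aone$-homology sheaves of a pointed motivic space (the $\aone$-homotopy sheaves of the free strictly $\aone$-invariant sheaf of abelian groups it generates). This functor is the composite of $\Sigma^{\infty}_{S^1}$ with a functor on $\SH^{S^1}(k)$, so if $\Sigma^{\infty}_{S^1}f$ is an equivalence then $f$ induces an isomorphism on every $\widetilde{\mathbf H}^{\aone}_*$. It therefore suffices to prove the a priori stronger statement that an $\aone$-homology isomorphism of nilpotent motivic $k$-spaces is an equivalence. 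Since $\mathscr X$ and $\mathscr Y$ are $\aone$-local, by Morel's unstable $\aone$-Whitehead theorem it is enough to show that the homotopy fibre $\mathscr F := \hofib(f)$ is $\aone$-weakly contractible.

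First I would analyse $\mathscr F$ via the long exact sequence of $\aone$-homotopy sheaves of $\mathscr F \to \mathscr X \to \mathscr Y$. Since $\widetilde{\mathbf H}^{\aone}_1(f)$ is an isomorphism and $\widetilde{\mathbf H}^{\aone}_1(-)\cong \bpi^{\aone}_1(-)^{\mathrm{ab}}$ on $\aone$-connected spaces, the map $\bpi^{\aone}_1(\mathscr X)\to\bpi^{\aone}_1(\mathscr Y)$ is surjective on abelianisations; as these sheaves of groups are nilpotent, Proposition~\ref{prop:functorialpicentralseries} makes the map itself an epimorphism, so $\mathscr F$ is $\aone$-connected. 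Feeding this back into the long exact sequence, together with the centrality of the image of $\bpi^{\aone}_2(\mathscr Y)\to\bpi^{\aone}_1(\mathscr F)$ and the nilpotence of all $\bpi^{\aone}_*(\mathscr X)$, $\bpi^{\aone}_*(\mathscr Y)$, one checks that $\mathscr F$ is again a nilpotent motivic space and, more precisely, that $\bpi^{\aone}_1(\mathscr X)$ acts nilpotently on every $\bpi^{\aone}_n(\mathscr F)$. (This is the motivic incarnation of the classical fact that a morphism of nilpotent spaces with connected fibre is a nilpotent fibration; it is where the functorial central series and the principal refinements of Theorem~\ref{thm:nilpotentprincipalrefinement} get used.)

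Now suppose for contradiction that $\mathscr F$ is not $\aone$-weakly contractible, and let $n\ge 1$ be least with $\bpi^{\aone}_n(\mathscr F)\ne 0$, so $\mathscr F$ is $\aone$-$(n-1)$-connected. Because $f$ is an $\aone$-homology isomorphism, the long exact sequence relating $\widetilde{\mathbf H}^{\aone}_*(\mathscr X)$, $\widetilde{\mathbf H}^{\aone}_*(\mathscr Y)$ and the relative $\aone$-homology $\mathbf H^{\aone}_*(\mathscr Y,\mathscr X)$ gives $\mathbf H^{\aone}_*(\mathscr Y,\mathscr X)=0$. On the other hand Theorem~\ref{thm:relativeHurewicz} applied to $f$ identifies $\mathbf H^{\aone}_{n+1}(\mathscr Y,\mathscr X)$ with the sheaf of $\bpi^{\aone}_1(\mathscr X)$-coinvariants of $\bpi^{\aone}_n(\mathscr F)$ (for $n=1$, of its abelianisation); hence these coinvariants vanish. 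But a nonzero sheaf of abelian groups carrying a nilpotent action of a sheaf of groups has nonzero coinvariants — stalkwise, using the functorial lower central filtration of the action from Proposition~\ref{prop:functorialpicentralseries}: if the coinvariants vanish the module equals the first term of that filtration, hence, iterating, equals its last term, which is $0$. For $n=1$ one combines this with the fact that a nontrivial nilpotent sheaf of groups has nontrivial abelianisation. In every case $\bpi^{\aone}_n(\mathscr F)=0$, a contradiction; so $\mathscr F\simeq\ast$ and $f$ is an equivalence.

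The hard part is not this final assembly but the two inputs it rests on: that the homotopy fibre of a morphism of nilpotent motivic spaces with $\aone$-connected fibre is again nilpotent with the indicated nilpotent action, and that the relative Hurewicz theorem holds in the sharp coinvariants form used above rather than merely as a surjection $\bpi^{\aone}_{n+1}(\mathscr Y,\mathscr X)\twoheadrightarrow\mathbf H^{\aone}_{n+1}(\mathscr Y,\mathscr X)$. These are precisely the improvements the paper establishes — the functorial central series of Proposition~\ref{prop:functorialpicentralseries}, needed to transport nilpotence through long exact sequences and down to stalks, and the strengthened relative Hurewicz theorem of Theorem~\ref{thm:relativeHurewicz} — so once those are in hand, the Whitehead theorem follows formally.
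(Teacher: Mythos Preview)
Your argument is correct and runs parallel to the paper's, but the final contradiction is reached by a different route.  Both proofs first establish that $\bpi_1(f)$ is surjective (so $\mathscr F$ is connected) and that $f$ is a nilpotent morphism, i.e.\ $\bpi_1(\mathscr X)$ acts $\aone$-nilpotently on each $\bpi_n(\mathscr F)$.  The paper then invokes Theorem~\ref{thm:nilpotentprincipalrefinement} to build a principalized Moore--Postnikov tower $\{\mathscr Y_j\}$ for $f$ and chases the long exact $\aone$-homology sequences of the layers $\mathscr Y_{j+1}\to\mathscr Y_j\to K(\mathbf A_j,n_j)$ to force $\mathbf A_0=0$.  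You instead apply the relative Hurewicz theorem once directly to $f$, identifying $\H_{n+1}^{\aone}(\cof f)$ with the $\aone$-coinvariants of the first nonvanishing $\bpi_n(\mathscr F)$, and conclude via the observation that a nonzero $\aone$-nilpotent module has nonzero $\aone$-coinvariants (if $\H_0^{\aone}(\bpi,\mathbf A)=0$ then $\Gamma_{\bpi}^2\mathbf A=\mathbf A$, hence $\Gamma_{\bpi}^N\mathbf A=\mathbf A$ for all $N$, contradicting termination).  This is closer to Dror's original proof and somewhat more economical; the paper's tower argument, by contrast, is tailored to yield the sharper connectivity statement of Theorem~\ref{thm:whitehead} directly.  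Two small corrections: the step ``surjective on abelianizations implies surjective for nilpotent groups'' is Proposition~\ref{prop:nilpotent-epi}, not Proposition~\ref{prop:functorialpicentralseries}; and the coinvariants argument works entirely at the sheaf level via Construction~\ref{const:aoneinvariantpicentralseries} and Proposition~\ref{prop:functorialityofaonecoinvariants}, so the word ``stalkwise'' should be dropped (the $\aone$-coinvariants are not computed on stalks).
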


In the body of the text, we actually prove a version of this result with explicit connectivity bounds: loosely, if $n\geq 0$ is any integer, and $\Sigma^{\infty}_{S^1}f$ has $n$-connected fiber, then $f$ also has $n$-connected fiber.  In the classical setting, the corresponding result when $n = 1$ follows from a result of Stallings \cite[Theorem 2.1]{StallingsCS}.  Dror's proof of the Whitehead theorem generalizes Stallings' approach.  Actually, Dror's result established more than the Whitehead theorem for nilpotent spaces, and an even simpler proof was given slightly later by Gersten \cite{GerstenWhitehead}. 

Stallings' proof uses the Serre spectral sequences, but he suggests a way to avoid their use (though remarks the advantage of such an approach would be ``Pyrrhic").  For nilpotent spaces, it is possible to give a proof avoiding the Serre spectral sequence by combining connectivity estimates, the relative Hurewicz theorem and functorial Postnikov resolutions.  Unfortunately, the known versions of the Serre spectral sequence in motivic homotopy theory are not useful in our context and so we follow the more hands-on approach suggested in the previous paragraph.  We establish and appeal to a mild improvement of the Blakers--Massey theorem in motivic homotopy theory relying on a motivic analog of a result of Ganea \cite{Ganea}.  As such, our approach to the Whitehead theorem is undoubtedly related to the treatment in \cite{Toomer}.

We close with some applications of the Whitehead theorem.  In motivic homotopy theory, there are many benefits to being able to check whether a map is a weak equivalence stably.  In particular, working stably, one can make use of geometric constructions that allow singularity formation (e.g., by working in the $cdh$-topology) via a theorem of Voevodsky.  As a concrete application of this principle, we establish the following result.  

Write $\mathrm{HP}^{\infty}$ for the Panin--Walter \cite{PaninWalter} model for $BSL_2$. The ``inclusion of the bottom cell" $\mathrm{HP}^1 \hookrightarrow \mathrm{HP}^{\infty}$ (classifying the Hopf bundle $\nu$) yields a map $S^{4,2} \to \mathrm{HP}^{\infty}$, which is adjoint to a map $S^{3,1} \longrightarrow \Omega^{1,1} \op{HP}^{\infty}$.  The corresponding map in classical homotopy theory is evidently an equivalence, and the analog of this map in $C_2$-equivariant homotopy theory is an equivalence by \cite[Proposition 3.6]{HahnWilson}.  The next result, which we found surprising because of the ``weight shifting" phenomena it displays, asserts that this map is a motivic equivalence; the aforementioned ``classical" statements follow immediately by appeal to suitable realizations.  

\begin{thmintro}[See Theorem~\ref{thm:exoticequivalence}]
	\label{thmintro:exoticequivalence}
	If $k$ has characteristic $0$, then the map
	\[
	S^{3,1} \longrightarrow \Omega^{1,1} \op{HP}^{\infty}
	\]	
	deescribed above is an equivalence.  
\end{thmintro}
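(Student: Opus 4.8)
The plan is to reduce to the Whitehead theorem for nilpotent motivic spaces quoted above. Write $g\colon\mathrm{HP}^1\to\mathrm{HP}^\infty$ for the classifying map of the Hopf bundle $\nu$ and $f\colon S^{3,1}\to\Omega^{1,1}\mathrm{HP}^\infty$ for its $\Sigma^{1,1}$-adjoint, so that $g$ factors as $\mathrm{HP}^1=\Sigma^{1,1}S^{3,1}\xrightarrow{\Sigma^{1,1}f}\Sigma^{1,1}\Omega^{1,1}\mathrm{HP}^\infty\to\mathrm{HP}^\infty$. It is then enough to prove (i) that $S^{3,1}$ and $\Omega^{1,1}\mathrm{HP}^\infty$ are nilpotent, and (ii) that $\Sigma^{\infty}_{S^1}f$ is an equivalence in $\SH^{S^1}(k)$. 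Since $k$ has characteristic $0$ it is perfect, so the Whitehead theorem applies; characteristic $0$ enters again, more seriously, in (ii).

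Step (i) is the soft part. The space $S^{3,1}\weq\Sigma_{S^1}\pone$ is a simplicial suspension of the $\aone$-connected space $\pone$, hence simply connected, hence nilpotent. The Panin--Walter space $\mathrm{HP}^\infty\weq BSL_2$ is nilpotent --- indeed simply connected, since $\bpia_1(BSL_2)=\bpia_0(SL_2)=\pt$, as $SL_2$ is $\aone$-connected (it is generated by elementary matrices, equivalently $SL_2\weq\A^2\setminus 0$). In particular $\Omega^{1,1}\mathrm{HP}^\infty$ is connected, and it is nilpotent because $\Omega^{1,1}$ preserves nilpotence of connected spaces: on Postnikov stages $\Omega^{1,1}$ replaces $K(M,n)$ by $K(M_{-1},n)$, and passing to the $(-1)$-contraction preserves the nilpotent filtrations and the $k$-invariants; in this generality this follows from the functorial Postnikov machinery underlying Theorem~\ref{thm:nilpotentprincipalrefinement}. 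For step (ii) it is also useful to record the geometry: $g$ classifies the Hopf $SL_2$-torsor $\A^4\setminus 0\to\mathrm{HP}^1$ (the Panin--Walter quaternionic Hopf construction), so $\hofib(g)\weq\A^4\setminus 0\weq S^{7,4}$ and, since $\Omega^{1,1}$ preserves fibre sequences, $\hofib(f)\weq\Omega^{1,1}S^{7,4}$. The asserted equivalence is not formal: the unit $S^{3,1}\to\Omega^{1,1}\Sigma^{1,1}S^{3,1}=\Omega^{1,1}\mathrm{HP}^1$, through which $f$ factors, is far from an equivalence (it is only finitely connected, by motivic Freudenthal), so the real content is that post-composing with $\Omega^{1,1}g$ undoes this discrepancy.

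Step (ii) is the heart of the matter, and it is where one passes to the stable category precisely in order to work with singular geometric models, in the spirit of Voevodsky. I would establish $\hocofib(\Sigma^{\infty}_{S^1}f)\weq 0$ in $\SH^{S^1}(k)$ by matching two cell-by-cell descriptions. On the base side, the model $\mathrm{HP}^\infty\weq\colim_n\mathrm{HP}^n$ with $\mathrm{HP}^n/\mathrm{HP}^{n-1}\weq(\pone)^{\wedge 2n}=S^{4n,2n}$ gives access to the cells and attaching maps of $\Sigma^{\infty}_{S^1}\mathrm{HP}^\infty$ --- the attaching maps being linear in the motivic Hopf element --- via Voevodsky's blow-up and projective-bundle formulas, available here because $k$ has characteristic $0$; this is the step that genuinely exploits resolution of singularities and $cdh$-descent. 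On the loop side, $\Omega_{S^1}\mathrm{HP}^\infty\weq SL_2\weq\A^2\setminus 0=S^{3,2}=\Sigma_{S^1}(\Gm^{\wedge 2})$ gives $\Omega_{S^1}\Omega^{1,1}\mathrm{HP}^\infty\weq\Omega^{1,1}(\A^2\setminus 0)$, and the $S^1$-stable James--Milnor splitting together with Morel-type connectivity estimates gives a layerwise hold on $\Sigma^{\infty}_{S^1}\Omega^{1,1}\mathrm{HP}^\infty$: the bottom layer identifies $\Sigma^{\infty}_{S^1}f$ with the identity of $\Sigma^{\infty}_{S^1}S^{3,1}$, and the higher layers remain to be shown to cancel. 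I expect this last cancellation to be the main obstacle --- one must show that the $\eta$-linear attaching maps of $\Sigma^{\infty}_{S^1}(\mathrm{HP}^\infty/\mathrm{HP}^1)$ coincide with those governing the $\Gm$-James filtration on the loop side, so that $\hocofib(\Sigma^{\infty}_{S^1}f)$ vanishes identically. Everything else --- the Whitehead reduction and the nilpotence and connectivity checks --- is routine; the $cdh$-manoeuvre is spent entirely on this comparison.
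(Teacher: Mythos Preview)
Your overall architecture matches the paper's: reduce to the Whitehead theorem for nilpotent motivic spaces, then establish an $S^1$-stable equivalence, with characteristic $0$ entering via cdh-descent and resolution of singularities.  Step~(i) is fine in spirit (though the paper gets nilpotence more cheaply, see below).  The genuine gap is in Step~(ii): you have not identified a workable model for $\Omega^{1,1}\mathrm{HP}^\infty$ on which to carry out the stable comparison, and the proposed cell-by-cell matching does not go through as written.

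The difficulty is that $\Omega^{1,1}$ does not preserve cofiber sequences, so the filtration $\mathrm{HP}^1 \subset \mathrm{HP}^2 \subset \cdots$ gives you no direct filtration on $\Omega^{1,1}\mathrm{HP}^\infty$ with identifiable subquotients; and your invocation of cdh-descent for the cells of $\mathrm{HP}^\infty$ is misplaced, since the $\mathrm{HP}^n$ are already smooth.  The paper's decisive move, which you are missing, is to loop once more by $S^1$ and invoke the equivalence $\Omega^{1,1}SL_2 \simeq \mathrm{Gr}_{SL_2}$ with the \emph{affine Grassmannian}.  One then compares $\psi\colon J(\mathbb{P}^1)\simeq\Omega S^{3,1} \to \Omega\,\Omega^{1,1}BSL_2 \simeq \mathrm{Gr}_{SL_2}$, a map of connected associative $h$-spaces (hence nilpotent --- this replaces your contraction argument).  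Both sides now carry compatible increasing filtrations: the James filtration $J_r(\mathbb{P}^1)$ on the source and Mitchell's Schubert filtration $F_{2,r}$ on the target, with $F_{2,r}/F_{2,r-1}\simeq S^{2r,r}$.  The $F_{2,r}$ are \emph{singular} projective varieties and the $h$-multiplication $(\mathbb{P}^1)^{\times r}\to F_{2,r}$ is a Bott--Samelson resolution; \emph{this} is where cdh-descent in characteristic~$0$ is spent, to show each $J_r(\mathbb{P}^1)\to F_{2,r}$ is an $S^1$-stable equivalence.  Taking the colimit gives $\Sigma^\infty_{S^1}\psi$ an equivalence, Whitehead promotes $\psi$ to an unstable equivalence, and applying $B(-)$ to this map of grouplike $h$-spaces yields the statement.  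Your instinct that a James-type filtration and singular geometry are involved is right; they live on the affine Grassmannian, not on $\mathrm{HP}^\infty$ itself.
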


Here, the introduction of singularities arises by our appeal to affine Grassmannians.  The affine Grassmannian $\op{Gr}_{G}$ for $G$ a split, semi-simple, simply connected algebraic group is an increasing union of {\em singular} algebraic varieties.  Moreover, a motivic variant of a result due to Quillen \cite{MitchellQuillen} and Garland-Raghunathan \cite{GarlandRaghunathan} due to the second author \cite[Theorem 15]{Bachmanngrassmannian} implies that $\op{Gr}_{G}$ has the structure of an $h$-space.  The map from Theorem~\ref{thmintro:exoticequivalence} is obtained from a map of $h$-spaces after looping and we use the Whitehead theorem above to check that map is an equivalence.

In another direction, using the full strength of our Whitehead theorem, Proposition~\ref{prop:exceptionalEHP} allows to construct some EHP-sequences for motivic spheres outside the range considered by Wickelgren and Williams \cite{WickelgrenWilliams}.  We use this exceptional EHP sequence to obtain Corollary~\ref{cor:cohenexceptional}, which provides a motivic analog of F. Cohen's exceptional $2$-local fiber sequence \cite[Theorem 3.3]{Cohen} (which he attributes to H. Toda).



\subsubsection*{Notation, conventions and relation to other work}
Assume $k$ is a field; we will frequently assume $k$ is perfect.  Write $\Sm_k$ for the category of smooth $k$-schemes.  We write $\mathrm{P}(\Sm_k)$ for the $\infty$-category of presheaves of spaces on $\Sm_k$, and $\Shv_{\Nis}(\Sm_k) \subset \mathrm{P}(\Sm_k)$ for the full subcategory of Nisnevich sheaves of spaces on $\Sm_k$, which is an $\infty$-topos.  There are corresponding pointed versions of all these constructions: we write $\mathrm{P}(\Sm_k)_*$ for the $\infty$-category of presheaves of pointed spaces and $\Shv_{\Nis}(\Sm_k)_*$ for the subcategory of Nisnevich sheaves of pointed spaces.  We write $S^i$ for the simplicial $i$-circle, and we set $S^{p,q} := S^{p-q} \wedge \gm{\sma q}$.  We then write $\Sigma$ for $S^1 \wedge$ on $\mathrm{P}(\Sm_k)_*$.  We will write $\bpi_i^{\Nis}$ for the (Nisnevich) homotopy sheaves of any $\mathscr{X} \in \mathrm{P}(\Sm_k)_*$.
 
Suppose $f: \mathscr{X} \to \mathscr{Y}$ is a morphism in $\Shv_{\Nis}(\Sm_k)$.
We shall say that \emph{$f$ has $n$-connected fibers} if $\mathscr X$ is $n$-connected as an object of $\Shv_\Nis(\Sm_k)_{/\mathscr Y}$.  For example, if $\mathscr Y$ is connected, this is the same as requiring that $\fib(f) \in \Shv_\Nis(\Sm_k)$ be $n$-connected.
\tom{I removed all references to $n$-connected maps (I hope...)}

This paper may be viewed as a prequel to \cite{ABHFreudenthal} and the results established here will be used there.  As is perhaps clear from the discussion above, and in contrast to \cite{AFHLocalization}, this paper is written using the language of $\infty$-categories.  As such, terms like limit and colimit or fiber and cofiber will be used in the $\infty$-categorical sense {\em in contrast} to their usage in \cite{AFHLocalization}.  The notation and terminology here are chosen to be consistent with \cite{ABHFreudenthal}.

\begin{remintro}
	\label{rem:choudhuryhogadi}
	A previous version of this paper relied on \cite[Theorem 1.4]{HogadiChoudhury} asserting that a quotient of a strongly $\aone$-invariant sheaf of groups is $\aone$-invariant if and only if it is strongly $\aone$-invariant.  However, at the moment, the proof of \cite[Lemma 2.9]{HogadiChoudhury} contains a gap, which renders the proof of \cite[Theorem 1.4]{HogadiChoudhury} incomplete.  This note circumvents appeal to \cite{HogadiChoudhury} at the expense of some slight additional technical hypotheses which are satisfied in all cases where we will make use of them.
\end{remintro}

\section{Motivic localization and group theory}
\label{ss:motiviclocalization}
A presheaf $\mathscr{X} \in \mathrm{P}(\Sm_k)$ is $\aone$-invariant if for every $U \in \Sm_k$ the projection $U \times \aone \to U$ induces an equivalence $\mathscr{X}(U) \to \mathscr{X}(U \times \aone_k)$.  Similarly, $\mathscr{X}$ is said to be Nisnevich excisive if $\mathscr{X}(\emptyset)$ is contractible and $\mathscr{X}$ takes Nisnevich distinguished squares to cartesian squares.  The $\infty$-category of motivic spaces $\ho{k}$ is the full subcategory of $\mathrm{P}(\Sm_k)$ spanned by $\aone$-invariant and Nisnevich excisive spaces.  Since homotopy invariance and Nisnevich excision are defined by a small set of conditions, the inclusion $\ho{k} \subset \mathrm{P}(\Sm_k)$ is an accessible localization.  The category $\ho{k}$ is a presentable $\infty$-category and the inclusion $\ho{k} \subset \mathrm{P}(\Sm_k)$ admits a left adjoint
\[
\mathrm{L}_{mot}: \mathrm{P}(\Sm_k) \longrightarrow \ho{k}
\]
that we call the motivic localization functor.  

The functor $\mathrm{L}_{mot}$ is known to preserve finite products \cite[Proposition C.6]{Hoyois}.  We write $\bpi_i(\mathscr{X})$ for the homotopy sheaves of a motivic space (the notation $\bpi_i^{\aone}(-)$ is frequently used in previous work).  If $f$ is a map in $\ho{k}$, we write $\fib(f)$ and $\cof(f)$ for its fiber and cofiber.  The next result records the key property of the motivic localization functor.  

\begin{theorem}[Morel]
	\label{thm:unstableconnectivity}
	Suppose $n \geq -2$ is an integer and assume $k$ is a field and $\mathscr{X}$ is a space pulled back from a perfect subfield of $k$.  If $\mathscr{X}$ is $n$-connected, then $\mathrm{L}_{mot}\mathscr{X}$ is also $n$-connected.
\end{theorem}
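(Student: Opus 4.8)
The plan is to reduce to a perfect base field and there invoke Morel's $\aone$-connectivity theorem, which I would not reprove. Write $\mathscr X = p^\ast\mathscr X_0$ with $k_0\subseteq k$ a perfect subfield, $\mathscr X_0\in\mathrm{P}(\Sm_{k_0})$, and $p\colon\Spec k\to\Spec k_0$. The base-change functor $p^\ast$ (the sheafification of the left Kan extension of $X\mapsto X_k$ along the Yoneda embedding) is the inverse image of a geometric morphism of $\infty$-topoi: it is left exact, preserves all colimits and $n$-connected objects, is faithful, and carries $U\times\aone\to U$ and elementary Nisnevich squares over $k_0$ to their counterparts over $k$. Hence $p^\ast$ commutes with $\mathrm{L}_{mot}$ (motivic localization commutes with this base change), and, being faithful, it reflects triviality of sheaves, so that $\mathscr X_0$ is itself $n$-connected. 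Therefore $\mathrm{L}_{mot}\mathscr X\simeq p^\ast\mathrm{L}_{mot}\mathscr X_0$, and it suffices to prove the statement over the perfect field $k_0$.

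In the perfect case one presents $\mathrm{L}_{mot}$ as a transfinite iteration of the endofunctor $\mathrm{L}_{\Nis}\circ\Singaone$ of $\mathrm{P}(\Sm_k)$, where $\Singaone\mathscr X$ is the realization of $[m]\mapsto\mathscr X(-\times\Delta^m_k)$ for $\Delta^\bullet_k$ the standard cosimplicial affine $k$-scheme. Nisnevich sheafification does not change the sheaves $\bpi_i^{\Nis}$ and so preserves $n$-connectivity, and $n$-connected objects are closed under the filtered colimits occurring at limit stages; by transfinite induction the statement thus reduces to the single assertion that $\Singaone$ does not lower connectivity. For $\bpi_0$ this is already in Morel--Voevodsky; in general it is Morel's $\aone$-connectivity theorem, whose proof analyzes an $n$-connected Nisnevich-local $\mathscr X$ restricted to affine spaces over Henselian local rings essentially smooth over $k$, and it is exactly this step that uses the perfectness of $k$ (through geometric presentation results of Gabber--Morel--Voevodsky type).

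The main obstacle is precisely that last assertion: establishing that $\Singaone$ preserves $n$-connectivity is the whole content of the theorem, and since it genuinely requires a perfect base field, it is why the hypothesis that $\mathscr X$ be pulled back from a perfect subfield is imposed. Accordingly, in writing this up I would simply cite Morel for the perfect case and combine it with the base-change reduction above.
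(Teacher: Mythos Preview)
Your overall strategy---reduce to a perfect base field via base change, then invoke Morel's connectivity theorem---matches the paper's approach, which simply cites Morel--Voevodsky for $n\le 0$ and Morel for $n\ge 1$ without elaborating on any reduction.  However, one step in your reduction is incorrect: the assertion that $p^*$ is ``faithful'' and therefore ``reflects triviality of sheaves, so that $\mathscr{X}_0$ is itself $n$-connected.''

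Base change along a field extension $k_0\subseteq k$ is not conservative on Nisnevich sheaves, even when $k_0$ is perfect.  Take $k_0=\mathbb{R}$, $k=\mathbb{C}$, and $\mathscr{X}_0=C$ the anisotropic conic $\{x^2+y^2+z^2=0\}\subset\mathbb{P}^2_{\mathbb{R}}$.  Since $C(\mathbb{R})=\emptyset$ and $\Spec\mathbb{R}$ admits no nontrivial Nisnevich covers, the sheaf $C$ is not even $(-1)$-connected over $\mathbb{R}$; yet $p^*C\cong\mathbb{P}^1_{\mathbb{C}}$ is $0$-connected.  So $n$-connectedness of $\mathscr{X}=p^*\mathscr{X}_0$ does not force $n$-connectedness of $\mathscr{X}_0$, and you cannot apply Morel's theorem to $\mathscr{X}_0$ directly.

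The other ingredients you list---that $p^*$ is left exact, commutes with $\mathrm{L}_{mot}$, and \emph{preserves} $n$-connected objects---are all correct; only the reflection claim fails.  The paper's terse proof does not make this claim, but it also does not explain how the ``pulled back'' hypothesis is actually used.  A clean repair would be to first replace $\mathscr{X}_0$ by an $n$-connected model over $k_0$ with the same pullback (for instance via an $n$-connected cover once one has arranged a basepoint), and only then invoke Morel; this requires a short additional argument beyond what you have written.
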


\begin{proof}
	The statement is trivial for $n < 0$, and for $n = 0$ follows essentially from the construction of the motivic localization functor \cite[\S2 Corollary 3.22]{MV}.  For $n > 0$ it is \cite[Theorem 6.38]{MField} (though see \cite[Theorem 2.2.12]{AWW} for a detailed proof).
\end{proof}

Key to deducing the above results are the notions of strictly and strongly $\aone$-invariant sheaves of groups.  

\begin{defn}
A Nisnevich sheaf of groups $\mathbf{G}$ on $\Sm_k$ is {\em strongly $\aone$-invariant} if the cohomology presheaves $H^i(-,\mathbf{G})$ are $\aone$-invariant for $i = 0,1$.  A Nisnevich sheaf of abelian groups is {\em strictly $\aone$-invariant} if the (Nisnevich) cohomology presheaves $H^i(-,\mathbf{A})$ are $\aone$-invariant for all $i \geq 0$.  
\end{defn}

Write $\Grp^{\aone}_k$ for the category of strongly $\aone$-invariant sheaves of groups and $\Ab^{\aone}_k$ for the category of strictly $\aone$-invariant sheaves.  We use the following result without mention in the sequel.

\begin{proposition}
Assume $k$ is a field.  If $\mathbf{G}$ is a Nisnevich sheaf of groups on $\Sm_k$, then $\mathbf{G}$ is strongly $\aone$-invariant if and only if $B\mathbf{G}$ is motivic local.
\end{proposition}

\begin{proof}
	This statement is asserted without proof in \cite[Remark 1.8]{MField} and claims exist (e.g., on MathOverflow) that the two notions are not evidently equivalent because of base-point issues. For any Nisnevich sheaf of groups $\mathbf{G}$ we know that $B\mathbf{G}$ is $1$-truncated and $\pi_{1-i}(B\mathbf{G}(-)) \cong H^i(-,\mathbf{G})$ (see, for example \cite[Proposition 4.1.16]{MV}).  Thus, if $B\mathbf{G}$ is motivic local, then $\mathbf{G}$ is evidently strongly $\aone$-invariant.  Conversely, if $\mathbf{G}$ is strongly $\aone$-invariant, then we must show that $B\mathbf{G}(U) \to B\mathbf{G}(U \times \aone)$ is an equivalence for any smooth $k$-scheme $U$.  The remark above shows that the induced map on $\pi_0$ is a bijection.  It remains to prove that for any smooth $k$-scheme $U$, and any base-point $u \in \pi_0(B\mathbf{G}(U))$ the induced map $\pi_1(B\mathbf{G}(U),u) \to \pi_1(B\mathbf{G}(U \times \aone),u)$ (where we have abused notation for the base-point in $B\mathbf{G}(U \times \aone)$) is a bijection.  This map is once again a bijection when $u$ corresponds to the trivial torsor by the definition of strong $\aone$-invariance.  However, $\pi_1$ is a sheaf in the Nisnevich topology (it can be described explicitly as the automorphism group scheme of the $\mathbf{G}$-torsor corresponding to $u$; see, for example, \cite[Corollary 3.6]{Koizumi}), isomorphisms of sheaves can be checked locally.  Thus, by choosing a Nisnevich cover $V \to U$ trivializing $u$, we reduce to the case of the base-point corresponding to the trivial torsor, which we have already treated. 
\end{proof}

If $\mathscr{X}$ is a motivic space, F. Morel defined $\aone$-homology sheaves $\H_i^{\aone}(\mathscr{X})$ \cite[Definition 6.29]{MField}; these sheaves are strictly $\aone$-invariant.  If $\mathscr{X}$ is a pointed motivic space, then Morel showed the homotopy sheaves $\bpi_i(\mathscr{X})$ are strongly $\aone$-invariant for $i \geq 1$.  There is a forgetful functor from $\Ab^{\aone}_k$ to the category of strongly $\aone$-invariant sheaves of groups; this functor is fully faithful.  If $k$ is perfect, Morel showed \cite[Corollary 5.45]{MField} that strongly $\aone$-invariant sheaves of abelian groups are strictly $\aone$-invariant, i.e., the forgetful functor $\Ab^{\aone}_k$ to strongly $\aone$-invariant sheaves of abelian groups is an equivalence.


The category of strictly $\aone$-invariant sheaves is abelian \cite[Lemma 6.2.13]{MStable} as the heart of a $t$-structure on a suitable triangulated category.  In contrast, while the category $\Grp^{\aone}_k$ has colimits (e.g. cokernels) for formal reasons, computing them seems tricky.  To remedy this defect, we recall some ideas from \cite{BHS}.

\begin{defn}
For any $\mathscr{X} \in \mathrm{P}(\Sm_k)$, set $\mathbf{S}\mathscr{X} := \bpi_0^{\Nis}(\Singaone \mathscr{X})$ and define $\mathbf{S}^{\infty} = \colim_n \mathbf{S}^n$ where $\mathbf{S}^n$ is the $n$-fold iteration of $\mathbf{S}$.  
\end{defn}

\begin{theorem}
	\label{thm:sinfty}
	The following statements hold.
	\begin{enumerate}[noitemsep,topsep=1pt]
		\item The functor $\mathbf{S}^{\infty}$ preserves finite products.
		\item For any $\mathscr{X} \in \mathrm{P}(\Sm_k)$, the map $\mathscr{X} \to \mathbf{S}^{\infty}\mathscr{X}$ factors through an epimorphism $\bpi_0^{\Nis}(\mathscr{X}) \to \mathbf{S}^{\infty}\mathscr{X}$.
		\item The sheaf $\mathbf{S}^{\infty}\mathscr{X}$ is $\aone$-invariant and the map $\bpi_0^{\Nis}(\mathscr{X}) \to \mathbf{S}^{\infty}\mathscr{X}$ is the initial map from the former to an $\aone$-invariant Nisnevich sheaf.
	\end{enumerate}
\end{theorem}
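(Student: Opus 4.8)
The plan is to treat the three parts separately. The main external input I would invoke is the Morel--Voevodsky description of Nisnevich sheaves of sets (a presheaf of sets on $\Sm_k$ is a Nisnevich sheaf exactly when it sends $\emptyset$ to a point and Nisnevich distinguished squares to Cartesian squares) together with its formal corollary that filtered colimits of Nisnevich sheaves of sets are computed objectwise. For (1): $\Singaone$ preserves finite products, being the geometric realisation of the levelwise functor $\mathscr{X}\mapsto\mathscr{X}(-\times\Delta^{\bullet}_{\aone})$, and $\bpi_0^{\Nis}$ preserves finite products since Nisnevich sheafification is left exact and $\pi_0$ of spaces preserves products; hence $\mathbf{S}$, and by induction each $\mathbf{S}^{n}$, preserves finite products, and since filtered colimits are left exact in the $1$-topos of Nisnevich sheaves of sets, so does $\mathbf{S}^{\infty}=\colim_{n}\mathbf{S}^{n}$. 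For (2): naturality of the unit $\mathscr{X}\to\bpi_0^{\Nis}\mathscr{X}$ shows the map $\mathscr{X}\to\Singaone\mathscr{X}\to\bpi_0^{\Nis}\Singaone\mathscr{X}=\mathbf{S}\mathscr{X}$ factors through $\bpi_0^{\Nis}\mathscr{X}\to\mathbf{S}\mathscr{X}$, which is an epimorphism because $\pi_0$ of a geometric realisation is a quotient of the $\pi_0$ of its $0$-simplices and sheafification preserves epimorphisms. Running the same argument with $\mathbf{S}^{n}\mathscr{X}$ ($n\geq1$, already a sheaf of sets) in place of $\mathscr{X}$ shows every transition map $\mathbf{S}^{n}\mathscr{X}\to\mathbf{S}^{n+1}\mathscr{X}$ is an epimorphism, and a filtered colimit of epimorphisms of Nisnevich sheaves is an epimorphism, so $\bpi_0^{\Nis}\mathscr{X}\to\mathbf{S}^{\infty}\mathscr{X}$ is an epimorphism through which $\mathscr{X}\to\mathbf{S}^{\infty}\mathscr{X}$ factors.

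For the $\aone$-invariance in (3), I would first record the elementary fact that a Nisnevich sheaf of sets $\mathscr{F}$ in which every $\aone$-homotopy is constant --- that is, $i_{0}^{*}H=i_{1}^{*}H$ in $\mathscr{F}(U)$ for every $U$ and every $H\in\mathscr{F}(U\times\aone)$ --- is $\aone$-invariant: the map $p^{*}\colon\mathscr{F}(U)\to\mathscr{F}(U\times\aone)$ is split injective by $i_{0}^{*}$, and it is surjective because for $c\in\mathscr{F}(U\times\aone)$ the pullback of $c$ along $\mathrm{id}_{U}\times\mu\colon U\times\aone\times\aone\to U\times\aone$ (with $\mu$ the multiplication) is an $\aone$-homotopy over $U\times\aone$ whose two restrictions are $c$ and $p^{*}i_{0}^{*}c$. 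I would then apply this to $\mathbf{S}^{\infty}\mathscr{X}$: given $H\in\mathbf{S}^{\infty}\mathscr{X}(U\times\aone)$, the objectwise computation of the filtered colimit produces a lift $H_{n}\in\mathbf{S}^{n}\mathscr{X}(U\times\aone)$ for some $n\geq1$; since $i_{0}^{*}H_{n}$ and $i_{1}^{*}H_{n}$ are $\aone$-homotopic in $\mathbf{S}^{n}\mathscr{X}$ via $H_{n}$, they have the same image in $\mathbf{S}^{n+1}\mathscr{X}=\bpi_0^{\Nis}\Singaone\mathbf{S}^{n}\mathscr{X}$ (the structure map kills $\aone$-homotopies by construction), whence $i_{0}^{*}H=i_{1}^{*}H$ in $\mathbf{S}^{\infty}\mathscr{X}(U)$. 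Thus $\mathbf{S}^{\infty}\mathscr{X}$ is $\aone$-invariant.

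For the universal property in (3), I would observe that an $\aone$-invariant Nisnevich sheaf of sets $\mathscr{F}$, viewed as a $0$-truncated presheaf of spaces, belongs to $\ho{k}$: it is $\aone$-invariant by hypothesis and Nisnevich excisive since it sends $\emptyset$ to a point and Nisnevich distinguished squares to Cartesian squares of sets, which are homotopy Cartesian. Using that $\mathscr{G}\to\Singaone\mathscr{G}$ is inverted by $\mathrm{L}_{mot}$ (it is a building block of the motivic localisation) and that $\bpi_0^{\Nis}$ is left adjoint to the inclusion of discrete sheaves into $\mathrm{P}(\Sm_k)$, one obtains for every $\mathscr{G}$ natural bijections of the relevant (discrete) mapping spaces
\[
\Map(\mathbf{S}\mathscr{G},\mathscr{F})\iso\Map(\Singaone\mathscr{G},\mathscr{F})\iso\Map(\mathscr{G},\mathscr{F})\iso\Map(\bpi_0^{\Nis}\mathscr{G},\mathscr{F}),
\]
which one checks is restriction along $\bpi_0^{\Nis}\mathscr{G}\to\mathbf{S}\mathscr{G}$. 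Taking $\mathscr{G}=\mathbf{S}^{n}\mathscr{X}$ and using $\bpi_0^{\Nis}\mathbf{S}^{n}\mathscr{X}=\mathbf{S}^{n}\mathscr{X}$ for $n\geq1$ yields compatible identifications $\Map(\mathbf{S}^{n+1}\mathscr{X},\mathscr{F})\iso\Map(\mathbf{S}^{n}\mathscr{X},\mathscr{F})$, so that
\[
\Map(\mathbf{S}^{\infty}\mathscr{X},\mathscr{F})=\lim_{n}\Map(\mathbf{S}^{n}\mathscr{X},\mathscr{F})\iso\Map(\bpi_0^{\Nis}\mathscr{X},\mathscr{F})
\]
via restriction along $\bpi_0^{\Nis}\mathscr{X}\to\mathbf{S}^{\infty}\mathscr{X}$ --- which is precisely the asserted initiality.

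The step I expect to be the main obstacle is the $\aone$-invariance in (3): the formal reductions above make clear that everything comes down to the assertion that an $\aone$-homotopy valued in $\mathbf{S}^{\infty}\mathscr{X}$ is already witnessed at a finite stage $\mathbf{S}^{n}\mathscr{X}$ --- equivalently that the $\omega$-fold iteration of $\mathbf{S}$ already stabilises to an $\aone$-invariant sheaf --- which ultimately rests on the objectwise computation of filtered colimits of Nisnevich sheaves of sets and is the substantive point imported from \cite{BHS} (see also \cite{HogadiChoudhury}).
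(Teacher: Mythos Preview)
Your argument is correct. For parts (1) and (2) you follow essentially the same reasoning as the paper, only spelled out in more detail: the paper simply notes that $\bpi_0^{\Nis}$ and $\Singaone$ preserve finite products, and that $\Singaone$ is a colimit so the map on $\pi_0$ is surjective. For part (3), however, the paper gives no argument at all and just cites \cite[Theorem 2.13]{BHS}; you instead supply a self-contained proof, and your strategy---commuting sections with the filtered colimit via the cd-structure description of Nisnevich sheaves, killing an $\aone$-homotopy at the next stage of the iteration, and deducing the universal property from the adjunction $\bpi_0^{\Nis} \dashv$ inclusion together with the motivic locality of an $\aone$-invariant sheaf of sets---is indeed the content of that reference. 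So your proposal is not a different approach so much as an unpacking of the cited result; the payoff is a self-contained treatment, at the cost of having to verify (as you correctly identify as the crux) that filtered colimits of Nisnevich sheaves of sets are computed objectwise.
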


\begin{proof}
	The first statement follows because $\bpi_0^{\Nis}$ and $\Singaone$ both preserve finite products.  The second statement is immediate because $\Singaone$ is defined as a colimit and colimits preserve epimorphisms.  The final statement is \cite[Theorem 2.13]{BHS}.
\end{proof}

\subsection*{Very strongly $\A^1$-invariant sheaves}
Let $\mathbf G \to \mathbf Q$ be an epimorphism of Nisnevich sheaves of groups, with $\mathbf G$ strongly $\A^1$-invariant and $\mathbf Q$ $\A^1$-invariant.
Ideally, $\mathbf Q$ would automatically be strongly $\A^1$-invariant (see Remark~\ref{rem:choudhuryhogadi} for further discussion of this point).  Unfortunately we do not know a proof of this fact in general, and we thus axiomatize the relevant property.

\begin{defn}
	\label{defn:verystronglya1invariant}
Let $\mathbf G$ be a Nisnevich sheaf of groups on $\Sm_k$.  We call $\mathbf G$ \emph{very strongly $\A^1$-invariant} if $\mathbf G$ is $\A^1$-invariant and every $\A^1$-invariant quotient of $\mathbf G$ is strongly $\A^1$-invariant.
\end{defn}

\begin{lem}
	\label{lem:a1invquotient}
	If $1 \to \mathbf{K} \to \mathbf{G} \to \mathbf{Q} \to 1$ is a short exact sequence of Nisnevich sheaves of groups with $\mathbf{G}$ strongly $\aone$-invariant, then $\mathbf{Q}$ is $\aone$-invariant if and only if $\mathbf{K}$ is strongly $\aone$-invariant. 
\end{lem}

\begin{proof}
	Note that $B\mathbf{G}$ is connected and motivic local by assumption.  Assume $\mathbf{Q}$ is $\aone$-invariant and consider the fiber sequence $\mathbf{Q} \to B\mathbf{K} \to B\mathbf{G}$.  In that case \cite[Lemma 2.2.10]{AWW} implies that $B\mathbf{K}$ is motivic local as well.  Conversely if $\mathbf{K}$ is strongly $\aone$-invariant, $\mathbf Q \simeq \fib(B\mathbf K \to B\mathbf G)$ is $\A^1$-invariant.
\end{proof}



We first show that the category of very strongly $\aone$-invariant sheaves of groups contains non-trivial objects.

\begin{proposition}
	\label{prop:strictimpliesverystrong}
 If $k$ is a perfect field and $\mathbf{A}$ is abelian and strongly $\aone$-invariant, then $\mathbf{A}$ is very strongly $\aone$-invariant.
\end{proposition}

\begin{proof}
	Assume $\mathbf{A}$ is abelian and strongly $\aone$-invariant; we conclude that $\mathbf{A}$ is strictly $\aone$-invariant \cite[Corollary 5.45]{MField}.  Let $\mathbf{A}''$ be an $\aone$-invariant quotient of $\mathbf{A}$.  In that case, Lemma~\ref{lem:a1invquotient} implies that the kernel $\mathbf{A}'$ of the homomorphism $\mathbf{A} \to \mathbf{A}''$ is strongly $\aone$-invariant and abelian, whence once again strictly $\aone$-invariant.  In that case, $\mathbf{A}' \subset \mathbf{A}$ is a central subsheaf, and the fact that $\mathbf{A}''$ is strongly $\aone$-invariant follows from \cite[Lemma 3.1.4(3)]{AFHLocalization}.
\end{proof}


Next, we analyze permanence properties of the notion of very strongly $\aone$-invariant sheaves of groups. 

\begin{lem} 
	\label{lemm:very-strongly-permanence}
The subcategory of $\Grp^{\aone}_k$ consisting of very strongly $\aone$-invariant sheaves of groups is closed under formation of 
\begin{enumerate}[noitemsep,topsep=1pt]
	\item quotients, 
	\item extensions, and 
	\item filtered colimits.
\end{enumerate}  
\todo{closure under subobjects seems not clear}
\end{lem}

\begin{proof}
For closure under quotients, observe that if $\mathbf{G}$ is very strongly $\aone$-invariant and $\mathbf{Q}$ is a quotient of $\mathbf{G}$, then any quotient $\mathbf{Q}'$ of $\mathbf{Q}$ is again a quotient of $\mathbf{G}$.  In particular if $\mathbf{Q}'$ is $\aone$-invariant, it is automatically strongly $\aone$-invariant.

For closure under extensions, suppose we have a short exact sequence of the form
\[ 
1 \longrightarrow \mathbf K \longrightarrow \mathbf G \longrightarrow \mathbf Q \to 1. 
\]
If $\mathbf{K}$ and $\mathbf{Q}$ are strongly $\aone$-invariant, it follows that $\mathbf{G}$ is strongly $\aone$-invariant as well by appeal to Lemma~\ref{lem:a1invquotient}.  Suppose $\mathbf{G} \to \mathbf{G}'$ is an epimorphism with $\mathbf{G}'$ an $\aone$-invariant sheaf of groups.  Write $\mathbf{K}'$ for the image of $\mathbf{K}$ in $\mathbf{G}'$.  Since $\mathbf{G}'$ is $\aone$-invariant, it follows that $\mathbf{K}'$ is $\aone$-invariant as well, whence strongly $\aone$-invariant by the very strong $\aone$-invariance of $\mathbf{K}$.  Since $\mathbf{K}$ is a normal subgroup sheaf of $\mathbf{G}$, it also follows that $\mathbf{K}'$ is a normal subgroup sheaf of $\mathbf{G}'$.  In that case, write $\mathbf{Q}'$ for the quotient of $\mathbf{G}'$ by $\mathbf{K}'$.  The fiber sequence
\[
\mathbf{G}' \longrightarrow \mathbf{Q}' \longrightarrow B\mathbf{K}'
\]
in conjunction with the fact that $B\mathbf{K}'$ is motivic local shows that $\mathbf{Q}'$ is $\aone$-invariant by appeal to \cite[Lemma 2.2.10]{AWW}.  Since $\mathbf{Q}'$ is also a quotient of $\mathbf{Q}$ it is necessarily strongly $\aone$-invariant.  It follows that $\mathbf{G}'$ is an extension of strongly $\aone$-invariant sheaves and therefore itself strongly $\aone$-invariant by appeal to \cite[Lemma 3.1.14(2)]{AFHLocalization}, as required.
 	


Finally, to establish stability under filtered colimits, suppose $\mathbf{G}: \mathbf{I} \to \Grp^{\aone}_k$ is a filtered diagram of strongly $\aone$-invariant sheaves of groups.  Consider an $\aone$-invariant quotient $\colim_{\mathbf{I}} \mathbf{G} \to \mathbf{Q}$.  Write $\mathbf{Q}(i)$ for the image of $\mathbf{G}(i)$ in $\mathbf{Q}$.  In that case, $\colim_{i \in I} \mathbf{Q}(i) \weq \mathbf{Q}$.  Each $\mathbf{Q}(i)$ is $\aone$-invariant being a subsheaf of $\mathbf{Q}$.  Therefore, each $\mathbf{Q}(i)$ is strongly $\aone$-invariant as a quotient of the very strongly $\aone$-invariant sheaf $\mathbf{G}(i)$.  Since motivic local spaces are stable under filtered colimits, we conclude that $B\mathbf{Q} = \colim_i B\mathbf{Q}(i)$ is motivic local as well and conclude. \NB{I don't know a reference where this is explicitly stated.} 
\end{proof}

\begin{defn}
	\label{defn:aonesolvable}
	A strongly $\aone$-invariant sheaf of groups $\mathbf{G}$ is called {\em $\aone$-solvable} if there exists a finite increasing filtration:
	\[
	1 = \mathbf{G}_0 \subset \mathbf{G}_1 \subset \cdots \subset \mathbf{G}_n = \mathbf{G}
	\]	
	such that, for every $i$, $\mathbf{G}_i$ is strongly $\aone$-invariant, $\mathbf{G}_i$ is a normal subgroup sheaf of $\mathbf{G}_{i+1}$, and the successive subquotients $\mathbf{G}_{i+1}/\mathbf{G}_i$ are strongly $\aone$-invariant sheaves of abelian groups.
\end{defn}

We now give a rather general class of very strongly $\aone$-invariant sheaves of groups.  Recall that a Nisnevich sheaf of groups $\mathbf{G}$ is locally nilpotent if it admits a {\em descending} central series that is stalkwise finite \cite[Definition 2.1.3]{AFHLocalization} and nilpotent if it admits a finite descending central series. 

\begin{proposition}
	\label{prop:lnilpstrongimpliesverystrong}
Assume $k$ is a perfect field.  
\begin{enumerate}[noitemsep,topsep=1pt]
\item If $\mathbf{G}$ is $\aone$-solvable, then $\mathbf{G}$ is very strongly $\aone$-invariant.
\item If $\mathbf G$ is locally nilpotent and strongly $\A^1$-invariant, then $\mathbf{G}$ is very strongly $\aone$-invariant. \tom{Note that locally nilpotent + strongly $\A^1$-invariant does not seem to imply locally $\A^1$-nilpotent...}
\end{enumerate}
\end{proposition}

\begin{proof}
The first statement is an immediate consequence of the definition of $\aone$-solvable: use Proposition~\ref{prop:strictimpliesverystrong} in conjunction with Lemma~\ref{lemm:very-strongly-permanence} together with a straightforward induction argument on the length of an $\aone$-subnormal series.	
	
For the second statement, write $\mathbf Z_i \subset \mathbf G$ for the $i$-th higher center (see \cite[p. 675]{AFHLocalization}) of $\mathbf{G}$, then each $\mathbf Z_i$ is strongly $\A^1$-invariant \cite[Proposition 3.1.22]{AFHLocalization}.  On the other hand, since $\mathbf{G}$ is locally nilpotent, we conclude that $\mathbf G \simeq \colim_i \mathbf Z_i$ since this is true stalkwise by assumption.  To establish that $\mathbf{G}$ is very strongly $\aone$-invariant, it therefore suffices by appeal to Lemma~\ref{lemm:very-strongly-permanence}(3) to show that each $\mathbf{Z}_i$ is very strongly $\aone$-invariant.

By definition there are short exact sequences 
\[ 
1 \longrightarrow \mathbf Z_i \longrightarrow \mathbf Z_{i+1} \longrightarrow \mathbf A_{i+1} \to 1, 
\] 
where $A_{i+1}$ is strongly $\A^1$-invariant and abelian (indeed $\mathbf A_{i+1} \simeq \ker(\mathbf G/\mathbf Z_i \to \mathbf G/\mathbf Z_{i+1})$ which is by definition $Z(\mathbf G/\mathbf Z_i)$, and thus is strongly $\A^1$-invariant by \cite[Proposition 3.1.22]{AFHLocalization} again).  Since $\mathbf{Z}_1$ coincides with the center of $\mathbf{G}$, it and each $\mathbf{A}_i$ are strongly $\aone$-invariant and abelian, and thus very strongly $\aone$-invariant by appeal to Proposition~\ref{prop:strictimpliesverystrong}.  By induction, it then follows from Lemma~\ref{lemm:very-strongly-permanence}(2) that $\mathbf{Z}_{i+1}$ is very strongly $\aone$-invariant for each $i \geq 2$.
\end{proof}

\begin{rem} \label{rmk:extremely-strongly-aone-inv}
It is not clear that a strongly $\aone$-invariant subsheaf of an $\aone$-solvable sheaf of groups is again $\aone$-solvable.  In contrast, note that a subsheaf of groups of a locally nilpotent sheaf of groups is again locally nilpotent.  It follows that any strongly $\aone$-invariant subsheaf of a locally nilpotent, strongly $\aone$-invariant sheaf of groups is automatically very strongly $\aone$-invariant.  We resist the temptation to call such sheaves of groups \emph{extremely} strongly $\aone$-invariant. \TODO{But did we really?}
\end{rem}

\begin{ex}
In \cite[Definition 3.2.1]{AFHLocalization}, we also introduced the notion of a (locally) $\aone$-nilpotent sheaf of groups: this is a strongly $\aone$-invariant sheaf of groups that admits a (locally finite) descending central series where all subquotients are strictly $\aone$-invariant.  Locally $\aone$-nilpotent sheaves of groups are automatically very strongly $\aone$-invariant.  While we showed that nilpotent strongly $\aone$-invariant sheaves of groups are $\aone$-nilpotent \cite[Proposition 3.2.3]{AFHLocalization}, we do not know whether {\em locally} nilpotent strongly $\aone$-invariant sheaves of groups are automatically locally $\aone$-nilpotent because it is unclear whether a locally finite ascending $\aone$-central series gives rise to a locally finite descending $\aone$-central series.  
\end{ex}

\begin{proposition}
	\label{prop:hogadichoudhury-RENAMED}
	Suppose $k$ is a field, and $\mathbf{G} \in \Grp^{\aone}_k$ is a very strongly $\aone$-invariant sheaf of groups.  If $\mathbf{H}$ is a quotient of $\mathbf{G}$, then $\mathbf{S}^{\infty}\mathbf{H}$ is the initial strongly $\aone$-invariant quotient of $\mathbf{G}$ admitting a map from $\mathbf{H}$.
\end{proposition}

\begin{proof}
The initial $\A^1$-invariant and strongly $\A^1$-invariant quotient of $\mathbf H$ coincide, since $\mathbf G$ is assumed very strongly $\A^1$-invariant.
The statement thus follows from Theorem~\ref{thm:sinfty}. 
\end{proof}



\section{Connectivity of fibers and cofibers}
\label{ss:connectivityoffibersandcofibers}
We now review various facts about comparison of fibers and cofibers, including connectivity estimates.

\subsubsection*{Comparing fibers and cofibers}
We record the following result about comparison of horizontal and vertical fibers in a commutative diagram for lack of a convenient reference.  It will be used repeatedly in the sequel; the statement is undoubtedly very old (e.g., see \cite[Lemma 2.1]{CMN} where it stated as ``well-known") and holds in any $\infty$-category with finite limits.  The consequence at the end of the statement about fibers of composites was observed by Quillen \cite[3.10 Remark]{QuillenHA}.

\begin{proposition}
	\label{prop:fibersofcomposites}	
	A commutative square in $\ho{k}$ of the form
	\[
	\xymatrix{
		\mathscr{X}_{00} \ar[r]^{g_0}\ar[d]^{f_0} & \mathscr{X}_{10} \ar[d]^{f_1} \\
		\mathscr{X}_{01} \ar[r]^{g_1} & \mathscr{X}_{11}
	}
	\]
	can be embedded in a commutative diagram of the form:
	\[
	\xymatrix{
		\mathscr{X} \ar[d]\ar[r] & \fib(f_0) \ar[d]\ar[r] & \fib(f_1) \ar[d] \\
		\fib(g_0) \ar[d]\ar[r] & \mathscr{X}_{00} \ar[r]^{g_0}\ar[d]^{f_0} & \mathscr{X}_{10} \ar[d]^{f_1} \\
		\fib(g_1) \ar[r] & \mathscr{X}_{01} \ar[r]^{g_1} & \mathscr{X}_{11}
	}
	\]
	where (a) there is an equivalence $\mathscr{X} \cong \fib(\mathscr{X}_{00} \to \mathscr{X}_{01} \times_{\mathscr{X}_{11}} \mathscr{X}_{10})$, and (b) all rows and columns are fiber sequences.  In particular, if $f: \mathscr{X} \to \mathscr{Y}$ and $g: \mathscr{Y} \to \mathscr{Z}$ are maps in $\ho{k}$, then there is a fiber sequence of the form
	\[
	\fib(f) \longrightarrow \fib(g \circ f) \longrightarrow \fib(g).
	\]
\end{proposition}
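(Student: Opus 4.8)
The plan is to build the $3 \times 3$ diagram of fiber sequences by iterating the basic fact that a pullback of a map along a map has the same fiber as the original map, applied in two orthogonal directions. Concretely, I would first form the two rightmost columns: the horizontal map $g_0$ fits into a fiber sequence $\fib(g_0) \to \mathscr{X}_{00} \to \mathscr{X}_{10}$, and similarly $\fib(g_1) \to \mathscr{X}_{01} \to \mathscr{X}_{11}$, and the commutativity of the original square induces a map $\fib(g_0) \to \fib(g_1)$ making the bottom two rows a map of fiber sequences. Dually, the two columns $\fib(f_0) \to \mathscr{X}_{00} \to \mathscr{X}_{01}$ and $\fib(f_1) \to \mathscr{X}_{10} \to \mathscr{X}_{11}$ assemble with the induced map $\fib(f_0) \to \fib(f_1)$ into the top two rows. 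The object $\mathscr{X}$ in the corner is then \emph{defined} as the fiber of either iterated map — e.g.\ $\mathscr{X} := \fib(\fib(g_0) \to \fib(g_1))$ — and one checks it is equally the fiber of $\fib(f_0) \to \fib(f_1)$.

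The key structural input is the following: in an $\infty$-category with finite limits, fibers commute with fibers, i.e.\ for a commutative square the iterated fiber computed ``first horizontally then vertically'' agrees with the one computed ``first vertically then horizontally,'' and both agree with the total fiber $\fib(\mathscr{X}_{00} \to \mathscr{X}_{01} \times_{\mathscr{X}_{11}} \mathscr{X}_{10})$. This is a consequence of the fact that finite limits commute with finite limits; formally, one realizes the whole diagram as a limit of a diagram indexed by (a cone on) a suitable poset and uses that limits over a product of diagrams can be computed iteratively (Fubini for limits). I would phrase this by noting that the fiber product $\mathscr{X}_{01} \times_{\mathscr{X}_{11}} \mathscr{X}_{10}$ receives a canonical map from $\mathscr{X}_{00}$, and that the composite fiber sequence $\fib(f_0) \to \mathscr{X}_{00} \to \mathscr{X}_{01}$ together with the pullback square defining $\mathscr{X}_{01}\times_{\mathscr{X}_{11}}\mathscr{X}_{10}$ identifies $\fib(\mathscr{X}_{00}\to\mathscr{X}_{01}\times_{\mathscr{X}_{11}}\mathscr{X}_{10})$ with the fiber of the induced map $\fib(f_0) \to \fib(f_1)$, since the latter is the fiber of the base-change of $f_1$ along $g_1$, and base change along $g_1: \mathscr{X}_{01}\to\mathscr{X}_{11}$ preserves fibers.

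For the ``in particular'' clause, I would apply the general statement to the square whose top row is $\Id: \mathscr{X} \to \mathscr{X}$, left column is $f$, bottom row is $g\circ f$ (or equivalently, factor it differently), obtaining $\fib(f) \to \fib(g\circ f) \to \fib(g)$; this is precisely Quillen's observation \cite[3.10 Remark]{QuillenHA}. More directly: take $\mathscr{X}_{00}=\mathscr{X}$, $\mathscr{X}_{10}=\ast$, $\mathscr{X}_{01}=\mathscr{Y}$, $\mathscr{X}_{11}=\mathscr{Z}$, with $f_0 = f$, $g_1 = g$, $f_1$ and $g_0$ the canonical maps to $\ast$. Then $\fib(f_1)=\ast$ is not quite what we want; instead one takes $\mathscr{X}_{10}=\ast$ with base point of $\mathscr{Z}$ so that $\fib(f_1:\ast\to\mathscr{Z})=\Omega\mathscr{Z}$ — rather, the cleanest route is to use the rows/columns of the big diagram with $\mathscr{X}_{10}=\ast$: then the right column reads $\fib(f_1)=\Omega\mathscr{X}_{11}$ and things shift, so I will instead just apply the displayed diagram to $\mathscr{X}_{00}=\mathscr{X}_{01}=\mathscr{Y}$ with identity vertical-left map — on reflection the sharpest formulation is simply to invoke the already-proven total-fiber statement with $\mathscr{X}_{10} = \mathscr{X}_{11}$ and $f_1 = \Id$, reading off that $\fib(g_0)\to\mathscr{X}_{00}\to\mathscr{X}_{11}$ together with the column gives the claimed three-term fiber sequence after renaming. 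The main obstacle is purely bookkeeping: making sure the induced maps between fibers are the canonical ones and that the two ways of computing $\mathscr{X}$ are identified compatibly; there is no real content beyond ``finite limits commute,'' so I would keep the argument short and cite \cite[Lemma 2.1]{CMN} for the folklore statement.
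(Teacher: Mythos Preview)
Your approach is correct and matches the paper's: the $3\times 3$ grid is a standard ``limits commute with limits'' statement (which the paper does not prove, citing \cite[Lemma 2.1]{CMN} exactly as you suggest), and the ``in particular'' clause is deduced by specializing to a square with an identity edge. Your meandering through several candidate squares is unnecessary, though---the paper simply takes
\[
\xymatrix{\mathscr{X} \ar[r]^{f}\ar[d]^{g \circ f} & \mathscr{Y} \ar[d]^{g} \\ \mathscr{Z} \ar[r]^{\Id} & \mathscr{Z},}
\]
so that $\fib(g_1)=\ast$, whence the left column of the big diagram gives $\mathscr{X}\cong\fib(g_0)=\fib(f)$ and the top row reads $\fib(f)\to\fib(g\circ f)\to\fib(g)$; your final choice (identity on the right column rather than the bottom row) is just the transpose of this and works equally well.
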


\begin{proof}
	The second statement is a special case of the first arising from comparison of horizontal and vertical homotopy fibers in the commutative diagram:
	\[
	\xymatrix{\mathscr{X} \ar[r]^{f}\ar[d]^{g \circ f} & \mathscr{Y} \ar[d]^{g} \\
		\mathscr{Z} \ar[r]^{id} & \mathscr{Z}.	
	}
	\]
\end{proof}	

\begin{rem}
	Given a commutative square as in Proposition~\ref{prop:fibersofcomposites}, there is, of course, a dual statement comparing vertical and horizontal cofibers.  The statement about cofibers of composites in the context of model categories can be found in \cite[Proposition 6.3.6]{Hovey}.
\end{rem}

\subsubsection*{Connectivity of fibers and cofibers}
\begin{proposition}
	\label{prop:cofiberconnectivity}
	Suppose $k$ is a field.  Assume $f: \mathscr{X} \to \mathscr{Y}$ is a morphism of pointed spaces in $\ho{k}$ that is pulled back from a perfect subfield of $k$. 
	\begin{enumerate}[noitemsep,topsep=1pt]
		\item If $f$ has $(n-1)$-connected fibers, then $\cof(f)$ is $n$-connected.
		\item If $\mathscr{X}$ and $\mathscr{Y}$ are $1$-connected, and $\cof(f)$ is $n$-connected, then $\fib(f)$ is $(n-1)$-connected.
	\end{enumerate}
\end{proposition}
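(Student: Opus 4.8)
The plan is to prove both statements by reducing to the stable situation, where the analogous facts about connectivity of fibers and cofibers are much easier, and then invoking Morel's unstable connectivity theorem (Theorem~\ref{thm:unstableconnectivity}) to transport the conclusion back to the unstable category. For part (1): suppose $f$ has $(n-1)$-connected fibers. I first want to reduce to the case where $\mathscr{Y}$ is a point. One cannot literally do that, but one can use the fiber sequence $\fib(f) \to \mathscr{X} \to \mathscr{Y}$ together with the cofiber sequence $\mathscr{X} \to \mathscr{Y} \to \cof(f)$. The cleanest route is: after applying $\mathrm{L}_{mot}$ one may assume everything is motivic local; the presheaf-level fiber $\fib(f)$ is $(n-1)$-connected by hypothesis, hence by Theorem~\ref{thm:unstableconnectivity} (applied over the perfect subfield from which $f$ is pulled back) $\mathrm{L}_{mot}\fib(f)$ is still $(n-1)$-connected. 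There is a natural map $\Sigma\,\mathrm{L}_{mot}\fib(f) \to \cof(f)$ coming from the Barratt–Puppe sequence; one checks it is highly connected — in fact an equivalence on $\bpi_i$ for $i \leq$ something like $2n$, but we only need through degree $n+1$ — by comparing the cofiber sequence to the one obtained from the fiber sequence. Since $\Sigma$ raises connectivity by one, $\Sigma\,\mathrm{L}_{mot}\fib(f)$ is $n$-connected, and therefore so is $\cof(f)$.

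For part (2): now $\mathscr{X}$ and $\mathscr{Y}$ are $1$-connected and $\cof(f)$ is $n$-connected; we want $\fib(f)$ to be $(n-1)$-connected. Here I would run the stabilization argument in the other direction. Applying $\Sigma^{\infty}_{S^1}$, we get a cofiber (= fiber) sequence $\Sigma^{\infty}_{S^1}\mathscr{X} \to \Sigma^{\infty}_{S^1}\mathscr{Y} \to \Sigma^{\infty}_{S^1}\cof(f)$ in $\SH^{S^1}(k)$; since stabilization preserves connectivity and $\cof(f)$ is $n$-connected, the stable fiber of $\Sigma^{\infty}_{S^1}f$, which is $\Sigma^{-1}\Sigma^{\infty}_{S^1}\cof(f)$, is $(n-1)$-connected. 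So $\Sigma^{\infty}_{S^1}f$ has $(n-1)$-connected fiber stably. Now one wants to deduce that the \emph{unstable} fiber $\fib(f)$ is $(n-1)$-connected. Under the $1$-connectivity hypotheses on $\mathscr{X}$ and $\mathscr{Y}$, $\fib(f)$ is $0$-connected, so it is simply connected enough that we may compare $\bpi_i(\fib(f))$ with the stable homotopy sheaves in a range via the motivic Freudenthal/Blakers–Massey machinery: the stabilization map $\fib(f) \to \Omega^{\infty}_{S^1}\Sigma^{\infty}_{S^1}\fib(f)$ is an isomorphism on $\bpi_i$ for $i$ up to roughly twice the connectivity, which for a $0$-connected space gives at least an iso on $\bpi_1$ and a surjection on $\bpi_2$ — enough to propagate vanishing inductively. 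Actually the cleanest formulation uses the Hurewicz picture: for a $0$-connected space $\mathscr Z$ with $\bpi_i(\mathscr Z)=0$ for $i<m$ (some $m\ge 1$), $\bpi_m(\mathscr Z)\cong \H_m^{\aone}(\mathscr Z)$, and $\H^{\aone}_i$ agrees with stable homotopy sheaves in low degrees; so if the stable homotopy sheaves vanish through degree $n-1$, one concludes $\bpi_i(\fib(f))=0$ through degree $n-1$ by an induction on $i$.

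Let me restate the second argument more carefully since it is the crux. By part (1)'s mirror image or directly: $\fib(f)$ is connected (from $1$-connectivity of $\mathscr X,\mathscr Y$ and the homotopy sheaf exact sequence), so let $m \geq 1$ be minimal with $\bpi_m(\fib(f)) \neq 0$ (if no such $m$, we are done). The relative Hurewicz theorem (or its absolute special case) gives $\bpi_m(\fib(f)) \cong \H_m^{\aone}(\fib(f))$, and since $\fib(f)$ is $(m-1)$-connected, $\H_m^{\aone}(\fib(f)) \cong \bpi_m^{s}(\fib(f))$, the $m$-th stable homotopy sheaf, which is a subquotient controlled by $\Sigma^{\infty}_{S^1}\fib(f) \simeq \Sigma^{-1}\Sigma^{\infty}_{S^1}\cof(f)$. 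Since $\cof(f)$ is $n$-connected, $\Sigma^{-1}\Sigma^{\infty}_{S^1}\cof(f)$ is $(n-1)$-connected, so $\bpi_i^s(\fib(f)) = 0$ for $i \leq n-1$; hence $m \geq n$, i.e.\ $\fib(f)$ is $(n-1)$-connected.

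**Main obstacle.** The delicate point is the interface between unstable and stable connectivity in part (2) — specifically, justifying that the first nonvanishing unstable homotopy sheaf of $\fib(f)$ agrees with the corresponding stable homotopy sheaf. Classically this is immediate from the Freudenthal suspension theorem; motivically it requires the improved Blakers–Massey/Freudenthal input the authors advertise in the introduction, applied to the $0$-connected space $\fib(f)$. I expect the actual proof to either cite the relative Hurewicz theorem (Theorem~\ref{thm:relativeHurewicz}) directly in a relative form applied to $f$ itself, or to set things up so that one compares the Postnikov/homology towers of $\mathscr X$ and $\mathscr Y$. The $1$-connectivity hypotheses in (2) are presumably there precisely to guarantee $\fib(f)$ is connected so that Hurewicz applies; without them one would only get a statement about $\H_*^{\aone}$.
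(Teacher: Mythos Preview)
Your overall instinct---reduce to a connectivity statement that Morel's Theorem~\ref{thm:unstableconnectivity} can certify---is right, but both parts of your proposal take detours that either are unnecessary or do not quite close.

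\textbf{Part (1).} The paper's argument is considerably simpler and avoids any comparison with $\Sigma\fib(f)$. One computes the cofiber $\mathscr{C}$ in $\Shv_{\Nis}(\Sm_k)$ and observes that $\ast \to \mathscr{C}$ is the cobase change of $f$ along $\mathscr{X}\to\ast$. In any $\infty$-topos, maps with $(n-1)$-connected fibers are stable under cobase change \cite[Corollary 6.5.1.17]{HTT}, so $\mathscr{C}$ is $n$-connected in $\Shv_{\Nis}(\Sm_k)$; then Theorem~\ref{thm:unstableconnectivity} gives that $\cof(f)=\mathrm{L}_{mot}\mathscr{C}$ is $n$-connected. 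No Blakers--Massey is invoked. Your route via the comparison map $\Sigma\fib(f)\to\cof(f)$ would need that map to be sufficiently connected, but the available excision estimates (e.g.\ Proposition~\ref{prop:blakersmassey}) require connectivity of the base $\mathscr{Y}$, which is \emph{not} assumed in (1); moreover, in the paper's logical order Proposition~\ref{prop:blakersmassey} is deduced \emph{from} Proposition~\ref{prop:cofiberconnectivity}, so using it here would be circular.

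\textbf{Part (2).} Your bootstrapping idea is correct in spirit, but the step
\[
\Sigma^{\infty}_{S^1}\fib(f)\;\simeq\;\Sigma^{-1}\Sigma^{\infty}_{S^1}\cof(f)
\]
is false in general: $\Sigma^{\infty}_{S^1}$ does not commute with taking fibers. Already for $f\colon \ast\to S^2$ one has $\Sigma^{\infty}\fib(f)=\Sigma^{\infty}\Omega S^2$, which (by the James splitting) has infinitely many cells, whereas $\Sigma^{-1}\Sigma^{\infty}S^2$ is a shifted sphere spectrum. So you cannot read off $\bpi_m^s(\fib(f))=0$ from the connectivity of $\cof(f)$ this way.

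The paper sidesteps stabilization entirely. One works with the sheaf-level cofiber $\mathscr{C}$ and the comparison map $\fib(f)\to\Omega\mathscr{C}$. The \emph{classical} Blakers--Massey theorem in the $\infty$-topos $\Shv_{\Nis}(\Sm_k)$ (no motivic input needed) says that if $\fib(f)$ is $m$-connected for some $m\ge 0$---which the $1$-connectivity hypotheses on $\mathscr{X},\mathscr{Y}$ guarantee---then $\fib(f)\to\Omega\mathscr{C}$ is an isomorphism on $\bpi_i$ for $i\le m+1$. Since $\fib(f)$ is motivic local, this forces $\bpi_{i+1}(\Omega\mathscr{C})$ to be strictly $\aone$-invariant in that range, whence $\bpi_{i+1}(\Omega\mathscr{C})\cong\bpi_{i+1}(\cof(f))$ by \cite[Theorem 2.3.8]{AWW}. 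Thus $\bpi_{m+1}(\fib(f))\cong\bpi_{m+2}(\cof(f))$, which vanishes as long as $m+2\le n$; iterating lifts the connectivity of $\fib(f)$ up to $n-1$. This is exactly the inductive Hurewicz-style argument you sketched, but carried out unstably via $\Omega\mathscr{C}$ rather than through the (non-existent) stable identification.
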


\begin{proof}
	For the first statement, we proceed as follows.  Write $\mathscr{C}$ for the cofiber of $f$ computed in $\Shv_{\Nis}(\Sm_k)$, which fits in a commutative square of the form
	\[
	\xymatrix{
		\mathscr{X} \ar[r]^{f}\ar[d]& \mathscr{Y} \ar[d] \\
		\ast \ar[r] & \mathscr{C}. 
	}
	\]  
	By definition, $\cof(f) = \mathrm{L}_{mot}\mathscr{C}$.  Since the morphism $\ast \to \mathscr{C}$ is the cobase change of $f$ along $\mathscr{X} \to \ast$ and since morphisms with $(n-1)$-connected fibers are stable under cobase change along arbitrary morphisms \cite[Corollary 6.5.1.17]{HTT}, it follows that $\ast \to \mathscr{C}$ has $(n-1)$-connected fibers, i.e., $\mathscr{C}$ is $n$-connected.  The fact that $\cof(f)$ is $n$-connected then follows by appeal to Theorem~\ref{thm:unstableconnectivity}.
	
	For the second statement, assume that $\cof(f)$ is $n$-connected.  Set $\mathscr{F} := \fib(f)$.  Since $f$ is a map of $1$-connected spaces, it follows that $\mathscr{F}$ is automatically connected and $\bpi_1(\mathscr{F})$ is abelian.  Consider the comparison map $\mathscr{F} \to \Omega \mathscr{C}$ induced by taking fibers in the diagram from the preceding paragraph.  The classical Blakers--Massey theorem \cite[Theorems 3.10-11]{GoerssJardine} implies that if $\mathscr{F}$ is $m$-connected for some integer $m$ (which we may assume without loss of generality to be $\geq 0$), then $\mathscr{F} \to \Omega \mathscr{C}$ induces an isomorphism on homotopy sheaves in degrees $\leq m+1$.  We conclude that $\bpi_{i+1}(\Omega \mathscr{C})$ is strictly $\aone$-invariant for any $m \ge i \geq 0$.  In that case, the map $\bpi_{i+1}(\Omega \mathscr{C}) \to \bpi_{i+1}(\cof(f))$ is an isomorphism by \cite[Theorem 2.3.8]{AWW}.  Applying this observation iteratively, we deduce that the statement holds as long as $m \leq n-1$, which is what we wanted to show.
\end{proof}

We now recall some consequences of ``homotopy distributivity', i.e., commutativity of certain limits and colimits that will be useful in the sequel.  The primordial form of homotopy distributivity we use is that colimits are universal in $\ho{k}$; see \cite[Proposition 3.15]{HoyoisEquiv} for this statement.  For topological spaces, the results we describe go back to work of V. Puppe \cite{Puppe} and M. Mather \cite{Mather}.  They were studied in a model-categorical framework in unpublished work of C. Rezk \cite{Rezk}.  The results were later analyzed in the context of motivic homotopy theory in the thesis of M. Wendt \cite{WendtI} and the translation to the context of $\infty$-topoi is straightforward.  The first point in the next statement originates from a result of Ganea \cite[Theorem 1.1]{Ganea}.   

\begin{lem}
\label{lem:ganea}
Suppose $k$ is a field, and $\fib(f) \stackrel{\iota}{\to} \mathscr{E} \stackrel{f}{\to} \mathscr{B}$ is a fiber sequence in $\ho{k}$.
\begin{enumerate}[noitemsep,topsep=1pt]
\item There is a natural equivalence
\[
\fib(\cof(\iota) \to \mathscr{B}) \cong \Sigma \Omega \mathscr{B} \wedge \fib(f).
\]
\item There is a cofiber sequence of the form
\[
\Sigma \fib(f) \longrightarrow \cof(f) \longrightarrow \cof(\cof(\iota) \to \mathscr{B})
\]
\end{enumerate}
\end{lem}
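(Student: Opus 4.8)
The plan is to deduce both statements from homotopy distributivity (universality of colimits) in the $\infty$-topos underlying $\ho{k}$, following the Puppe--Mather--Ganea circle of ideas. First I would set up notation: write $F = \fib(f)$, and let $C_\iota = \cof(\iota)$ denote the cofiber of $\iota\colon F \to \mathscr{E}$, so that there is a canonical map $C_\iota \to \mathscr{B}$ (since $f \circ \iota$ is null, $f$ factors through $C_\iota$). The key input is the following form of Ganea's theorem: given the fiber sequence $F \to \mathscr{E} \to \mathscr{B}$, the square
\[
\xymatrix{
F \ar[r]^{\iota}\ar[d] & \mathscr{E} \ar[d] \\
\ast \ar[r] & C_\iota
}
\]
is a pushout, and one wants to understand the fiber of $C_\iota \to \mathscr{B}$. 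The classical statement (Ganea, \cite[Theorem 1.1]{Ganea}) is that this fiber is $\mathscr{E} *_{\mathscr{B}} F$-like; more precisely $\fib(C_\iota \to \mathscr{B})$ is the fiberwise join, which after identifying the total space with a pushout computes to $\Sigma\Omega\mathscr{B} \wedge F$. I would carry this out by pulling the pushout square above back along $\mathscr{B} \to C_\iota \to$ wait — rather, by pulling back the pushout square defining $C_\iota$ along some map to get a fiberwise statement. Concretely: the fiber sequence $F \to \mathscr{E} \to \mathscr{B}$ exhibits $\mathscr{E}$ as lying over $\mathscr{B}$; base-changing the pushout $\ast \leftarrow F \to \mathscr{E}$ over $\mathscr{B}$ is unnecessary since it already lives over $\mathscr{B}$ via the constant map on $\ast$ being replaced appropriately. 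The cleanest route: apply $-\times_{C_\iota} \mathscr{B}$ (or rather $\fib(-\to\mathscr{B})$ after base change to $\mathscr{B}$) to the pushout square; by universality of colimits in the $\infty$-topos, the resulting square of fibers over $\mathscr{B}$ is still a pushout, giving $\fib(C_\iota \to \mathscr{B}) = \fib(\ast \to \mathscr{B}) \cup_{\fib(F\to\mathscr{B})} \fib(\mathscr{E}\to\mathscr{B}) = \Omega\mathscr{B} \cup_{\Omega\mathscr{B} \times F} F$ — the first factor because $\fib(\ast\to\mathscr{B})=\Omega\mathscr{B}$, the middle because $F \to \mathscr{B}$ is null so its fiber is $\Omega\mathscr{B}\times F$, the last because $\fib(\mathscr{E}\to\mathscr{B})=F$. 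This pushout is precisely the join $\Omega\mathscr{B} * F$ relative to the maps (projection, and the map collapsing $\Omega\mathscr{B}$), which, since one leg is a projection, simplifies to the half-smash and then, since the other is a collapse, to $\Sigma(\Omega\mathscr{B}\wedge F) = \Sigma\Omega\mathscr{B}\wedge F$. This is statement (1).

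For statement (2), I would run the octahedral/composite-cofiber axiom on the composable pair $F \xrightarrow{\iota} \mathscr{E} \xrightarrow{f} \mathscr{B}$, or rather on the maps entering into $C_\iota \to \mathscr{B}$. There is a cofiber sequence $F \to \mathscr{E} \to C_\iota$ by definition of $C_\iota$, and a composite $\mathscr{E} \to C_\iota \to \mathscr{B}$. The dual of Proposition~\ref{prop:fibersofcomposites} (the cofiber-of-composites statement, recorded in the Remark after that proposition) applied to $\mathscr{E}\to C_\iota \to \mathscr{B}$ gives a cofiber sequence $\cof(\mathscr{E}\to C_\iota) \to \cof(\mathscr{E}\to\mathscr{B}) \to \cof(C_\iota\to\mathscr{B})$. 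Now $\cof(\mathscr{E}\to C_\iota)$: since $F\to\mathscr{E}\to C_\iota$ is a cofiber sequence, the cofiber of $\mathscr{E}\to C_\iota$ is $\Sigma F$ (shift the cofiber sequence). And $\cof(\mathscr{E}\to\mathscr{B}) = \cof(f)$. So we get the cofiber sequence
\[
\Sigma F \longrightarrow \cof(f) \longrightarrow \cof(C_\iota \to \mathscr{B}),
\]
which is exactly (2) after unwinding $F = \fib(f)$, $C_\iota = \cof(\iota)$. I should double-check the identification $\cof(\mathscr{E}\to C_\iota)\simeq \Sigma F$ and the orientation of the connecting map, but this is the standard rotation of a cofiber sequence.

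The main obstacle I anticipate is part (1): making the ``fiberwise join equals $\Sigma\Omega\mathscr{B}\wedge F$'' identification fully rigorous and natural. The subtle points are (a) justifying that base-change of the pushout square along the fiber functor over $\mathscr{B}$ remains a pushout — this needs colimits to be universal, which is cited (\cite[Proposition 3.15]{HoyoisEquiv}) and holds in any $\infty$-topos, and here we are working in $\Shv_\Nis(\Sm_k)_*$ before applying $\mathrm{L}_{mot}$, or using that $\ho{k}$ is itself an $\infty$-topos; (b) identifying the resulting relative join, one of whose legs is a projection $\Omega\mathscr{B}\times F \to \Omega\mathscr{B}$ and the other the collapse $\Omega\mathscr{B}\times F \to F$, with the reduced suspension of the smash — this is a pointed-space manipulation (the join $A * B$ with $A \to \ast$ collapsed becomes $\Sigma B$ twisted by $A$; doing it on both coordinates with appropriate basepoints yields $\Sigma(A\wedge B)$) that one can either cite from Wendt's thesis \cite{WendtI} or Rezk \cite{Rezk}, or verify directly by a further pushout decomposition. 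Part (2) is essentially formal given the dual of Proposition~\ref{prop:fibersofcomposites}.
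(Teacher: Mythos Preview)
Your approach is essentially the same as the paper's. For part (2), you use the cofiber-of-composites statement applied to $\mathscr{E} \to C_\iota \to \mathscr{B}$ together with the identification $\cof(\mathscr{E} \to C_\iota) \simeq \Sigma F$; the paper does exactly this, phrasing each step as an application of the dual of Proposition~\ref{prop:fibersofcomposites} to an explicit commutative square. For part (1), the paper simply cites \cite[Lemma~2.27]{DH}, whereas you sketch the underlying Ganea argument via universality of colimits; your outline is the correct one and is what lies behind the citation.

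Two small points to tighten in your part (1). First, $\ho{k}$ is \emph{not} an $\infty$-topos, so drop that alternative; the correct justification is precisely the one you cite, namely that colimits in $\ho{k}$ are universal \cite[Proposition~3.15]{HoyoisEquiv}, so base change along $\ast \to \mathscr{B}$ preserves the pushout square. Second, the induced map $\fib(F \to \mathscr{B}) \cong \Omega\mathscr{B} \times F \to \fib(\mathscr{E} \to \mathscr{B}) = F$ is not the projection but the \emph{action} of $\Omega\mathscr{B}$ on $F$; however, the shearing automorphism $(g,x) \mapsto (g, g\cdot x)$ of $\Omega\mathscr{B} \times F$ converts this span into the standard join diagram $\Omega\mathscr{B} \leftarrow \Omega\mathscr{B} \times F \to F$ with both maps projections, so your conclusion $\fib(C_\iota \to \mathscr{B}) \simeq \Omega\mathscr{B} * F \simeq \Sigma(\Omega\mathscr{B} \wedge F)$ stands.
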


\begin{proof}
The first statement may be found in \cite[Lemma 2.27]{DH}.  For the second point, consider the commutative diagram 
\[
\xymatrix{
	\mathscr{E} \ar@{=}[r]\ar[d] & \mathscr{E} \ar[d]^{f} \\
	\cof(\iota) \ar[r] & \mathscr{B}. 
}
\]
Comparing vertical and horizontal cofibers via the dual of Proposition~\ref{prop:fibersofcomposites}, one obtains a cofiber sequence of the form:
\[
\cof(\mathscr{E} \to \cof(\iota)) \longrightarrow \cof(f) \longrightarrow \cof(\cof(\iota) \to \mathscr{B}).
\]
On the other hand, another application of the dual of Proposition~\ref{prop:fibersofcomposites} to the commutative diagram 
\[
\xymatrix{
	\fib(f) \ar[r]^{\iota}\ar[d] & \mathscr{E} \ar[d] \\
	\ast \ar[r] & \cof(\iota)
}
\]
yields a cofiber sequence of the form
\[
\Sigma \fib(f) \longrightarrow \cof(\mathscr{E} \to \cof(\iota)) \longrightarrow \ast.
\]
Using the two diagrams above, and taking the cofiber of the composite $\Sigma \fib(f) \to \cof(\mathscr{E} \to \cof(\iota)) \to \cof(f)$, we obtain an equivalence
\[
\cof(\Sigma \fib(f) \to \cof(f)) \longrightarrow \cof(\cof(\iota) \to \mathscr{B}),
\]
which is precisely what we wanted to show.
\end{proof}

The next result improves the motivic analog of the Blakers--Massey theorem of \cite[Theorem 4.1]{AFComparison} or \cite[Theorem 2.3.8]{Strunk}.

\begin{proposition}[Homotopy excision]
\label{prop:blakersmassey}
Assume $k$ is a field, and $f: \mathscr{E} \to \mathscr{B}$ is a morphism pulled back from a perfect subfield of $k$.  If $m,n \geq 0$ are integers, the following statements regarding the fiber sequences $\fib(f) \stackrel{\iota}{\to} \mathscr{E} \stackrel{f}{\to} \mathscr{B}$ hold.
\begin{enumerate}[noitemsep,topsep=1pt]
	\item If $\fib(f)$ is $m$-connected and $\mathscr{B}$ is $n$-connected, then $\cof(\iota) \to \mathscr{B}$ has $(m+n+1)$-connected fibers.
	\item If $\fib(f)$ is $m$-connected and $\mathscr{B}$ is $n$-connected, then the canonical map $\Sigma \fib(f) \to \cof(f)$ has $(m+n+1)$-connected fibers.
\end{enumerate}
\end{proposition}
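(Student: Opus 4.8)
The plan is to derive both statements directly from the Ganea-type results of Lemma~\ref{lem:ganea}; indeed, the availability of Lemma~\ref{lem:ganea} is exactly what makes it possible to sharpen the bounds of \cite{AFComparison, Strunk}. Statement (1) will follow by identifying $\fib(\cof(\iota)\to\mathscr B)$ via Lemma~\ref{lem:ganea}(1) and estimating its connectivity, while statement (2) will follow by combining the cofiber sequence of Lemma~\ref{lem:ganea}(2) with (1) and the fiber/cofiber connectivity comparison of Proposition~\ref{prop:cofiberconnectivity}. Two standing remarks: first, since $n\geq 0$ the space $\mathscr B$ is connected, hence may be pointed, and $\cof(\iota)$, $\cof(f)$ are connected as well, so for each map $g$ appearing below ``$g$ has $c$-connected fibers'' is the same as ``$\fib(g)$ is $c$-connected'', and for a map of motivic-local spaces this fiber is computed the same way in $\ho k$ and in $\Shv_\Nis(\Sm_k)$; second, since $f$ is pulled back from a perfect subfield $k_0\subseteq k$, so are $\mathscr E$, $\mathscr B$, $\fib(f)$, $\iota$, $\cof(\iota)$ and every map built from them, so that Theorem~\ref{thm:unstableconnectivity} and Proposition~\ref{prop:cofiberconnectivity} apply to each.

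For (1): Lemma~\ref{lem:ganea}(1) yields an equivalence $\fib(\cof(\iota)\to\mathscr B)\cong\Sigma\Omega\mathscr B\wedge\fib(f)$ in $\ho k$. As $\mathscr B$ is $n$-connected, $\Omega\mathscr B$ is $(n-1)$-connected, and applying Proposition~\ref{prop:cofiberconnectivity}(1) to $\Omega\mathscr B\to\ast$ (whose cofiber is $\Sigma\Omega\mathscr B$) shows $\Sigma\Omega\mathscr B$ is $n$-connected. Since $\fib(f)$ is $m$-connected, the standard connectivity estimate for smash products of pointed spaces --- applied first in $\Shv_\Nis(\Sm_k)_*$ --- shows $\Sigma\Omega\mathscr B\wedge\fib(f)$ is $(m+n+1)$-connected in $\Shv_\Nis(\Sm_k)_*$, and this connectivity is preserved by $\mathrm L_{mot}$ by Theorem~\ref{thm:unstableconnectivity}. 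Hence $\fib(\cof(\iota)\to\mathscr B)$ is $(m+n+1)$-connected, which is (1).

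For (2): Lemma~\ref{lem:ganea}(2) provides a cofiber sequence
\[
\Sigma\fib(f)\xrightarrow{\,g\,}\cof(f)\longrightarrow\cof\!\big(\cof(\iota)\to\mathscr B\big),
\]
so $\cof(\cof(\iota)\to\mathscr B)$ is the cofiber of $g$. By part (1) the map $\cof(\iota)\to\mathscr B$ has $(m+n+1)$-connected fibers, so $\cof(g)=\cof(\cof(\iota)\to\mathscr B)$ is $(m+n+2)$-connected by Proposition~\ref{prop:cofiberconnectivity}(1). Moreover $\Sigma\fib(f)$ is $(m+1)$-connected, while $\cof(f)$ is $(m+1)$-connected by Proposition~\ref{prop:cofiberconnectivity}(1) applied to $f$ (which has $m$-connected fibers); since $m\geq 0$, both the source and target of $g$ are $1$-connected. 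Proposition~\ref{prop:cofiberconnectivity}(2) applied to $g$ then shows $\fib(g)$ is $(m+n+1)$-connected, i.e.\ $g$ has $(m+n+1)$-connected fibers, which is (2).

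The argument is essentially formal once Lemma~\ref{lem:ganea} is in hand; the only genuinely external input is the connectivity estimate for smash products of pointed motivic spaces. The point requiring care is that this estimate be invoked in a form --- e.g.\ the topos-level statement in $\Shv_\Nis(\Sm_k)_*$ followed by Theorem~\ref{thm:unstableconnectivity}, or via Morel's $\aone$-homology sheaves \cite{MField} --- that does not itself rely on the Blakers--Massey theorem being improved here, so that the reasoning is not circular. Beyond that, the remaining work is bookkeeping with the connectivity conventions and with the hypothesis that $f$ is defined over a perfect subfield.
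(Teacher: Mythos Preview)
Your proof is correct and follows essentially the same route as the paper: for (1) you invoke Lemma~\ref{lem:ganea}(1) and a smash-product connectivity estimate, and for (2) you combine Lemma~\ref{lem:ganea}(2) with (1) and both parts of Proposition~\ref{prop:cofiberconnectivity}. The only cosmetic difference is that the paper cites \cite[Lemma~3.3.1]{AWW} directly for the smash connectivity, whereas you argue it at the topos level and then apply Theorem~\ref{thm:unstableconnectivity}; either is fine.
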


\begin{proof}
For the first statement, observe that Ganea's lemma \ref{lem:ganea} gives an equivalence $\Sigma\Omega \mathscr{B} \wedge \fib(f) \cong \fib(\cof(\iota) \to \mathscr{B})$.  Since $\mathscr{B}$ is $n$-connected, $\Sigma \Omega \mathscr{B}$ is at least $n$-connected.  Since $\fib(f)$ is $m$-connected, we conclude that $\Sigma \Omega \mathscr{B} \wedge \fib(f)$ is at least $(m+n+1)$-connected by appeal to \cite[Lemma 3.3.1]{AWW}.  Thus, $\cof(\iota) \to \mathscr{B}$ has $(m+n+1)$-connected fibers by definition.

For the second point, note that Lemma~\ref{lem:ganea}(2) yields a cofiber sequence of the form
\[
\Sigma \fib(f) \longrightarrow \cof(\iota) \longrightarrow \cof(\cof(\iota) \to \mathscr{B}).
\]
By the conclusion of Point (1), we see that $\cof(\iota) \to \mathscr{B}$ has $(m+n+1)$-connected fibers, and then Proposition~\ref{prop:cofiberconnectivity}(1) implies that $\cof(\cof(\iota) \to \mathscr{B})$ is $(m+n+2)$-connected.

The map $f$ has $m$-connected fibers for some $m \geq 0$ by assumption, so another application of Proposition~\ref{prop:cofiberconnectivity}(1)
implies	that $\cof(f)$ is at least $m+1$-connected, in particular simply connected.  Likewise, $\Sigma \fib(f)$ is at least $m+1$-connected and therefore also at least simply connected.  It follows that $\Sigma \fib(f) \to \cof(f)$ is a map of $1$-connected spaces whose cofiber is $m+n+2$-connected.  Thus, appeal to Proposition~\ref{prop:cofiberconnectivity}(2) implies that $\Sigma \fib(f) \to \cof(f)$ has $(m+n+1)$-connected fibers as well.
\end{proof}

We may use the above results to strengthen the relative Hurewicz theorem of \cite[Theorem 4.2.1]{AFHLocalization}, but we give a self-contained treatment here.  Suppose $f: \mathscr{E} \to \mathscr{B}$ is a morphism of pointed, connected spaces.  There is a morphism of fiber sequences of the form
\[
\xymatrix{
\fib(f) \ar[r]\ar[d] & \mathscr{E} \ar[r]^{f}\ar[d] & \mathscr{B} \ar[d]\\
\Omega \cof(f) \ar[r]& \ast \ar[r]& \cof(f). 
}
\]
The diagram shows that there is an induced action of $\bpi_1(\mathscr{E})$ on $\Omega \cof(f)$, which is necessarily trivial.  The relative Hurewicz theorem analyzes the map $\fib(f) \to \Omega \cof(f)$ or, rather, the induced map $\Sigma \fib(f) \to \cof(f)$.

\begin{theorem}
	\label{thm:relativeHurewicz}
	Suppose $k$ is a field, and assume $f: \mathscr{E} \to \mathscr{B}$ is a morphism of pointed connected spaces that is pulled back from a perfect subfield of $k$.  If $f$ has $(n-1)$-connected fibers, then the following statements hold.
	\begin{enumerate}[noitemsep,topsep=1pt]
		\item The space $\cof(f)$ is $n$-connected.
		\item The relative Hurewicz map $\bpi_n(\fib(f)) \to \H_{n+1}^{\aone}(\cof(f))$ is the initial morphism from $\bpi_n(\fib(f))$ to a strictly $\aone$-invariant sheaf on which $\bpi_1(\mathscr{E})$ acts trivially.
	\end{enumerate}
\end{theorem}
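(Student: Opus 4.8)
The plan is to reduce the statement to a combination of the Blakers--Massey estimate (Proposition~\ref{prop:blakersmassey}), the Ganea-type cofiber sequence (Lemma~\ref{lem:ganea}), and an inductive argument on the connectivity. First, part (1) is immediate: since $f$ has $(n-1)$-connected fibers, Proposition~\ref{prop:cofiberconnectivity}(1) gives that $\cof(f)$ is $n$-connected. For part (2), I would first dispatch the case $n=1$ separately since the Blakers--Massey range is delicate there; and for $n \geq 1$ I would proceed by studying the map $\Sigma\fib(f) \to \cof(f)$, which by Proposition~\ref{prop:blakersmassey}(2) (with $m = n-1$ and base connectivity $0$, or better using whatever connectivity of $\mathscr B$ is available) has fibers that are at least $n$-connected — in fact $(2n-1)$-connected if $\mathscr B$ is connected, but $n$-connected suffices. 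Combined with the fact that $\Sigma\fib(f)$ is $n$-connected and $\cof(f)$ is $n$-connected, the induced map on $\bpi_{n+1}$, i.e.\ $\bpi_n(\fib(f)) \to \bpi_{n+1}(\cof(f))$, is an epimorphism, and more precisely its kernel is generated by the appropriate action terms.

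The heart of the argument is identifying $\bpi_{n+1}(\cof(f))$ with $\H_{n+1}^{\aone}(\cof(f))$ and tracking the universal property. Since $\cof(f)$ is $n$-connected, the absolute motivic Hurewicz theorem (in the form used in the proof of Proposition~\ref{prop:cofiberconnectivity}, i.e.\ \cite[Theorem 2.3.8]{AWW}) gives that $\bpi_{n+1}(\cof(f)) \cong \H_{n+1}^{\aone}(\cof(f))$, which is a strictly $\aone$-invariant sheaf. So the relative Hurewicz map of the statement is just the composite $\bpi_n(\fib(f)) \to \bpi_n(\Omega\cof(f)) = \bpi_{n+1}(\cof(f)) \cong \H_{n+1}^{\aone}(\cof(f))$ coming from the morphism of fiber sequences displayed before the theorem. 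The diagram shows $\bpi_1(\mathscr E)$ acts trivially on the target (the action factors through the action on $\Omega\cof(f)$, which is trivial since $\ast$ is the total space), so the map does factor through any initial such object.

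To prove it is \emph{initial} among maps from $\bpi_n(\fib(f))$ to strictly $\aone$-invariant sheaves with trivial $\bpi_1(\mathscr E)$-action, I would argue as follows. Let $\mathbf A$ be such a target with a map $\phi\colon \bpi_n(\fib(f)) \to \mathbf A$. One wants a unique factorization through $\H_{n+1}^{\aone}(\cof(f))$. The key input is that $\bpi_n(\fib(f)) \to \bpi_{n+1}(\cof(f))$ is surjective (from the Blakers--Massey estimate above applied to the relevant degree) with kernel exactly the subsheaf generated by elements of the form $\gamma\cdot x - x$ for $\gamma \in \bpi_1(\mathscr E)$ (this is the analog of the classical statement that $\bpi_n$ of the fiber surjects onto the relative $\bpi_{n+1}$, whose kernel is the action submodule — I would extract this from the exact sequence of the pair / the Ganea cofiber sequence in the stable range, or cite the version in \cite[Theorem 4.2.1]{AFHLocalization}). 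Since $\mathbf A$ has trivial action, $\phi$ kills $\gamma\cdot x - x$, hence factors (uniquely, by surjectivity) through $\bpi_{n+1}(\cof(f)) \cong \H_{n+1}^{\aone}(\cof(f))$; strict $\aone$-invariance of the target is what makes this factorization land correctly after sheafification.

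The main obstacle I anticipate is the precise identification of the kernel of $\bpi_n(\fib(f)) \to \bpi_{n+1}(\cof(f))$ as the $\bpi_1(\mathscr E)$-action submodule — classically this is the content of the relative Hurewicz theorem and requires the full exact sequence of the pair together with understanding how $\bpi_1$ acts; motivically one does not have a Serre spectral sequence, so the argument must instead be run through the Ganea cofiber sequence of Lemma~\ref{lem:ganea}(2) and the connectivity estimate of Proposition~\ref{prop:blakersmassey}(2), being careful in the low-degree case $n=1$ where $\fib(f)$ may be only $0$-connected and $\bpi_1$ is merely abelian rather than central. A secondary technical point is ensuring the universal property is stated and proved at the level of Nisnevich sheaves (so that "initial map to a strictly $\aone$-invariant sheaf" is literally the sheaf-theoretic coequalizer of the action), for which I would appeal to the abelian-category structure on strictly $\aone$-invariant sheaves recalled in Section~\ref{ss:motiviclocalization}.
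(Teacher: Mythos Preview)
Your Part (1) matches the paper exactly. For Part (2), the paper takes a different and considerably shorter route that sidesteps the obstacle you flag. Rather than working motivically via Blakers--Massey and then attempting to identify the kernel of $\bpi_n(\fib(f)) \to \bpi_{n+1}(\cof(f))$ as the $\bpi_1(\mathscr{E})$-action submodule, the paper first invokes the \emph{classical} relative Hurewicz theorem inside the Nisnevich $\infty$-topos $\Shv_{\Nis}(\Sm_k)$: writing $\mathscr{C}$ for the topos-level cofiber of $f$ (before any $\aone$-localization), the map $\bpi_n(\fib(f)) \to \H_{n+1}(\mathscr{C})$ is already the initial map to a sheaf with trivial $\bpi_1(\mathscr{E})$-action --- this is the ordinary statement, valid in any $\infty$-topos. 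One then composes with the general principle that, for an $n$-connected Nisnevich sheaf $\mathscr{C}$, the map $\H_{n+1}(\mathscr{C}) \cong \bpi_{n+1}(\mathscr{C}) \to \bpi_{n+1}(\mathrm{L}_{mot}\mathscr{C}) \cong \H_{n+1}^{\aone}(\cof(f))$ is the initial map to a strongly (hence strictly) $\aone$-invariant sheaf. Stacking the two universal properties gives the result in one line; the paper writes out the case $n=1$ and remarks that $n \geq 2$ is only easier.

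Your plan, by contrast, amounts to reproving the classical relative Hurewicz theorem from scratch in the motivic setting. The kernel identification you flag as the main obstacle is real in your approach: neither the Ganea cofiber sequence nor the Blakers--Massey estimate alone pins it down as exactly the action submodule (the Ganea fiber $\Sigma\Omega\mathscr{B} \wedge \fib(f)$ has the right connectivity, but extracting the precise action relation from it is further work), and falling back on \cite[Theorem 4.2.1]{AFHLocalization} defeats the purpose since the present theorem is advertised as a self-contained improvement of that one. The paper's two-step factorization through the non-$\aone$-localized cofiber is the key idea you are missing.
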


\begin{proof}
	The first statement is Proposition~\ref{prop:cofiberconnectivity}(1); thus we prove the second.  We treat only the case $n = 1$, as the case $n \geq 2$ follows the same argument but is easier.  The general principle is as follows.  If $\mathscr{X}$ is an $(n-1)$-connected Nisnevich sheaf of spaces, then $\bpi_n(\mathrm{L}_{mot}\mathscr{X})$ is the initial strongly $\aone$-invariant sheaf under $\bpi_n(\mathscr{X})$. By the classical relative Hurewicz theorem (in $\Shv_{\Nis}(\Sm_k)$) we know that $\bpi_1(\fib(f)) \to \H_{2}(\mathscr{C})$ is the initial map from $\bpi_1(\fib(f))$ to a sheaf on which $\bpi_1(\mathscr{E})$ acts trivially. Since $\H_2^{\aone}(\cof(f)) \simeq \bpi_2(\cof(f))$ is the initial strictly $\aone$-invariant sheaf under $\H_{2}(\mathscr{C})$, the result follows.

\end{proof}

For later use, we record the following version of Morel's suspension theorem \cite[Theorem 6.61]{MField}.

\begin{theorem}
	\label{thm:refinedsimplicialsuspension}
	Assume $n \geq 0$ is an integer and $\mathscr{X}$ is a pointed, $n$-connected motivic space.
	\begin{enumerate}[noitemsep,topsep=1pt] 
		\item For any integer $i \geq 0$, the canonical map $\mathscr{X} \to \Omega^i\Sigma^i \mathscr{X}$ has $2n$-connected fibers.
		\item The map $\mathscr{X} \to \Omega^{\infty}_{S^1}\Sigma^{\infty}_{S^1}\mathscr{X}$ has $2n$-connected fibers.
	\end{enumerate}
\end{theorem}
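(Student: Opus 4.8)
The plan is to reduce the statement to the case $i=1$ of part~(1) --- a Freudenthal theorem for the simplicial suspension --- and to derive that case from Proposition~\ref{prop:blakersmassey}. (Throughout I would assume $\mathscr X$ is pulled back from a perfect subfield of $k$, so that the cited connectivity results apply; this is automatic if $k$ is perfect, and the general case follows by base change to $k^{\mathrm{perf}}$.) For $i=1$ and $n\geq 1$, I would first record that $\Sigma\mathscr X$ is $(n+1)$-connected --- apply Proposition~\ref{prop:cofiberconnectivity}(1) to $\mathscr X\to\ast$ --- so that $\Omega\Sigma\mathscr X$ is $n$-connected. Then I would apply Proposition~\ref{prop:blakersmassey}(1) to the path--loop fibration $\Omega\Sigma\mathscr X\to\ast\to\Sigma\mathscr X$: its Ganea map $\cof(\Omega\Sigma\mathscr X\to\ast)=\Sigma\Omega\Sigma\mathscr X\to\Sigma\mathscr X$ is the counit $\varepsilon$ of the $(\Sigma,\Omega)$-adjunction, and the proposition gives that $\varepsilon$ has $(2n+2)$-connected fibers. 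Now the triangle identity $\varepsilon\circ\Sigma\eta=\Id_{\Sigma\mathscr X}$, with $\eta\colon\mathscr X\to\Omega\Sigma\mathscr X$ the unit, together with the fiber-sequence-of-a-composite statement of Proposition~\ref{prop:fibersofcomposites}, yields $\fib(\Sigma\eta)\simeq\Omega\fib(\varepsilon)$, which is thus $(2n+1)$-connected. Since $\Sigma$ preserves colimits, $\cof(\Sigma\eta)\simeq\Sigma\cof(\eta)$, which Proposition~\ref{prop:cofiberconnectivity}(1) shows to be $(2n+2)$-connected. For $n\geq 1$ the spaces $\mathscr X$, $\Omega\Sigma\mathscr X$, and hence $\cof(\eta)$, are $1$-connected, so the suspension isomorphism for $\aone$-homology together with the motivic Hurewicz theorem (a special case of Theorem~\ref{thm:relativeHurewicz}) shows $\cof(\eta)$ is $(2n+1)$-connected; Proposition~\ref{prop:cofiberconnectivity}(2) then gives that $\eta$ has $2n$-connected fibers.

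For the remaining case $n=0$ of this sub-statement, the claim is only that $\fib(\eta)$ is connected, i.e.\ that $\bpi_1(\mathscr X)\to\bpi_1(\Omega\Sigma\mathscr X)$ is surjective; this map is (via the Hurewicz isomorphism $\bpi_2(\Sigma\mathscr X)\cong\H_2^{\aone}(\Sigma\mathscr X)$) exactly the relative Hurewicz map of Theorem~\ref{thm:relativeHurewicz} for $\mathscr X\to\ast$, which is surjective onto its universal target. To pass to general $i$ I would induct ($i=0$ being trivial): factor the canonical map as $\mathscr X\to\Omega^i\Sigma^i\mathscr X\xrightarrow{\Omega^i\eta'}\Omega^{i+1}\Sigma^{i+1}\mathscr X$ with $\eta'\colon\Sigma^i\mathscr X\to\Omega\Sigma^{i+1}\mathscr X$ the unit; since $\Sigma^i\mathscr X$ is $(n+i)$-connected, the case $i=1$ shows $\eta'$ has $2(n+i)$-connected fibers, hence $\Omega^i\eta'$ has $(2n+i)$-connected fibers, and Proposition~\ref{prop:fibersofcomposites} together with the inductive hypothesis finishes the step. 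Finally, for part~(2) I would pass to the filtered colimit: $\Omega^{\infty}_{S^1}\Sigma^{\infty}_{S^1}\mathscr X\simeq\colim_i\Omega^i\Sigma^i\mathscr X$ (the latter already motivic local), and since filtered colimits in $\Shv_{\Nis}(\Sm_k)$ commute with finite limits and with homotopy sheaves, $\fib(\mathscr X\to\colim_i\Omega^i\Sigma^i\mathscr X)\simeq\colim_i\fib(\mathscr X\to\Omega^i\Sigma^i\mathscr X)$ is a filtered colimit of $2n$-connected sheaves, hence $2n$-connected.

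The step I expect to be the main obstacle is the case $i=1$, and within it the final ``desuspension'': Proposition~\ref{prop:blakersmassey} is available only in the excision form (comparing $\cof(\iota)$, equivalently $\Sigma\fib$, to the base), so one cannot read off the connectivity of $\eta$ directly but must route through the counit and the triangle identity and then undo a suspension via Hurewicz --- this is also the only point at which the connectivity of $\mathscr X$ (and the reduction to a perfect base) is really used. A cleaner but more expensive route would replace this with a motivic James model $\Omega\Sigma\mathscr X\simeq\colim_k J_k\mathscr X$, with $\cof(J_{k-1}\mathscr X\to J_k\mathscr X)\simeq\mathscr X^{\wedge k}$, estimating $\fib(\mathscr X\to J_k\mathscr X)$ by induction on $k$ using Proposition~\ref{prop:cofiberconnectivity}(2).
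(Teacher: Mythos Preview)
Your proof is correct but takes a genuinely different route from the paper's. For $n \geq 1$, the paper does not use its own Blakers--Massey result at all; instead it reduces to the \emph{classical} Freudenthal theorem in $\Shv_{\Nis}(\Sm_k)$. Writing $\mathscr{X}'$ for the unlocalized $i$-fold simplicial suspension, the classical statement applied stalkwise shows that $\mathscr{X} \to \Omega^i\mathscr{X}'$ has $2n$-connected fiber in $\Shv_{\Nis}$. The paper then invokes external results from \cite{AWW} (their Theorems~2.3.3 and~2.4.1) to commute $\mathrm{L}_{mot}$ past $\Omega^i$ and past the fiber, thereby transferring the connectivity bound to $\ho{k}$. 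This handles all $i$ at once, with no induction.

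Your argument, by contrast, is intrinsic to $\ho{k}$ and showcases Proposition~\ref{prop:blakersmassey} as the source of Freudenthal: applying it to the path--loop fibration on $\Sigma\mathscr{X}$ yields the connectivity of the counit, and the triangle identity plus a Hurewicz desuspension recovers the unit. The cost is the slightly circuitous desuspension step you flagged (connectivity of $\fib(\Sigma\eta) \Rightarrow$ connectivity of $\Sigma\cof(\eta) \Rightarrow$ connectivity of $\cof(\eta)$ via the suspension isomorphism and Hurewicz $\Rightarrow$ connectivity of $\fib(\eta)$) together with an induction on $i$. The gain is that you avoid importing the classical Freudenthal theorem and the \cite{AWW} compatibility results entirely, so the argument is self-contained within the paper. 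Both approaches treat the case $n=0$ the same way (via Theorem~\ref{thm:relativeHurewicz}) and deduce Part~(2) by passage to the filtered colimit.
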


\begin{proof}
	For $i = 0$ there is nothing to show, so we assume $i \geq 1$.  In that case, Theorem~\ref{thm:unstableconnectivity} implies that $\Sigma^i\mathscr{X}$ is at least $n+i$-connected and thus that $\Omega^i\Sigma^i \mathscr{X}$ is at least $n$-connected as well.  Theorem~\ref{thm:relativeHurewicz} implies that $\bpi_{n+1}(\Omega^i\Sigma^i \mathscr{X})$ coincides with $\H_{n+1}^{\aone}(\mathscr{X})$.  With that in mind, when $n = 0$, the conclusion of the theorem is that the map $\bpi_1(\mathscr{X}) \to \H_1^{\aone}(\mathscr{X})$ is an epimorphism, which follows immediately from Theorem~\ref{thm:relativeHurewicz} (this particular case also is contained in \cite[Theorem 1.4]{HogadiChoudhury}).  
	
	We now assume that $n > 0$.  In that case, write $\mathscr{X'}$ for $k$-fold suspension of $\mathscr{X}$ in $\Shv_{\Nis}(\Sm_k)$.  The map $\mathscr{X} \to \Omega^i \mathscr{X}'$ in $\Shv_{\Nis}(\Sm_k)$ has $2n$-connected fibers by the classical Freudenthal suspension theorem applied stalkwise.  In other words, there is a fiber sequence in $\Shv_{\Nis}(\Sm_k)$ of the form
	\[
	\mathscr{F} \longrightarrow \mathscr{X} \longrightarrow \Omega^i\mathscr{X}'
	\]     
	with $\mathscr{F}$ $2n$-connected.  In particular, we conclude that $\bpi_j(\Omega^i \mathscr{X}')$ is strongly $\aone$-invariant for $j \leq 2n$.  Since $n \geq 1$, we conclude from \cite[Theorem 2.3.3]{AWW} that the canonical map $\mathrm{L}_{mot}\mathscr{F} \to \mathrm{L}_{mot}(\fib(\mathscr{X} \to \Omega^i \mathscr{X}'))$ is an equivalence.  On the other hand, repeated application of \cite[Theorem 2.4.1]{AWW} implies that $\mathrm{L}_{mot} \Omega^i \mathscr{X}' \to \Omega^i \mathrm{L}_{mot}\mathscr{X}'$ is an equivalence as well.  Combining the observations above, there is thus a fiber sequence of the form
	\[
	\mathrm{L}_{mot}\mathscr{F} \longrightarrow \mathscr{X} \longrightarrow \Omega^i\Sigma^i \mathscr{X}.
	\]
	The unstable connectivity theorem implies $\mathrm{L}_{mot}\mathscr{F}$ is $2n$-connected, which concludes the verification of Point(1).  Point (2) follows immediately from Point (1) by passage to the limit.
\end{proof}

\begin{ex}
	 Assume $\mathbf{G}$ is a connected group (e.g., $SL_2$).  In that case, the identity map on $\mathbf{G}$ factors through a map $\mathbf{G} \to \Omega \Sigma \mathbf{G} \to \mathbf{G}$, which shows that $\mathbf{G}$ is a retract of $\Omega \Sigma \mathbf{G}$.  The other factor can be described explicitly as follows.  Applying Lemma~\ref{lem:ganea} to the fiber sequence $\mathbf{G} \to \ast \to B\mathbf{G}$, one concludes the existence of a fiber sequence of the form
	\[
	\Sigma \mathbf{G} \wedge \mathbf{G} \longrightarrow \Sigma \mathbf{G} \longrightarrow B\mathbf{G},
	\]
	where the map $\Sigma \mathbf{G} \wedge \mathbf{G} \to \Sigma \mathbf{G}$ is the Hopf construction of the multiplication on $\mathbf{G}$ \cite[p. 191]{MField}.  The retraction map described in the first paragraph provides a section of the corresponding fibration after looping and we conclude that 
	\[
	\Omega \Sigma \mathbf{G} \cong \Omega \Sigma (\mathbf{G} \wedge \mathbf{G}) \times \mathbf{G}
	\]
	It follows that the fiber of $\mathbf{G} \to \Omega \Sigma \mathbf{G}$ can be identified with $\Omega^2 \Sigma \mathbf{G} \wedge \mathbf{G}$.  Since $\mathbf{G}$ is a connected group, the map $\bpi_1(\mathbf{G}) \to \bpi_1(\Omega \Sigma \mathbf{G})$ is an isomorphism by the Hurewicz theorem \ref{thm:relativeHurewicz}.  On the other hand $\bpi_1(\Omega^2\Sigma \mathbf{G} \wedge \mathbf{G})$ is non-trivial in general (e.g., for $SL_2$).  One concludes that connectivity assertion in  Theorem~\ref{thm:refinedsimplicialsuspension} is optimal for $n = 0$.  For $n \geq 1$, we refer the reader to \cite[Theorem 3.2.1]{AWW}.
\end{ex}

\section{Abelianization and $\aone$-lower central series}
Theorem~\ref{thm:sinfty} and Proposition~\ref{prop:hogadichoudhury-RENAMED} allow us to build appropriate analogs of functorial constructions from classical group theory.  To illustrate this, we discuss abelianization and lower central series for strongly $\aone$-invariant sheaves of groups.  Unfortunately, all of these notions are only provably well-behaved for very strongly $\aone$-invariant sheaves of groups in the sense of Definition~\ref{defn:verystronglya1invariant}.

\begin{lem}
	\label{lem:abelianization}
	Assuming $k$ a field, the following statements hold.
	\begin{enumerate}[noitemsep,topsep=1pt]
		\item The inclusion functor $\Ab^{\aone}_k \hookrightarrow \Grp^{\aone}_k$ admits a left adjoint $(-)^{ab}_{\aone}$ (which is thus right exact).
		\item If $k$ is furthermore perfect and $\mathbf{G}$ is a very strongly $\aone$-invariant sheaf, then the unit of the adjunction $\mathbf{G} \to \mathbf{G}^{ab}_{\aone}$ is an epimorphism.
		In fact $\mathbf{G}^{ab}_{\aone} \simeq \mathbf{S}^{\infty} \mathbf{G}^{ab}$.
	\end{enumerate}
\end{lem}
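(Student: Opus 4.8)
The plan is to build $(-)^{ab}_{\aone}$ as a two-stage localization: first the ordinary (Nisnevich-sheaf) abelianization, then $\aone$-localization via $\mathbf{S}^{\infty}$, thereby illustrating the use of Theorem~\ref{thm:sinfty} and Proposition~\ref{prop:hogadichoudhury}. For Part (1), given $\mathbf{G}\in\Grp^{\aone}_k$, I would let $\mathbf{G}^{ab}$ be the Nisnevich sheafification of $U\mapsto \mathbf{G}(U)/[\mathbf{G}(U),\mathbf{G}(U)]$; this is a sheaf of abelian groups receiving an epimorphism $\mathbf{G}\twoheadrightarrow\mathbf{G}^{ab}$, and it is the initial abelian-sheaf quotient of $\mathbf{G}$. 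I would then set $\mathbf{G}^{ab}_{\aone}:=\mathbf{S}^{\infty}(\mathbf{G}^{ab})$. By Theorem~\ref{thm:sinfty}(1) the functor $\mathbf{S}^{\infty}$ preserves finite products, so it carries the abelian group object $\mathbf{G}^{ab}$ to an abelian group object whose unit map is a homomorphism of abelian sheaves, and by Proposition~\ref{prop:hogadichoudhury}(2) (applied to the quotient $\mathbf{G}\twoheadrightarrow\mathbf{G}^{ab}$ of the strongly $\aone$-invariant $\mathbf{G}$) the sheaf $\mathbf{S}^{\infty}\mathbf{G}^{ab}$ is strongly $\aone$-invariant. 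The adjunction is then checked directly: a homomorphism $\phi\colon\mathbf{G}\to\mathbf{A}$ to a strictly $\aone$-invariant sheaf $\mathbf{A}$ (in particular abelian, and $\aone$-invariant as a sheaf of sets) factors uniquely through $\mathbf{G}^{ab}$, and then through $\mathbf{S}^{\infty}\mathbf{G}^{ab}$ by Theorem~\ref{thm:sinfty}(3); the resulting map $\mathbf{S}^{\infty}\mathbf{G}^{ab}\to\mathbf{A}$ is automatically a homomorphism, because the two maps $\mathbf{S}^{\infty}\mathbf{G}^{ab}\times\mathbf{S}^{\infty}\mathbf{G}^{ab}\cong\mathbf{S}^{\infty}(\mathbf{G}^{ab}\times\mathbf{G}^{ab})\to\mathbf{A}$ measuring its possible failure to be multiplicative agree after restriction along the unit $\mathbf{G}^{ab}\times\mathbf{G}^{ab}\to\mathbf{S}^{\infty}(\mathbf{G}^{ab}\times\mathbf{G}^{ab})$, hence agree by Theorem~\ref{thm:sinfty}(3) again.

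This exhibits $\mathbf{S}^{\infty}\mathbf{G}^{ab}$ as the initial strongly $\aone$-invariant sheaf of abelian groups under $\mathbf{G}$. When $k$ is perfect it already lies in $\Ab^{\aone}_k$, since Morel's theorem identifies strongly and strictly $\aone$-invariant sheaves of abelian groups, so one may take $\mathbf{G}^{ab}_{\aone}=\mathbf{S}^{\infty}\mathbf{G}^{ab}$; for a general field one composes with the reflection of strongly $\aone$-invariant abelian sheaves onto $\Ab^{\aone}_k$, which exists by the adjoint functor theorem since both are presentable and the inclusion preserves limits and filtered colimits (Nisnevich cohomology commutes with the relevant products and filtered colimits). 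In either case $(-)^{ab}_{\aone}$ is a left adjoint, hence preserves colimits and in particular is right exact. For Part (2), assume $k$ perfect, so that $\mathbf{G}^{ab}_{\aone}=\mathbf{S}^{\infty}\mathbf{G}^{ab}$ and the unit of the adjunction is the composite $\mathbf{G}\twoheadrightarrow\mathbf{G}^{ab}\to\mathbf{S}^{\infty}\mathbf{G}^{ab}$. The first arrow is an epimorphism by construction, and the second is an epimorphism by Theorem~\ref{thm:sinfty}(2) applied to the discrete sheaf $\mathbf{G}^{ab}$: since $\bpi_0^{\Nis}(\mathbf{G}^{ab})=\mathbf{G}^{ab}$, that statement says exactly that $\mathbf{G}^{ab}\to\mathbf{S}^{\infty}\mathbf{G}^{ab}$ is an epimorphism. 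A composite of epimorphisms is an epimorphism, which gives the claim.

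The content here is modest, and the step that requires genuine care is the interaction of $\mathbf{S}^{\infty}$ --- a priori a construction on sheaves of sets --- with the ambient group structure: it is precisely Theorem~\ref{thm:sinfty}(1) that lets one promote the set-level factorization produced by Theorem~\ref{thm:sinfty}(3) to a homomorphism of sheaves of abelian groups, and this is the only place where something beyond bookkeeping is needed. A secondary point, invisible when $k$ is perfect, is that over an imperfect field strong and strict $\aone$-invariance are not known to coincide for abelian sheaves, so the explicit identification $\mathbf{G}^{ab}_{\aone}\cong\mathbf{S}^{\infty}\mathbf{G}^{ab}$ --- and hence the argument for Part (2) as written --- genuinely uses the perfectness hypothesis; for Part (1) in full generality, the two-stage construction above (or, more cheaply, the adjoint functor theorem applied to the limit-preserving inclusion $\Ab^{\aone}_k\hookrightarrow\Grp^{\aone}_k$) suffices.
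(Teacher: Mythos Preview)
Your proof is correct and follows essentially the same route as the paper's. For Part~(2) the arguments are identical once one notes that your $\mathbf{G}^{ab}$ is the paper's $\H_1(B\mathbf{G})$ (by the ordinary Hurewicz theorem in $\Shv_{\Nis}(\Sm_k)$): both of you take the sheaf abelianization, apply $\mathbf{S}^{\infty}$, and invoke Theorem~\ref{thm:sinfty}(2) and Proposition~\ref{prop:hogadichoudhury} to see that the composite is an epimorphism onto a strongly (hence, over a perfect field, strictly) $\aone$-invariant abelian sheaf satisfying the right universal property.

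The only real difference is in Part~(1): the paper dispatches existence in one sentence by citing Morel's stable connectivity theorem (implicitly identifying the left adjoint with $\H_1^{\aone}(B(-))$), whereas you build it directly as $\mathbf{S}^{\infty}\circ(-)^{ab}$ and verify the universal property by hand, using product-preservation of $\mathbf{S}^{\infty}$ to upgrade the set-level factorization to a group homomorphism. Your version is more self-contained and makes the role of Theorem~\ref{thm:sinfty} and Proposition~\ref{prop:hogadichoudhury} transparent; the paper's has the advantage of immediately giving the identification $\mathbf{G}^{ab}_{\aone}\cong\H_1^{\aone}(B\mathbf{G})$, which is used later. Your handling of the imperfect case (falling back on the adjoint functor theorem for the further reflection into $\Ab^{\aone}_k$) is a reasonable way to fill a gap the paper leaves implicit.
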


\begin{proof}
	The existence of an adjoint is a consequence of Morel's stable connectivity theorem.  We establish the second point by analyzing universal properties; to this end, write $\Grp_k$ for the category of Nisnevich sheaves of groups on $\Sm_k$, and $\Ab_k$ for the category of Nisnevich sheaves of abelian groups on $\Sm_k$.  Suppose $\mathbf{G}$ is a very strongly $\aone$-invariant sheaf of groups.  
	
	If $\mathbf{A}$ is a strongly $\aone$-invariant abelian sheaf of groups, then writing $\mathbf{G}^{ab}$ for abelianization as a Nisnevich sheaf of groups, there is a bijection
	\[
	\hom_{\Grp^{\aone}_k}(\mathbf{G},\mathbf{A}) \cong \hom_{\Ab_k}(\mathbf{G}^{ab},\mathbf{A}),
	\]  
	by the universal property of abelianization.  On the other hand, since $\mathbf{A}$ is $\aone$-invariant, we deduce from Theorem~\ref{thm:sinfty} that there is a bijection:
	\[
	\hom_{\Ab_k}(\mathbf{G}^{ab},\mathbf{A}) \cong \hom_{\Ab_k}(\mathbf{S}^\infty\mathbf{G}^{ab},\mathbf{A}).
	\]
	Proposition~\ref{prop:hogadichoudhury-RENAMED} implies that $\mathbf{S}^\infty\mathbf{G}^{ab}$ is strongly $\aone$-invariant, and since it is abelian it is thus strictly $\aone$-invariant.  We then conclude that
	\[
	\hom_{\Grp^{\aone}_k}(\mathbf{G},\mathbf{A}) \cong \hom_{\Ab^{\aone}_k}(\mathbf{S}^\infty\mathbf{G}^{ab},\mathbf{A}),
	\]
	i.e.., that $\mathbf{S}^\infty\mathbf{G}^{ab}$ and $\mathbf{G}^{ab}_{\aone}$ satisfy the same universal property, which is what we wanted to show.
\end{proof}

\begin{rem}
	\label{rem:firsthomologyasabelianization}
	If $k$ is a field and $\mathbf{G}$ is a strongly $\aone$-invariant sheaf of groups, then $\mathbf{G}^{ab}_{\aone} \cong \H_1^{\aone}(B\mathbf{G})$ by Morel's Hurewicz Theorem ~\cite[Theorem 6.35]{MField}.  If $k$ is perfect field and $\mathbf{G}$ is very strongly $\aone$-invariant, then $\H_1^{\aone}(B\mathbf{G}) \cong \mathbf{S}^{\infty}\mathbf{G}^{ab}$ by the preceding result.  
\end{rem}

\begin{rem}
	Analyzing the proof of Lemma~\ref{lem:abelianization} we observe that for any very strongly $\A^1$-invariant sheaf $\mathbf G$ there is always an epimorphism $\mathbf{G}^{ab} \to \mathbf{G}^{ab}_{\aone}$ and that this map is an isomorphism if $\mathbf{G}^{ab}$ happens to be $\aone$-invariant.  We do not know whether $\mathbf{G}^{ab}$ is $\aone$-invariant in general, though this does happen: see Example~\ref{ex:lowervsupper}.
\end{rem}

\begin{ex}[Contraction does not commute with abelianization]
	\label{ex:contractionandabelianization}
	In general, abelianization does not commute with contraction for an $\aone$-nilpotent sheaf of groups.  For example, one knows that $\bpi := \bpi_1(\pone)$ is a central extension of $\gm{}$ by $\K^{MW}_2$ by \cite[Theorem 7.29]{MField}.  In that case, $\H_1^{\aone}(\pone) \cong \K^{MW}_1$ by the suspension isomorphism, i.e., $\bpi^{ab} \cong \K^{MW}_1$.  We know that $(\K^{MW}_1)_{-1} \cong \K^{MW}_0$.  On the other hand, the contraction of $\bpi_1(\pone)$ is already abelian and isomorphic to $\K^{MW}_1 \oplus \Z$ by appeal to \cite[Corollary 7.34]{MField} and therefore coincides with its abelianization.   
\end{ex}

\begin{defn}
	If $\mathbf{G}$ is a very strongly $\aone$-invariant sheaf of groups, then we define $[\mathbf{G},\mathbf{G}]_{\aone}$ to be the kernel of the epimorphism $\mathbf{G} \to \mathbf{G}^{ab}_{\aone}$.
\end{defn}

\begin{rem}
	Assume $k$ is a perfect field.  If $\mathbf{G}$ is very strongly $\aone$-invariant, it is not clear that one can iterate the construction of the $\aone$-commutator subgroup sheaf to define an $\aone$-derived series because $[\mathbf{G},\mathbf{G}]_{\aone}$ is not evidently very strongly $\aone$-invariant.  In particular, it is not clear that analogs of the various equivalent characterizations of solvable groups hold for their motivic variants.  
\end{rem}

The next result is an analog for strictly $\aone$-invariant sheaves of a result of Stallings \cite[Theorem 2.1]{StallingsCS}.

\begin{lem}
	\label{lem:inductivestepLCC}
	Suppose $1 \to \mathbf{N} \to \mathbf{G} \to \mathbf{Q} \to 1$ is a short exact sequence of strongly $\aone$-invariant sheaves of groups.
	\begin{enumerate}[noitemsep,topsep=1pt]
		\item There exists a strictly $\aone$-invariant sheaf $\mathbf{N}/[\mathbf{N},\mathbf{G}]_{\aone}$ under $\mathbf{N}$, which is initial among strictly $\aone$-invariant sheaves under $\mathbf{N}^{ab}$ on which $\mathbf{G}$ acts trivially.
		\item There is an exact sequence of strictly $\aone$-invariant sheaves of the form
		\[
		\H_2^{\aone}(B\mathbf{G}) \longrightarrow \H_2^{\aone}(B\mathbf{Q}) \longrightarrow \mathbf{N}/[\mathbf{N},\mathbf{G}]_{\aone} \longrightarrow \mathbf{G}^{ab}_{\aone} \longrightarrow \mathbf{Q}^{ab}_{\aone} \longrightarrow 0.
		\]
		\item The exact sequence of the previous point is functorial in morphisms of short exact sequences.
		\item If $\mathbf N$ is very strongly $\A^1$-invariant then $\mathbf{N}/[\mathbf{N},\mathbf{G}]_{\aone} \simeq \mathbf{S}^{\infty}\mathbf{N}/[\mathbf{N},\mathbf{G}]$.
	\end{enumerate}
\end{lem}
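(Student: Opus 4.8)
The plan is to deduce the entire statement from the relative Hurewicz theorem (Theorem~\ref{thm:relativeHurewicz}) applied to the fibration of classifying spaces attached to the given short exact sequence, supplemented by the long exact sequence in $\aone$-homology of a cofiber sequence. To set things up: since $\mathbf{N}$, $\mathbf{G}$, $\mathbf{Q}$ are strongly $\aone$-invariant, the spaces $B\mathbf{N}$, $B\mathbf{G}$, $B\mathbf{Q}$ are motivic local, and because motivic local spaces are closed under limits inside $\Shv_{\Nis}(\Sm_k)$, the Nisnevich-local fiber sequence attached to $1 \to \mathbf{N} \to \mathbf{G} \to \mathbf{Q} \to 1$ is also a fiber sequence
\[
B\mathbf{N} \stackrel{\iota}{\longrightarrow} B\mathbf{G} \stackrel{f}{\longrightarrow} B\mathbf{Q}
\]
in $\ho{k}$; here $\bpi_1(B\mathbf{N}) = \mathbf{N}$ and $\bpi_1(B\mathbf{G}) = \mathbf{G}$ because the classifying spaces are motivic local. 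As $B\mathbf{N}$ is connected, $f$ has $0$-connected fibers, so Theorem~\ref{thm:relativeHurewicz} (with $n = 1$) shows that $\cof(f)$ is $1$-connected and that the relative Hurewicz map $\mathbf{N} \to \H_2^{\aone}(\cof(f))$ is the initial morphism from $\mathbf{N}$ to a strictly $\aone$-invariant sheaf on which $\mathbf{G}$ acts trivially. (As always we use that $k$ is perfect, or that the data descends to a perfect subfield.)

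For (1), we \emph{define} $\mathbf{N}/[\mathbf{N},\mathbf{G}]_{\aone} := \H_2^{\aone}(\cof(f))$, with $[\mathbf{N},\mathbf{G}]_{\aone}$ the kernel of the Hurewicz map above. This map is an epimorphism: its image is a subsheaf of a strictly $\aone$-invariant sheaf, hence strictly $\aone$-invariant since $k$ is perfect, and it carries the trivial $\mathbf{G}$-action, so the universal property forces it to be everything (factor the identity of $\H_2^{\aone}(\cof(f))$ through the image and use uniqueness). As the target is abelian and $\aone$-invariant, the map factors through $\mathbf{N}^{ab}$ and, by Lemma~\ref{lem:abelianization}, through $\mathbf{N}^{ab}_{\aone}$; since strictly $\aone$-invariant quotients of $\mathbf{N}$, of $\mathbf{N}^{ab}$, and of $\mathbf{N}^{ab}_{\aone}$ all coincide, the initiality clause in Theorem~\ref{thm:relativeHurewicz} is precisely the asserted initiality among strictly $\aone$-invariant quotients of $\mathbf{N}^{ab}$ with trivial $\mathbf{G}$-action.

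For (2) and (3), apply $\aone$-homology to the cofiber sequence $B\mathbf{G} \stackrel{f}{\to} B\mathbf{Q} \to \cof(f)$, obtaining the exact sequence
\[
\H_2^{\aone}(B\mathbf{G}) \to \H_2^{\aone}(B\mathbf{Q}) \to \H_2^{\aone}(\cof(f)) \stackrel{\partial}{\to} \H_1^{\aone}(B\mathbf{G}) \to \H_1^{\aone}(B\mathbf{Q}) \to \H_1^{\aone}(\cof(f)).
\]
Now $\H_1^{\aone}(\cof(f)) = 0$ because $\cof(f)$ is $1$-connected; $\H_1^{\aone}(B\mathbf{G}) = \mathbf{G}^{ab}_{\aone}$ and $\H_1^{\aone}(B\mathbf{Q}) = \mathbf{Q}^{ab}_{\aone}$ by (the proof of) Lemma~\ref{lem:abelianization}; and $\H_2^{\aone}(\cof(f)) = \mathbf{N}/[\mathbf{N},\mathbf{G}]_{\aone}$ by (1). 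This is the five-term sequence. To recognise $\partial$ as the map induced by $\iota$, use the comparison map $e\colon \Sigma B\mathbf{N} \to \cof(f)$ adjoint to $B\mathbf{N} = \fib(f) \to \Omega\cof(f)$: the standard identity $\delta \circ e \simeq \Sigma\iota$, where $\delta\colon \cof(f) \to \Sigma B\mathbf{G}$ is the Puppe boundary, together with the factorization of the relative Hurewicz map through $e$, shows that $\partial$ agrees with the map $\mathbf{N}/[\mathbf{N},\mathbf{G}]_{\aone} \to \mathbf{G}^{ab}_{\aone}$ induced by $\mathbf{N} \hookrightarrow \mathbf{G}$ (the last map being an epimorphism, this pins $\partial$ down). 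Finally, a morphism of short exact sequences induces a morphism of the fiber sequences $B\mathbf{N}_i \to B\mathbf{G}_i \to B\mathbf{Q}_i$, hence of the cofiber sequences and of their $\aone$-homology long exact sequences; all the identifications used (the relative Hurewicz map, $\H_1^{\aone}(B(-)) = (-)^{ab}_{\aone}$, and the vanishing $\H_1^{\aone}(\cof(-)) = 0$) are natural, which gives (3).

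The main obstacle is the last refinement: identifying the connecting map $\partial$ with the inclusion-induced map on abelianizations requires the compatibility of the relative Hurewicz map of Theorem~\ref{thm:relativeHurewicz} with the Puppe sequence of the cofiber sequence, i.e.\ the relation $\delta \circ e \simeq \Sigma\iota$ and its interaction with the $\aone$-homology long exact sequence. Everything else is either definitional --- (1) is essentially a repackaging of Theorem~\ref{thm:relativeHurewicz} --- or formal, namely the existence and functoriality of the $\aone$-homology long exact sequence of a cofiber sequence of pointed motivic spaces. It is worth noting that no Serre-type spectral sequence enters: the relative Hurewicz theorem plays the role that the Lyndon--Hochschild--Serre spectral sequence plays in the classical derivation of Stallings' five-term sequence.
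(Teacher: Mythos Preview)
Your argument is correct and follows essentially the same route as the paper: form the fiber sequence $B\mathbf{N} \to B\mathbf{G} \xrightarrow{f} B\mathbf{Q}$, set $\mathscr{C}=\cof(f)$, identify $\H_2^{\aone}(\mathscr{C})$ via the relative Hurewicz theorem as the desired universal quotient, and read off the five-term sequence from the long exact $\aone$-homology sequence of the cofiber sequence; functoriality is then automatic. One small wrinkle: your justification that the Hurewicz map $\mathbf{N} \to \H_2^{\aone}(\mathscr{C})$ is surjective (``its image is a subsheaf of a strictly $\aone$-invariant sheaf, hence strictly $\aone$-invariant'') is not quite right as stated, since arbitrary subsheaves of strictly $\aone$-invariant sheaves need not be $\aone$-invariant. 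What is true (and what the paper invokes elsewhere) is that the image of a morphism between \emph{strongly} $\aone$-invariant sheaves is strongly $\aone$-invariant; since the image here is abelian and $k$ is perfect, it is then strictly $\aone$-invariant, and your universal-property trick goes through. The paper obtains surjectivity differently, via Proposition~\ref{prop:blakersmassey}(2): the comparison $\Sigma B\mathbf{N} \to \mathscr{C}$ has $1$-connected fibers, hence $2$-connected cofiber, so $\H_2^{\aone}(\Sigma B\mathbf{N}) \cong \mathbf{N}^{ab}_{\aone} \twoheadrightarrow \H_2^{\aone}(\mathscr{C})$.
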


\begin{rem}
If $\mathbf N$ is not very strongly $\A^1$-invariant then it is not clear to us if $\mathbf N \to \mathbf{N}/[\mathbf{N},\mathbf{G}]_{\aone}$ needs to be an epimorphism; the notation may be somewhat misleading in this case.
\end{rem}

\begin{proof}
(1)-(3).
	We establish the three points simultaneously.  Set $\mathcal{C} = \cof(B\mathbf{G} \to B\mathbf{Q})$.  Since $B\mathbf{N}$ is $0$-connected and $B\mathbf{Q}$ is $0$-connected, the canonical map $\Sigma B\mathbf{N} \to \mathscr{C}$ has $1$-connected fibers by Proposition~\ref{prop:blakersmassey}(2).  Since $\Sigma B\mathbf{N}$ is $1$-connected, $\mathscr{C}$ is $1$-connected as well.  The long exact sequence in homology takes the form (use Remark~\ref{rem:firsthomologyasabelianization})
	\[
	\H_2^{\aone}(B\mathbf{G}) \longrightarrow \H_2^{\aone}(B\mathbf{Q}) \longrightarrow \H_2^{\aone}(\mathscr{C}) \longrightarrow \mathbf{G}^{ab}_{\aone} \longrightarrow \mathbf{Q}^{ab}_{\aone} \longrightarrow 0,
	\]
	and moreover the map $\H_2^{\aone}(\Sigma B\mathbf{N}) \to \H_2^{\aone}(\mathscr{C})$ is an epimorphism.  
	
	Morel's Hurewicz theorem \cite[Theorem 6.37]{MField} implies that there is an isomorphism $\H_2^{\aone}(\Sigma B\mathbf{N}) \cong \H_1^{\aone}(B\mathbf{N}) \cong \mathbf{N}^{ab}_{\aone}$.  The relative Hurewicz theorem~\ref{thm:relativeHurewicz} implies that $\H_2^{\aone}(\mathscr{C})$ is the initial strictly $\aone$-invariant sheaf under $\mathbf{N}$ on which $\mathbf{G}$ acts trivially.  Combining the points above, we write $\mathbf{N}/[\mathbf{N},\mathbf{G}]_{\aone}$ for this sheaf.

(4). As in the proof of Lemma \ref{lem:abelianization}, it follows from Theorem \ref{thm:sinfty} that $\mathbf{S}^{\infty}\mathbf{N}/[\mathbf{N},\mathbf{G}]$ is the initial $\A^1$-invariant sheaf under $\mathbf N$ with a trivial action by $\mathbf G$, and this sheaf is strongly (hence strictly) $\A^1$-invariant by Proposition \ref{prop:hogadichoudhury-RENAMED}.  It thus coincides with $\mathbf{N}/[\mathbf{N},\mathbf{G}]_{\aone}$ by comparison of universal properties.
\end{proof}

\begin{construction}
	If $\mathbf{N}$ is a very strongly $\A^1$-invariant normal subgroup sheaf of $\mathbf{G}$ fitting into a short exact sequence $1 \to \mathbf{N} \to \mathbf{G} \to \mathbf{Q} \to 1$,
	we define $[\mathbf{N},\mathbf{G}]_{\aone}$ as the kernel of the epimorphism
	\[
	\mathbf{N} \longrightarrow \mathbf{N}/[\mathbf{N},\mathbf{G}]_{\aone}.
	\]
	Note that $[\mathbf{N},\mathbf{G}]_{\aone}$ is strongly $\aone$-invariant  by \cite[Lemma 3.1.14]{AFHLocalization}(1).
\end{construction}

To iterate this construction and define an $\A^1$-lower central series, we need to know that $[\mathbf{N}, \mathbf G]_{\aone}$ is again very strongly $\aone$-invariant.  This is automatic if $\mathbf G$ is locally nilpotent, by Remark \ref{rmk:extremely-strongly-aone-inv}.  We restrict our definition to this case.

\begin{defn}
	\label{defn:lowercentralseries}
	Let $\mathbf{G}$ be a strongly $\aone$-invariant, locally nilpotent sheaf of groups.
	The $\aone$-lower central series $\Gamma^i_{\aone}\mathbf{G}$, $i \geq 1$ is defined inductively by setting $\Gamma^1_{\aone}\mathbf{G} = \mathbf{G}$ and $\Gamma^{i}_{\aone}\mathbf{G} := [\Gamma_{\aone}^i \mathbf{G},\mathbf{G}]_{\aone}$.
\end{defn}

\begin{proposition}
	\label{prop:functoriallowercentralseries}
	The following statements hold.
	\begin{enumerate}[noitemsep,topsep=1pt]	
		\item The $\aone$-lower central series is functorial, i.e., if $f: \mathbf{G} \to \mathbf{G}'$ is a morphism of strongly $\aone$-invariant sheaves of groups, there are induced morphisms $\Gamma^i_{\aone}(f): \Gamma^i_{\aone}\mathbf{G} \to \Gamma^i_{\aone}\mathbf{G}'$ for all $i \geq 0$.
		\item A strongly $\aone$-invariant sheaf of groups $\mathbf{G}$ is $\aone$-nilpotent of nilpotence class $\leq c$ if and only if the $\aone$-lower central series has length $c$.
	\end{enumerate}
\end{proposition}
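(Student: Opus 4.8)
The plan is to establish both points by induction on $i$, leveraging the inductive definition $\Gamma^{i+1}_{\aone}\mathbf{G} = [\Gamma^i_{\aone}\mathbf{G}, \mathbf{G}]_{\aone}$ together with the functoriality statement in Lemma~\ref{lem:inductivestepLCC}(3). For Point (1), I would argue as follows. The base case $i = 1$ is trivial since $\Gamma^1_{\aone}\mathbf{G} = \mathbf{G}$. For the inductive step, suppose we have constructed $\Gamma^i_{\aone}(f): \Gamma^i_{\aone}\mathbf{G} \to \Gamma^i_{\aone}\mathbf{G}'$. The key observation is that a morphism $f: \mathbf{G} \to \mathbf{G}'$ together with $\Gamma^i_{\aone}(f)$ induces a morphism of short exact sequences
\[
\xymatrix{
1 \ar[r] & \Gamma^{i+1}_{\aone}\mathbf{G} \ar[r]\ar[d] & \Gamma^i_{\aone}\mathbf{G} \ar[r]\ar[d]^{\Gamma^i_{\aone}(f)} & \Gamma^i_{\aone}\mathbf{G}/\Gamma^{i+1}_{\aone}\mathbf{G} \ar[r]\ar@{-->}[d] & 1 \\
1 \ar[r] & \Gamma^{i+1}_{\aone}\mathbf{G}' \ar[r] & \Gamma^i_{\aone}\mathbf{G}' \ar[r] & \Gamma^i_{\aone}\mathbf{G}'/\Gamma^{i+1}_{\aone}\mathbf{G}' \ar[r] & 1
}
\]
provided $f$ carries $\Gamma^i_{\aone}\mathbf{G}$ into a subsheaf mapping appropriately; but the cleanest route is to observe that the relevant short exact sequence is $1 \to \Gamma^{i+1}_{\aone}\mathbf{G} \to \Gamma^i_{\aone}\mathbf{G} \to \mathbf{N}/[\mathbf{N},\mathbf{G}]_{\aone} \to 1$ with $\mathbf{N} = \Gamma^i_{\aone}\mathbf{G}$, and then invoke the functoriality of the entire five-term exact sequence of Lemma~\ref{lem:inductivestepLCC}(3). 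Since $\mathbf{N}/[\mathbf{N},\mathbf{G}]_{\aone}$ is characterized by a universal property (initial strictly $\aone$-invariant quotient of $\mathbf{N}^{ab}$ on which $\mathbf{G}$ acts trivially), naturality of that universal property with respect to $\Gamma^i_{\aone}(f)$ and $f$ yields the induced map on quotients, hence by taking kernels the induced map $\Gamma^{i+1}_{\aone}(f): \Gamma^{i+1}_{\aone}\mathbf{G} \to \Gamma^{i+1}_{\aone}\mathbf{G}'$.

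For Point (2), I would unwind the definition of $\aone$-nilpotence: a strongly $\aone$-invariant $\mathbf{G}$ is $\aone$-nilpotent of class $\leq c$ precisely when there is a finite central series $\mathbf{G} = \mathbf{G}_1 \supseteq \mathbf{G}_2 \supseteq \cdots \supseteq \mathbf{G}_{c+1} = 1$ of strongly $\aone$-invariant subsheaves with each $\mathbf{G}_i/\mathbf{G}_{i+1}$ central (strictly $\aone$-invariant) in $\mathbf{G}/\mathbf{G}_{i+1}$. One direction is immediate: the $\aone$-lower central series $\{\Gamma^i_{\aone}\mathbf{G}\}$ is itself such a central series (the quotients $\Gamma^i_{\aone}\mathbf{G}/\Gamma^{i+1}_{\aone}\mathbf{G}$ are strictly $\aone$-invariant and central by construction in Lemma~\ref{lem:inductivestepLCC}), so if it has length $c$ then $\mathbf{G}$ is $\aone$-nilpotent of class $\leq c$. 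For the converse, I would show by induction that the $\aone$-lower central series is the \emph{fastest descending} such series: if $\mathbf{G} = \mathbf{G}_1 \supseteq \cdots$ is any $\aone$-central series, then $\Gamma^i_{\aone}\mathbf{G} \subseteq \mathbf{G}_i$ for all $i$. Indeed, assuming $\Gamma^i_{\aone}\mathbf{G} \subseteq \mathbf{G}_i$, the fact that $\mathbf{G}/\mathbf{G}_{i+1}$ receives a map from $\Gamma^i_{\aone}\mathbf{G}/(\Gamma^i_{\aone}\mathbf{G} \cap \mathbf{G}_{i+1})$ on which $\mathbf{G}$ acts trivially, combined with the universal property defining $\Gamma^{i+1}_{\aone}\mathbf{G}$ as the kernel of the map to the \emph{initial} such trivial-action quotient, forces $\Gamma^{i+1}_{\aone}\mathbf{G}$ to lie in $\mathbf{G}_{i+1}$. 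Hence if some central series terminates at step $c+1$, so does the lower central series.

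The main obstacle is handling the universal-property bookkeeping in the converse of Point (2) with genuine care, since ``$\mathbf{G}$ acts trivially on the quotient'' must be checked sheaf-theoretically and one must confirm that intersecting with $\mathbf{G}_{i+1}$ and passing to quotients stays within the world of strongly $\aone$-invariant sheaves of groups (which is where Proposition~\ref{prop:2outof3forstrongaoneinvariance} and Proposition~\ref{prop:hogadichoudhury}(1) are needed to ensure the subquotients remain in $\Grp^{\aone}_k$). A secondary subtlety is that a priori ``central series'' in the $\aone$-nilpotence definition might be formulated in terms of the action on higher homotopy sheaves rather than abstractly, so I would first reconcile the definition of $\aone$-nilpotent sheaf of groups (from \cite[Definition 3.2.1]{AFHLocalization} as recalled in the introduction) with the existence of an abstract $\aone$-central series, and only then run the comparison argument above. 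Once the definitional matching is in hand, both the functoriality and the characterization follow by the straightforward inductions sketched.
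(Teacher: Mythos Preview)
Your proposal is correct and follows essentially the same approach as the paper: the paper also derives functoriality from Lemma~\ref{lem:inductivestepLCC}(3) (together with functoriality of abelianization), and for Point~(2) simply remarks that the argument is ``exactly as in the classical setting''---namely, the $\aone$-lower central series is an $\aone$-central series in the sense of \cite[Definition~3.2.1]{AFHLocalization} and is the one of minimal length, which is precisely your fastest-descending argument. Your version is more detailed (and correctly flags the places where Proposition~\ref{prop:2outof3forstrongaoneinvariance} keeps everything in $\Grp^{\aone}_k$), but the underlying strategy is identical.
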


\begin{proof}
	For the first statement, observe that abelianization is functorial by Lemma~\ref{lem:abelianization} while the lower central series is built inductively by appeal to the exact sequence of Lemma~\ref{lem:inductivestepLCC}(2), which is functorial by Lemma~\ref{lem:inductivestepLCC}(3).  The second statement is established exactly as in the classical setting: an $\aone$-lower central series is an $\aone$-central series in the sense of \cite[Definition 3.2.1]{AFHLocalization}, and the $\aone$-lower central series is the $\aone$-central series of minimal length.
\end{proof}

\begin{ex}
	\label{ex:lowervsupper}
	The $\aone$-lower central series of $\bpi_1^{\aone}(\pone)$ differs from the upper central series (see \cite[Proposition 3.1.22]{AFHLocalization} for discussion of the latter).  Indeed, $\bpi_1(\pone)$ has center $\K^{MW}_2$ and $\aone$-nilpotence class $2$.  In contrast, $\H_1^{\aone}(\pone) \cong \K^{MW}_1$ and the epimorphism $\bpi_1(\pone) \to \H_1^{\aone}(\pone)$ is studied in \cite[\S 4]{HogadiChoudhury}: the kernel is the actual commutator subgroup of $\bpi_1(\pone)$.  
\end{ex}

The following result is a motivic analog of a presumably well-known fact about nilpotent groups, though we did not manage to locate a reference.

\begin{proposition}
	\label{prop:nilpotent-epi}
Let $\mathbf G' \to \mathbf G$ be a morphism of strongly $\A^1$-invariant sheaves of groups, with $\mathbf G$ locally $\A^1$-nilpotent\tom{Note that locally nilpotent + strongly $\A^1$-invariant does not seem to imply locally $\A^1$-nilpotent...} and $\mathbf G'$ very strongly $\A^1$-invariant.
If $\mathbf G'^{ab}_{\A^1} \to \mathbf G^{ab}_{\A^1}$ is surjective, then so is $\mathbf G' \to \mathbf G$.
\end{proposition}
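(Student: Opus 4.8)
The plan is to track how $f\colon\mathbf G'\to\mathbf G$ acts on the $\A^1$-lower central series, and to prove by induction on $i\ge 1$ that $f$ induces an epimorphism $\mathbf G'/\Gamma^i_{\A^1}\mathbf G'\to\mathbf G/\Gamma^i_{\A^1}\mathbf G$. Granting this, if $\mathbf G$ is $\A^1$-nilpotent of class $\le c$ then $\Gamma^{c+1}_{\A^1}\mathbf G=1$ by Proposition~\ref{prop:functoriallowercentralseries}(2), so the case $i=c+1$ yields an epimorphism $\mathbf G'/\Gamma^{c+1}_{\A^1}\mathbf G'\to\mathbf G$; composing with the (always epimorphic) projection $\mathbf G'\to\mathbf G'/\Gamma^{c+1}_{\A^1}\mathbf G'$ recovers $f$ and exhibits it as a composite of epimorphisms, hence surjective. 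The general locally nilpotent case I would reduce to this one.

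For the inductive statement I would establish simultaneously, by induction on $i\ge 1$: (A$_i$) the map on graded pieces $\Gamma^i_{\A^1}\mathbf G'/\Gamma^{i+1}_{\A^1}\mathbf G'\to\Gamma^i_{\A^1}\mathbf G/\Gamma^{i+1}_{\A^1}\mathbf G$ is an epimorphism; (B$_i$) the map $\mathbf G'/\Gamma^{i+1}_{\A^1}\mathbf G'\to\mathbf G/\Gamma^{i+1}_{\A^1}\mathbf G$ is an epimorphism. The base case $i=1$ of both is exactly the hypothesis, since $\Gamma^1_{\A^1}\mathbf H/\Gamma^2_{\A^1}\mathbf H=\mathbf H/[\mathbf H,\mathbf H]_{\A^1}=\mathbf H^{ab}_{\A^1}$. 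For the step $\text{(B}_i\text{)}$ and $\text{(A}_{i+1}\text{)}\Rightarrow\text{(B}_{i+1}\text{)}$, consider the short exact sequences of Nisnevich sheaves of groups
\[
1\longrightarrow\Gamma^{i+1}_{\A^1}\mathbf H/\Gamma^{i+2}_{\A^1}\mathbf H\longrightarrow\mathbf H/\Gamma^{i+2}_{\A^1}\mathbf H\longrightarrow\mathbf H/\Gamma^{i+1}_{\A^1}\mathbf H\longrightarrow1
\]
for $\mathbf H\in\{\mathbf G',\mathbf G\}$ (legitimate because each $\Gamma^j_{\A^1}\mathbf H$ is normal in $\mathbf H$ and $\Gamma^{i+2}_{\A^1}\mathbf H\subseteq\Gamma^{i+1}_{\A^1}\mathbf H$), which assemble into a morphism of short exact sequences by functoriality of $\Gamma^\bullet_{\A^1}$ (Proposition~\ref{prop:functoriallowercentralseries}(1)). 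The two outer vertical maps are epimorphisms, by (A$_{i+1}$) and (B$_i$), so a diagram chase on Nisnevich stalks shows the middle one is as well.

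The substantive step, and where I expect the main obstacle, is $\text{(A}_i\text{)}\Rightarrow\text{(A}_{i+1}\text{)}$. For this I would produce, naturally in a strongly $\A^1$-invariant sheaf $\mathbf H$, an epimorphism of strictly $\A^1$-invariant sheaves
\[
\mathbf H^{ab}_{\A^1}\otimes\bigl(\Gamma^i_{\A^1}\mathbf H/\Gamma^{i+1}_{\A^1}\mathbf H\bigr)\longtwoheadrightarrow\Gamma^{i+1}_{\A^1}\mathbf H/\Gamma^{i+2}_{\A^1}\mathbf H,
\]
where $\otimes$ is the symmetric monoidal product on $\Ab^{\A^1}_k$. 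Its source is the ordinary commutator pairing: for local sections $g$ of $\mathbf H$ and $z$ of $\Gamma^i_{\A^1}\mathbf H$ one has $[g,z]\in\Gamma^{i+1}_{\A^1}\mathbf H$ (since $\Gamma^{i+1}_{\A^1}\mathbf H\supseteq[\Gamma^i_{\A^1}\mathbf H,\mathbf H]$), and using the standard commutator identities together with the expected $\A^1$-subadditivity estimates $[\Gamma^a_{\A^1}\mathbf H,\Gamma^b_{\A^1}\mathbf H]\subseteq\Gamma^{a+b}_{\A^1}\mathbf H$ one checks that $(g,z)\mapsto\overline{[g,z]}$ is biadditive and trivial on $\Gamma^2_{\A^1}\mathbf H$ in the first variable and on $\Gamma^{i+1}_{\A^1}\mathbf H$ in the second; hence it descends to a biadditive pairing of Nisnevich sheaves out of $\mathbf H^{ab}_{\A^1}\times(\Gamma^i_{\A^1}\mathbf H/\Gamma^{i+1}_{\A^1}\mathbf H)$, and since its target is strictly $\A^1$-invariant (Lemma~\ref{lem:inductivestepLCC}) it factors through the displayed $\otimes$. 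It is an epimorphism because $\Gamma^{i+1}_{\A^1}\mathbf H$ is generated, as a strongly $\A^1$-invariant normal subsheaf of $\mathbf H$, by the ordinary commutators $[\Gamma^i_{\A^1}\mathbf H,\mathbf H]$ — this by unwinding the definition of $[-,-]_{\A^1}$ through Lemma~\ref{lem:inductivestepLCC}, Proposition~\ref{prop:hogadichoudhury} and Theorem~\ref{thm:sinfty} — so the image of the pairing generates its strictly $\A^1$-invariant target. Given this, (A$_{i+1}$) follows at once: by the hypothesis and (A$_i$), both tensor factors for $\mathbf G$ receive epimorphisms from those for $\mathbf G'$, hence so does the tensor product (right exactness of $\otimes$, which holds as the monoidal structure on $\Ab^{\A^1}_k$ is closed), hence so does $\Gamma^{i+1}_{\A^1}\mathbf G/\Gamma^{i+2}_{\A^1}\mathbf G$.

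So the hard part is pinning down and proving this natural epimorphism: concretely, establishing the $\A^1$-subadditivity estimates for $\Gamma^\bullet_{\A^1}$ and the fact that each graded piece $\Gamma^{i+1}_{\A^1}\mathbf H/\Gamma^{i+2}_{\A^1}\mathbf H$, though built from $\A^1$-localized commutator quotients, is still generated by images of ordinary commutators. These are precisely the points where the $\mathbf S^{\infty}$-machinery of Section~\ref{ss:motiviclocalization} has to be combined with classical commutator calculus; the remaining ingredients — the diagram chases and the reduction to the $\A^1$-nilpotent case — are routine.
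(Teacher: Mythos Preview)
Your strategy --- controlling the graded pieces of the $\A^1$-lower central series via a commutator pairing --- is genuinely different from the paper's, and the step you flag as ``the hard part'' is a real gap that the sketch you give does not close.

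The specific problem is the surjectivity of
\[
\mathbf H^{ab}_{\A^1}\otimes\bigl(\Gamma^i_{\A^1}\mathbf H/\Gamma^{i+1}_{\A^1}\mathbf H\bigr)\longrightarrow\Gamma^{i+1}_{\A^1}\mathbf H/\Gamma^{i+2}_{\A^1}\mathbf H.
\]
You justify it by saying that $\Gamma^{i+1}_{\A^1}\mathbf H$ is ``generated, as a strongly $\A^1$-invariant normal subsheaf, by ordinary commutators.'' That is true in the sense that $\Gamma^{i+1}_{\A^1}\mathbf H$ is the \emph{smallest} strongly $\A^1$-invariant normal subsheaf of $\Gamma^i_{\A^1}\mathbf H$ containing $[\Gamma^i_{\A^1}\mathbf H,\mathbf H]$; but that is a minimality statement about kernels, not a statement that elements are expressible through commutators. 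Concretely, the image of your pairing is contained in the image of $[\Gamma^i_{\A^1}\mathbf H,\mathbf H]$ inside $\Gamma^{i+1}_{\A^1}\mathbf H/\Gamma^{i+2}_{\A^1}\mathbf H$, so surjectivity would require $[\Gamma^i_{\A^1}\mathbf H,\mathbf H]\cdot\Gamma^{i+2}_{\A^1}\mathbf H = \Gamma^{i+1}_{\A^1}\mathbf H$. There is no evident reason for $\Gamma^i_{\A^1}\mathbf H\big/\bigl([\Gamma^i_{\A^1}\mathbf H,\mathbf H]\cdot\Gamma^{i+2}_{\A^1}\mathbf H\bigr)$ to be $\A^1$-invariant, so the minimality characterisation of $\Gamma^{i+1}_{\A^1}\mathbf H$ does not force equality. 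The same difficulty undermines the subadditivity estimate $[\Gamma^a_{\A^1}\mathbf H,\Gamma^b_{\A^1}\mathbf H]\subseteq\Gamma^{a+b}_{\A^1}\mathbf H$: the $\A^1$-lower central series terms can strictly contain the classical ones, and the three-subgroups lemma only controls the latter. (You also leave the reduction from locally nilpotent to nilpotent entirely unexplained.)

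The paper sidesteps all of this with a much shorter argument. It first replaces $\mathbf G'$ by its image $\mathbf H\subset\mathbf G$, then works with the subsheaves $\mathbf H_i := \mathbf H\cdot\Gamma^i_{\A^1}\mathbf G$ of $\mathbf G$ (not of $\mathbf G'$). A one-line commutator identity shows $\mathbf H_{i+1}\trianglelefteq\mathbf H_i$, and Proposition~\ref{prop:2outof3forstrongaoneinvariance} shows each $\mathbf H_i$ is strongly $\A^1$-invariant. Given $\mathbf H_n=\mathbf G$, the quotient $\mathbf Q=\mathbf G/\mathbf H_{n+1}$ is locally $\A^1$-nilpotent with $\mathbf Q^{ab}_{\A^1}=0$ (right exactness of $(-)^{ab}_{\A^1}$, plus $\mathbf H\subset\mathbf H_{n+1}$ already surjecting onto $\mathbf G^{ab}_{\A^1}$), hence $\mathbf Q=0$ and $\mathbf H_{n+1}=\mathbf G$. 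Local nilpotency then gives $\mathbf H=\bigcap_i\mathbf H_i=\mathbf G$. No graded-piece analysis, no tensor products, no subadditivity estimates are needed.
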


\begin{proof}
By Lemma \ref{lemm:very-strongly-permanence}(1), we we may replace $\mathbf G'$ by its image $\mathbf H$ in $\mathbf G$.

We begin with a preparatory observation.   Assume $\mathbf K \subset \mathbf G$ is a strongly $\A^1$-invariant subsheaf of groups.  For $i \ge 1$ let $\mathbf K_i \subset \mathbf G$ be the subsheaf generated by $\Gamma^i_{\A^1} \mathbf G$ and $\mathbf K$.  Since $\Gamma^i_{\aone}\mathbf{G}$ is normal in $\mathbf{G}$, there are exact sequences of the form
\[
1 \longrightarrow \Gamma^i_{\aone}\mathbf{G} \longrightarrow \mathbf{K}_i \longrightarrow \mathbf{K}/(\mathbf{K} \cap \Gamma^i_{\A^1} \mathbf G) \longrightarrow 1.
\]
Since $\mathbf{K}/(\mathbf{K} \cap \Gamma^i_{\A^1} \mathbf G)$ is the image of $\mathbf{K}$ under the quotient morphism $\mathbf G \to \mathbf G/\Gamma^i_{\A^1}$, it is strongly $\A^1$-invariant ($\mathbf K$ being very strongly $\A^1$-invariant by Remark \ref{rmk:extremely-strongly-aone-inv}). Thus $\mathbf{K}_i$ is strongly $\aone$-invariant by appeal to \cite[Lemma 3.1.14(2)]{AFHLocalization}.

Next, we claim that $\mathbf K_{i+1} \subset \mathbf K_i$ is again normal.  Since $\Gamma^{i+1}_{\A^1} \mathbf G \subset \mathbf G$ is normal, it suffices to prove (abusing notation slightly) that if $k \in \mathbf K$ and $g \in \Gamma^i_{\A^1} \mathbf G$ then $k^g \in \mathbf K_{i+1}$.  This statement holds since
\[ 
k^g = kk^{-1}g^{-1}kg = k[k^{-1},g^{-1}] \in \mathbf K \cdot \Gamma^{i+1}_{\A^1} \mathbf G, 
\] 
the filtration being central.

Now we apply this construction with $\mathbf K := \mathbf H$; we shall prove by induction that $\mathbf H_i = \mathbf G$ for all $i$.
Since also \[ \mathbf H = \bigcap_i \mathbf H_i \] by local nilpotency (if $s^*$ is any stalk functor, then $s^* \mathbf H \subset s^*(\bigcap_i \mathbf H_i) \subset s^* \mathbf H_i$ for any $i$, but by assumption $s^* \mathbf H_i = s^* \mathbf H$ for $i$ large enough), this will conclude the proof.
Noting that $\mathbf H_1 = \mathbf G$ by definition, we assume that $\mathbf H_n = \mathbf G$.  The discussion above shows that $\mathbf H_{n+1} \subset \mathbf H_n = \mathbf G$ is normal; set $\mathbf Q := \mathbf{H}_{n}/\mathbf{H}_{n+1}$.  The sheaf $\mathbf{H}_{n+1}$ is very strongly $\aone$-invariant (Remark~\ref{rmk:extremely-strongly-aone-inv}) and the sheaf $\mathbf{Q}$ is strongly $\aone$-invariant by appeal to Lemma~\ref{lem:a1invquotient}.  

Since $\mathbf H^{ab}_{\A^1}$ surjects onto $\mathbf G^{ab}_{\A^1} \simeq (\mathbf H_n)^{ab}_{\A^1}$, also $(\mathbf H_{n+1})^{ab}_{\A^1} \to (\mathbf H_{n})^{ab}_{\A^1}$ is surjective.  The functor $(-)^{ab}_{\A^1}$ is a left adjoint and hence preserves quotients; thus $\mathbf Q^{ab}_{\A^1} = 0$.  Note also that $\mathbf{Q}$ is necessarily locally $\aone$-nilpotent as a quotient of the locally $\aone$-nilpotent sheaf of groups $\mathbf{G}$ \cite[Lemma 3.2.7(2)]{AFHLocalization}.  Since $\mathbf{Q}^{ab}_{\aone}$ is trivial and $\mathbf{Q}$ is locally $\aone$-nilpotent, one may inductively deduce that $\mathbf{Q} = 0$. Thus, we conclude that $\mathbf{H}_{n+1} = \mathbf{G}$ as well.
\end{proof}

The next result is an analog of \cite[Lemma 3.1]{StallingsCS}.

\begin{proposition}
	\label{prop:stallings}
	If $f: \mathbf{G}_1 \to \mathbf{G}_2$ is a morphism of strongly $\aone$-invariant, locally nilpotent sheaves of groups such that $\H_1^{\aone}(Bf)$ is an isomorphism and $\H_2^{\aone}(Bf)$ is an epimorphism, then 
	\[
	\mathbf{G}_1/\Gamma^{i+1}_{\aone}\mathbf{G}_1 \longrightarrow \mathbf{G}_2/\Gamma^{i+1}_{\aone}\mathbf{G}_2
	\]
	is an isomorphism for all $i \geq 1$.  In particular, if $\mathbf{G}_1$ and $\mathbf{G}_2$ are locally $\A^1$-nilpotent, then $f$ is an isomorphism.
\end{proposition}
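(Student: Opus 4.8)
The plan is to run the classical Stallings argument from \cite{StallingsCS} in the abelian category of strictly $\aone$-invariant sheaves, using the $\aone$-lower central series $\Gamma^i_{\aone}$ in place of the ordinary lower central series and the five-term exact sequence of Lemma~\ref{lem:inductivestepLCC}(2) as the engine. The strategy is induction on $i$. The base case $i = 1$ is exactly the hypothesis that $\H_1^{\aone}(Bf) \colon \mathbf{G}_1^{ab}_{\aone} \to \mathbf{G}_2^{ab}_{\aone}$ is an isomorphism, since $\mathbf{G}_j/\Gamma^2_{\aone}\mathbf{G}_j = \mathbf{G}_j^{ab}_{\aone}$ by definition of the $\aone$-lower central series.

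For the inductive step, suppose $\mathbf{G}_1/\Gamma^{i+1}_{\aone}\mathbf{G}_1 \to \mathbf{G}_2/\Gamma^{i+1}_{\aone}\mathbf{G}_2$ is an isomorphism; I want to upgrade this to $i+2$. First I would apply Lemma~\ref{lem:inductivestepLCC} to the short exact sequences
\[
1 \longrightarrow \Gamma^{i+1}_{\aone}\mathbf{G}_j/\Gamma^{i+2}_{\aone}\mathbf{G}_j \longrightarrow \mathbf{G}_j/\Gamma^{i+2}_{\aone}\mathbf{G}_j \longrightarrow \mathbf{G}_j/\Gamma^{i+1}_{\aone}\mathbf{G}_j \longrightarrow 1
\]
for $j = 1, 2$, where the kernel is central by the defining property of the lower central series (so $[\mathbf{N},\mathbf{G}]_{\aone} = 1$ for $\mathbf{N}$ the kernel, i.e.\ the kernel is already strictly $\aone$-invariant and the "$\mathbf{N}/[\mathbf{N},\mathbf{G}]_{\aone}$" term equals $\Gamma^{i+1}_{\aone}\mathbf{G}_j/\Gamma^{i+2}_{\aone}\mathbf{G}_j$). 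This yields, functorially in $f$, a commutative ladder of five-term exact sequences
\[
\H_2^{\aone}(B(\mathbf{G}_j/\Gamma^{i+2}_{\aone}\mathbf{G}_j)) \to \H_2^{\aone}(B(\mathbf{G}_j/\Gamma^{i+1}_{\aone}\mathbf{G}_j)) \to \Gamma^{i+1}_{\aone}\mathbf{G}_j/\Gamma^{i+2}_{\aone}\mathbf{G}_j \to (\mathbf{G}_j)^{ab}_{\aone} \to \cdots.
\]
By the inductive hypothesis the $\H_2^{\aone}(B(\mathbf{G}_j/\Gamma^{i+1}_{\aone}\mathbf{G}_j))$ terms agree and the last two nonzero terms agree; a diagram chase in the abelian category then shows $\Gamma^{i+1}_{\aone}\mathbf{G}_1/\Gamma^{i+2}_{\aone}\mathbf{G}_1 \to \Gamma^{i+1}_{\aone}\mathbf{G}_2/\Gamma^{i+2}_{\aone}\mathbf{G}_2$ is an isomorphism \emph{provided} the map on $\H_2^{\aone}$ of the quotients by $\Gamma^{i+2}_{\aone}$ is still surjective — but this is the subtle point, so instead I would argue as in Stallings: show that $\H_2^{\aone}(B(\mathbf{G}_1/\Gamma^{i+1}_{\aone}\mathbf{G}_1)) \to \H_2^{\aone}(B(\mathbf{G}_2/\Gamma^{i+1}_{\aone}\mathbf{G}_2))$ is surjective (it is in fact an isomorphism by induction plus a separate argument), and then chase the ladder to get that the map $\Gamma^{i+1}_{\aone}\mathbf{G}_1/\Gamma^{i+2}_{\aone}\mathbf{G}_1 \to \Gamma^{i+1}_{\aone}\mathbf{G}_2/\Gamma^{i+2}_{\aone}\mathbf{G}_2$ is surjective, hence by comparing with the exact sequences that $\mathbf{G}_1/\Gamma^{i+2}_{\aone}\mathbf{G}_1 \to \mathbf{G}_2/\Gamma^{i+2}_{\aone}\mathbf{G}_2$ is an isomorphism. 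Surjectivity of the $\Gamma^{i+1}/\Gamma^{i+2}$-map can alternatively be extracted directly: $\Gamma^{i+1}_{\aone}\mathbf{G}_j/\Gamma^{i+2}_{\aone}\mathbf{G}_j$ is a quotient of $(\Gamma^{i}_{\aone}\mathbf{G}_j/\Gamma^{i+1}_{\aone}\mathbf{G}_j) \otimes \mathbf{G}_j^{ab}_{\aone}$ (the iterated commutator receives a surjection from this tensor product), and an easy induction on $i$ using the $i=1$ hypothesis shows $\Gamma^i_{\aone}\mathbf{G}_1/\Gamma^{i+1}_{\aone}\mathbf{G}_1 \to \Gamma^i_{\aone}\mathbf{G}_2/\Gamma^{i+1}_{\aone}\mathbf{G}_2$ is surjective for all $i$; combined with the five-term sequence and the inductive isomorphism on $\mathbf{G}_j/\Gamma^{i+1}_{\aone}$, this forces the map on the $i+2$ quotient to be an isomorphism.

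For the "in particular" clause: if $\mathbf{G}_1$ and $\mathbf{G}_2$ are $\aone$-nilpotent, then by Proposition~\ref{prop:functoriallowercentralseries}(2) their $\aone$-lower central series terminate, say $\Gamma^{c+1}_{\aone}\mathbf{G}_1 = \Gamma^{c+1}_{\aone}\mathbf{G}_2 = 1$ for $c$ large enough to work for both. Applying the isomorphism just proved with $i = c$ gives that $f\colon \mathbf{G}_1 = \mathbf{G}_1/\Gamma^{c+1}_{\aone}\mathbf{G}_1 \to \mathbf{G}_2/\Gamma^{c+1}_{\aone}\mathbf{G}_2 = \mathbf{G}_2$ is an isomorphism.

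The main obstacle I anticipate is the tensor-product surjectivity statement $\Gamma^i_{\aone}\mathbf{G}/\Gamma^{i+1}_{\aone}\mathbf{G}$ being a quotient of $(\Gamma^{i-1}_{\aone}\mathbf{G}/\Gamma^i_{\aone}\mathbf{G}) \otimes \mathbf{G}^{ab}_{\aone}$ (or, more precisely, getting any usable grip on $\Gamma^i_{\aone}/\Gamma^{i+1}_{\aone}$): unlike in the classical setting the commutator "$[\mathbf{N},\mathbf{G}]_{\aone}$" is defined via the universal property in Lemma~\ref{lem:inductivestepLCC}, not by generators, so I need to check that the classical commutator identities still produce the expected surjection $(\Gamma^{i-1}_{\aone}\mathbf{G})^{ab}_{\aone} \otimes \mathbf{G}^{ab}_{\aone} \twoheadrightarrow \Gamma^i_{\aone}\mathbf{G}/\Gamma^{i+1}_{\aone}\mathbf{G}$ after $\aone$-localization. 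Granting the functoriality in Lemma~\ref{lem:inductivestepLCC}(3) and right-exactness of $(-)^{ab}_{\aone}$ and $\mathbf{S}^{\infty}$, this should go through, but it is the one place where one must be careful that "taking commutators" and "$\aone$-localizing" interact correctly; everything else is a diagram chase in an abelian category.
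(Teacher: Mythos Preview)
Your induction is set up on the wrong short exact sequence, and this is why you run into the ``subtle point'' and then have to reach for an unproven tensor-product surjection. You apply Lemma~\ref{lem:inductivestepLCC} to
\[
1 \longrightarrow \Gamma^{i+1}_{\aone}\mathbf{G}_j/\Gamma^{i+2}_{\aone}\mathbf{G}_j \longrightarrow \mathbf{G}_j/\Gamma^{i+2}_{\aone}\mathbf{G}_j \longrightarrow \mathbf{G}_j/\Gamma^{i+1}_{\aone}\mathbf{G}_j \longrightarrow 1,
\]
so the leftmost term of your five-term sequence is $\H_2^{\aone}(B(\mathbf{G}_j/\Gamma^{i+2}_{\aone}\mathbf{G}_j))$, about which you know nothing. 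From your ladder you can indeed extract surjectivity of $\Gamma^{i+1}_{\aone}\mathbf{G}_1/\Gamma^{i+2}_{\aone}\mathbf{G}_1 \to \Gamma^{i+1}_{\aone}\mathbf{G}_2/\Gamma^{i+2}_{\aone}\mathbf{G}_2$ (since the second vertical arrow is an isomorphism and the graded piece is the cokernel), but not injectivity. Surjectivity on graded pieces plus the inductive isomorphism on $\mathbf{G}_j/\Gamma^{i+1}_{\aone}$ only yields \emph{surjectivity} of $\mathbf{G}_1/\Gamma^{i+2}_{\aone} \to \mathbf{G}_2/\Gamma^{i+2}_{\aone}$, not an isomorphism; your claim that it ``forces'' an isomorphism is simply wrong. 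Your proposed tensor-product argument does not help either: even granting the surjection $(\Gamma^{i}_{\aone}/\Gamma^{i+1}_{\aone}) \otimes \mathbf{G}^{ab}_{\aone} \twoheadrightarrow \Gamma^{i+1}_{\aone}/\Gamma^{i+2}_{\aone}$ (which, as you note, is not obvious in the $\aone$-setting since $[\mathbf{N},\mathbf{G}]_{\aone}$ is defined by a universal property rather than by commutators), it again only supplies surjectivity.

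The paper --- and Stallings --- instead applies Lemma~\ref{lem:inductivestepLCC} to the sequence
\[
1 \longrightarrow \Gamma^{r}_{\aone}\mathbf{G}_j \longrightarrow \mathbf{G}_j \longrightarrow \mathbf{G}_j/\Gamma^{r}_{\aone}\mathbf{G}_j \longrightarrow 1.
\]
The point is that by Definition~\ref{defn:lowercentralseries} one has $[\Gamma^r_{\aone}\mathbf{G}_j,\mathbf{G}_j]_{\aone} = \Gamma^{r+1}_{\aone}\mathbf{G}_j$, so the middle term $\mathbf{N}/[\mathbf{N},\mathbf{G}]_{\aone}$ of the five-term sequence is exactly the graded piece $\Gamma^{r}_{\aone}\mathbf{G}_j/\Gamma^{r+1}_{\aone}\mathbf{G}_j$, while the leftmost term is now $\H_2^{\aone}(B\mathbf{G}_j)$. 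The hypothesis on $\H_2^{\aone}(Bf)$ makes the first vertical arrow an epimorphism, the inductive hypothesis makes the second and fifth isomorphisms, and the $\H_1^{\aone}$ hypothesis makes the fourth an isomorphism; the five lemma then gives that the graded-piece map is an \emph{isomorphism} directly, with no further input needed. The rest is your final short-exact-sequence comparison.
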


\begin{proof}
	Taking $\mathbf{G} = \mathbf{G}_i$, $\mathbf{N} = \Gamma^r_{\aone}\mathbf{G}_i$, Lemma~\ref{lem:inductivestepLCC} in conjunction with the definition of the $\aone$-lower central series yields a morphism of commutative diagrams of the form
	\[
	\xymatrix{
	\H_2^{\aone}(B\mathbf{G}_1) \ar[r]\ar[d] & \H_2^{\aone}(B \mathbf{G}_1/\Gamma^r_{\aone}\mathbf{G}_1) \ar[r]\ar[d] & \Gamma^r_{\aone}\mathbf{G}_1/\Gamma^{r+1}_{\aone}\mathbf{G}_1 \ar[r]\ar[d] & \H_1^{\aone}(B\mathbf{G}_1) \ar[r]\ar[d] & \H_1^{\aone}(B\mathbf{G}_1/\Gamma^r_{\aone}\mathbf{G}_1)\ar[d] \\
		\H_2^{\aone}(B\mathbf{G}_2) \ar[r] & \H_2^{\aone}(B \mathbf{G}_2/\Gamma^r_{\aone}\mathbf{G}_2) \ar[r] & \Gamma^r_{\aone}\mathbf{G}_2/\Gamma^{r+1}_{\aone}\mathbf{G}_2 \ar[r] & \H_1^{\aone}(B\mathbf{G}_2) \ar[r] & \H_1^{\aone}(B\mathbf{G}_2/\Gamma^r_{\aone}\mathbf{G}_2)  \\
	}
	\]
	Assume inductively that $\mathbf{G}_1/\Gamma^r_{\aone}\mathbf{G}_1 \to \mathbf{G}_2/\Gamma^r_{\aone}\mathbf{G}_2$ is an isomorphism; the base case $r = 2$ follows from the assumption that $\H_1(Bf)$ is an isomorphism.  In that case, the second and fifth vertical arrows from the left are isomorphisms.  By hypothesis, the first vertical arrow is an epimorphism and the fourth vertical arrow is an isomorphism by assumption.  The five lemma implies that the third vertical arrow is an isomorphism as well.  Then, functoriality of the lower central series in conjunction with the short exact sequences
	\[
	\xymatrix{
	1 \ar[r] & \Gamma^r_{\aone}\mathbf{G}_i/\Gamma^{r+1}_{\aone}\mathbf{G}_i \ar[r] & \mathbf{G}_i/\Gamma^{r+1}_{\aone} \mathbf{G}_i \ar[r] & \mathbf{G}_i/\Gamma^{r}_{\aone} \mathbf{G}_i \ar[r] & 1 
	}
	\]
	together with the five lemma allow us to conclude that $\mathbf{G}_1/\Gamma^{r+1}_{\aone} \mathbf{G}_1 \to \mathbf{G}_2/\Gamma^{r+1}_{\aone} \mathbf{G}_2$ is an isomorphism.  
\end{proof}

More generally, we can refine the $\aone$-lower central series of a strongly $\aone$-invariant sheaf $\mathbf{G}$ to an $\aone$-lower central series for an action.  

\begin{construction}
	\label{const:aoneinvariantpicentralseries}
	Suppose $\bpi$ is a strongly $\aone$-invariant sheaf of groups and $\mathbf{G}$ is a very strongly $\aone$-invariant sheaf with $\bpi$-action.  In that case, there is a largest quotient sheaf of groups $\mathbf{G}_{\bpi}$ on which $\bpi$ acts trivially.  The sheaf $\mathbf{G}_{\bpi}$ need not be $\aone$-invariant.  However, Theorem~\ref{thm:sinfty} implies that 
	\[
	\mathbf{G} \longrightarrow \mathbf{G}_{\bpi} \longrightarrow \mathbf{S}^{\infty}\mathbf{G}_{\bpi}
	\]
	is an epimorphism with $\mathbf{S}^{\infty}\mathbf{G}_{\bpi}$ $\aone$-invariant. Very strong invariance of $\mathbf G$ implies that $\mathbf{S}^{\infty}\mathbf{G}_{\bpi}$ is strongly $\aone$-invariant.  It is thus necessarily the initial strongly $\aone$-invariant quotient of $\mathbf{G}$ with trivial $\bpi$-action.  In that case, we define
	\[
	\Gamma^{2}_{\bpi}\mathbf{G} := \ker(\mathbf{G} \to \mathbf{S}^{\infty}\mathbf{G}_{\bpi}),
	\]
	which is strongly $\aone$-invariant by appeal to \cite[Lemma 3.1.14(1)]{AFHLocalization}. If $\mathbf G$ is locally nilpotent, then for $n \geq 3$, we inductively define 
	\[
	\Gamma^{n}_{\bpi}\mathbf{G} := \Gamma^2_{\bpi}\Gamma^{n-1}_{\bpi}\mathbf{G}.
	\]
	By definition, $\Gamma^i_{\bpi}\mathbf{G}$ is a $\bpi$-central series for $\mathbf{G}$ in the sense of \cite[Definition 3.2.1]{AFHLocalization}.
\end{construction}

\begin{proposition}
	\label{prop:functorialpicentralseries}
	Suppose $\mathbf{G}$ is a strongly $\aone$-invariant, locally nilpotent sheaf with action of $\bpi$.  
	\begin{enumerate}[noitemsep,topsep=1pt]
		\item The series $\Gamma^i_{\bpi}\mathbf{G}$ is functorial in $\bpi$ and $\mathbf{G}$.
		\item When $\bpi = \mathbf{G}$ acting by conjugation on $\mathbf{G}$, $\Gamma^i_{\mathbf{G}}\mathbf{G}$ coincides with the $\aone$-lower central series of $\mathbf{G}$ of \textup{Definition~\ref{defn:lowercentralseries}}.
		\item The action of $\bpi$ on $\mathbf{G}$ is $\aone$-nilpotent if and only if $\Gamma^i_{\bpi}\mathbf{G}$ terminates after finitely many steps.
	\end{enumerate}
\end{proposition}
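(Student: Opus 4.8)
The plan is to deduce all three points from the universal properties built into Construction~\ref{const:aoneinvariantpicentralseries}, namely that $\mathbf{G}_{\bpi}$ is the initial quotient of $\mathbf{G}$ on which $\bpi$ acts trivially and $\mathbf{S}^{\infty}\mathbf{G}_{\bpi}$ is the initial strongly $\aone$-invariant such quotient. For Point~(1), functoriality means: given strongly $\aone$-invariant $\bpi,\bpi'$, sheaves $\mathbf{G},\mathbf{G}'$ with $\bpi$- resp.\ $\bpi'$-action, a homomorphism $\varphi\colon\bpi\to\bpi'$, and a homomorphism $\psi\colon\mathbf{G}\to\mathbf{G}'$ equivariant along $\varphi$, one obtains $\Gamma^i_{\bpi}\mathbf{G}\to\Gamma^i_{\bpi'}\mathbf{G}'$ equivariant along $\varphi$, compatibly with composites. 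I would treat $i=2$ first: the target $\mathbf{S}^{\infty}\mathbf{G}'_{\bpi'}$ is strongly $\aone$-invariant and $\bpi$ acts trivially on it through $\varphi$, so the composite $\mathbf{G}\to\mathbf{G}'\to\mathbf{S}^{\infty}\mathbf{G}'_{\bpi'}$ factors uniquely through $\mathbf{G}\to\mathbf{S}^{\infty}\mathbf{G}_{\bpi}$; taking kernels of the two vertical maps gives $\Gamma^2_{\bpi}(\psi)$, and uniqueness of the factorization forces functoriality (composites and identities). Since $\bpi$ acts on $\Gamma^2_{\bpi}\mathbf{G}$ by restriction and $\Gamma^n_{\bpi}\mathbf{G}=\Gamma^2_{\bpi}\Gamma^{n-1}_{\bpi}\mathbf{G}$, induction on $n$ finishes.

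For Point~(2), take $\bpi=\mathbf{G}$ acting by conjugation. A quotient of $\mathbf{G}$ carries the trivial conjugation action iff it is abelian, so $\mathbf{G}_{\mathbf{G}}$ is the (naive) abelianization $\H_1(B\mathbf{G})$, and hence $\mathbf{S}^{\infty}\mathbf{G}_{\mathbf{G}}=\mathbf{S}^{\infty}\H_1(B\mathbf{G})=\H_1^{\aone}(B\mathbf{G})=\mathbf{G}^{ab}_{\aone}$ by the proof of Lemma~\ref{lem:abelianization}; thus $\Gamma^2_{\mathbf{G}}\mathbf{G}=\ker(\mathbf{G}\to\mathbf{G}^{ab}_{\aone})=[\mathbf{G},\mathbf{G}]_{\aone}=\Gamma^2_{\aone}\mathbf{G}$. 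For the inductive step set $\mathbf{N}:=\Gamma^{n-1}_{\mathbf{G}}\mathbf{G}=\Gamma^{n-1}_{\aone}\mathbf{G}$, a strongly $\aone$-invariant normal subsheaf of $\mathbf{G}$ with $\mathbf{G}/\mathbf{N}$ strongly $\aone$-invariant by Proposition~\ref{prop:2outof3forstrongaoneinvariance}. Since $\mathbf{N}\subseteq\mathbf{G}$, a quotient of $\mathbf{N}$ on which $\mathbf{G}$ acts trivially factors through $\mathbf{N}^{ab}$, so $\mathbf{N}_{\mathbf{G}}$ is the largest quotient of $\mathbf{N}^{ab}$ with trivial $\mathbf{G}$-action; applying $\mathbf{S}^{\infty}$, and using that a strongly $\aone$-invariant abelian sheaf is strictly $\aone$-invariant (Morel, $k$ perfect), identifies $\mathbf{S}^{\infty}\mathbf{N}_{\mathbf{G}}$ with the initial strictly $\aone$-invariant quotient of $\mathbf{N}^{ab}$ on which $\mathbf{G}$ acts trivially, i.e.\ with $\mathbf{N}/[\mathbf{N},\mathbf{G}]_{\aone}$ of Lemma~\ref{lem:inductivestepLCC}. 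Taking kernels gives $\Gamma^n_{\mathbf{G}}\mathbf{G}=\Gamma^2_{\mathbf{G}}\mathbf{N}=[\mathbf{N},\mathbf{G}]_{\aone}=\Gamma^n_{\aone}\mathbf{G}$, as wanted.

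For Point~(3), one direction is immediate: Construction~\ref{const:aoneinvariantpicentralseries} records that $\Gamma^\bullet_{\bpi}\mathbf{G}$ is always a $\bpi$-central series, so if it reaches the trivial sheaf after finitely many steps the $\bpi$-action is $\aone$-nilpotent by definition. Conversely, suppose $\mathbf{G}=\mathbf{G}_1\supseteq\cdots\supseteq\mathbf{G}_{c+1}=1$ is a $\bpi$-central series. I would prove $\Gamma^i_{\bpi}\mathbf{G}\subseteq\mathbf{G}_i$ by induction, the case $i=1$ being the definition. Assuming it for $i$, the image of the composite $\Gamma^i_{\bpi}\mathbf{G}\hookrightarrow\mathbf{G}_i\twoheadrightarrow\mathbf{G}_i/\mathbf{G}_{i+1}$ is strongly $\aone$-invariant (images of maps of strongly $\aone$-invariant sheaves are such) and carries the trivial $\bpi$-action, so by the universal property of $\mathbf{S}^{\infty}(\Gamma^i_{\bpi}\mathbf{G})_{\bpi}$ this composite factors through $\Gamma^i_{\bpi}\mathbf{G}\to\mathbf{S}^{\infty}(\Gamma^i_{\bpi}\mathbf{G})_{\bpi}$; hence $\Gamma^{i+1}_{\bpi}\mathbf{G}=\ker(\Gamma^i_{\bpi}\mathbf{G}\to\mathbf{S}^{\infty}(\Gamma^i_{\bpi}\mathbf{G})_{\bpi})\subseteq\ker(\Gamma^i_{\bpi}\mathbf{G}\to\mathbf{G}_i/\mathbf{G}_{i+1})\subseteq\mathbf{G}_{i+1}$. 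Taking $i=c$ shows $\Gamma^{c+1}_{\bpi}\mathbf{G}=1$.

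I expect the main friction to be Point~(2): it requires reconciling two a priori different constructions of the $\aone$-lower central series — the one of Definition~\ref{defn:lowercentralseries}, which at each stage extracts $\mathbf{N}/[\mathbf{N},\mathbf{G}]_{\aone}$ via the $\aone$-homology of a cofiber in Lemma~\ref{lem:inductivestepLCC}, and the one of Construction~\ref{const:aoneinvariantpicentralseries}, built from $\mathbf{S}^{\infty}$ of $\mathbf{G}$-coinvariants. The reconciliation rests on two small but essential observations: the $\mathbf{G}$-coinvariants of $\mathbf{N}$ and of $\mathbf{N}^{ab}$ agree, and strongly $\aone$-invariant abelian sheaves are strictly $\aone$-invariant, so that the universal properties ``initial strongly $\aone$-invariant quotient'' and ``initial strictly $\aone$-invariant quotient'' coincide in the abelian case; this last point is where perfectness of $k$ enters.
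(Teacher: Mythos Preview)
Your proposal is correct and follows essentially the same approach as the paper's proof, which dispatches Point~(1) by functoriality of $\mathbf{S}^{\infty}$, Point~(2) by reduction to the classical fact that the conjugation $\mathbf{G}$-central series equals the lower central series, and Point~(3) as immediate from the definitions. Your treatment of Point~(2) is considerably more explicit than the paper's one-line justification: you spell out why the two a priori different descriptions of $[\mathbf{N},\mathbf{G}]_{\aone}$ (via $\H_2^{\aone}$ of a cofiber in Lemma~\ref{lem:inductivestepLCC} versus via $\mathbf{S}^{\infty}$ of coinvariants in Construction~\ref{const:aoneinvariantpicentralseries}) have the same universal property, which is the content the paper leaves to the reader.
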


\begin{proof}
	Under the assumptions, $\mathbf{G}$ is very strongly $\aone$-invariant by appeal to Proposition~\ref{prop:lnilpstrongimpliesverystrong}.  Granted that observation, the first statement is immediate from the fact that $\mathbf{S}^{\infty}$ is functorial (Theorem~\ref{thm:sinfty}(1)).  The second statement follows from the fact that the corresponding statement holds for the classical central series.  The final statement is immediate from the definitions.
\end{proof}

\begin{entry}
	\label{par:aonecoinvariants}
	Suppose $\mathbf{A}$ is a strictly $\aone$-invariant sheaf with an action of a strongly $\aone$-invariant sheaf of groups $\bpi$.  We may consider the twisted Eilenberg-Mac Lane space $K^{\bpi}(\mathbf{A},n)$ for any integer $n \geq 2$.  In that case, there is a fiber sequence
	\[
	K(\mathbf{A},n) \longrightarrow K^{\bpi}(\mathbf{A},n) \stackrel{f}{\longrightarrow} B\bpi.
	\]
	The relative Hurewicz theorem~\ref{thm:relativeHurewicz} tells us that $\H_{n}^{\aone}(\cof(f))$ is the largest strictly $\aone$-invariant quotient of $\mathbf{A}$ on which $\bpi$ acts trivially.  We write $\H_0^{\aone}(\bpi,\mathbf{A})$ for this strictly $\aone$-invariant sheaf, which we call the sheaf of $\aone$-coinvariants of the $\bpi$-action on $\mathbf{A}$.  In particular if the action of $\bpi$ on $\mathbf{A}$ is trivial, then $\H_0^{\aone}(\bpi;\mathbf{A}) = \mathbf{A}$.  
\end{entry}

\begin{proposition}
	\label{prop:functorialityofaonecoinvariants}
	Assume $\mathbf{A}$ is a $\bpi$-module.
	\begin{enumerate}[noitemsep,topsep=1pt]
		\item The sheaf $\H_0^{\aone}(\bpi;\mathbf{A})$ is functorial in $\bpi$ and $\mathbf{A}$.
		\item The sheaf $\H_0^{\aone}(\bpi;\mathbf{A})$ coincides with $\mathbf{S}^{\infty}\mathbf{A}_{\bpi}$ from \textup{Construction~\ref{const:aoneinvariantpicentralseries}}.
	\end{enumerate}
\end{proposition}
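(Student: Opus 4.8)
The plan is to prove the two statements in the order given, deriving both from the explicit description furnished by Construction~\ref{const:aoneinvariantpicentralseries} and the relative Hurewicz theorem.

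For the first statement, I would argue that the twisted Eilenberg--Mac Lane construction $K^{\bpi}(\mathbf{A},n)$ is functorial in the pair $(\bpi,\mathbf{A})$ (a morphism of pairs being a map of groups $\bpi \to \bpi'$ together with an equivariant map $\mathbf{A} \to \mathbf{A}'$), and that the fiber sequence $K(\mathbf{A},n) \to K^{\bpi}(\mathbf{A},n) \stackrel{f}{\to} B\bpi$ displayed in \ref{par:aonecoinvariants} is natural in that variable. Hence the cofiber $\cof(f)$ is functorial, and therefore so is $\H_n^{\aone}(\cof(f))$, since $\aone$-homology sheaves are functors on pointed motivic spaces. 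As $\H_0^{\aone}(\bpi;\mathbf{A})$ was \emph{defined} to be $\H_n^{\aone}(\cof(f))$ (independent of $n \geq 2$, which one should remark), functoriality is immediate. One small point to check is that the identification is independent of the chosen $n$; this follows because for two choices $n, n+1$ the corresponding cofibers differ by a simplicial suspension, under which $\aone$-homology shifts by one degree (Morel's suspension isomorphism), compatibly with the comparison maps.

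For the second statement, I would combine two facts. On one hand, by Construction~\ref{const:aoneinvariantpicentralseries} the composite $\mathbf{A} \to \mathbf{A}_{\bpi} \to \mathbf{S}^{\infty}\mathbf{A}_{\bpi}$ exhibits $\mathbf{S}^{\infty}\mathbf{A}_{\bpi}$ as the initial strongly $\aone$-invariant sheaf of groups receiving a map from $\mathbf{A}$ with trivial $\bpi$-action; since $\mathbf{A}$ is abelian and $\mathbf{S}^{\infty}$ preserves this (a strongly $\aone$-invariant quotient of an abelian sheaf is abelian, and over a perfect field strongly and strictly $\aone$-invariant coincide — or one simply notes it is a quotient of the abelian sheaf $\mathbf{A}$), $\mathbf{S}^{\infty}\mathbf{A}_{\bpi}$ is in fact the initial \emph{strictly} $\aone$-invariant sheaf under $\mathbf{A}$ with trivial $\bpi$-action. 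On the other hand, the relative Hurewicz theorem~\ref{thm:relativeHurewicz} applied to $f: K^{\bpi}(\mathbf{A},n) \to B\bpi$, whose fiber is $K(\mathbf{A},n)$ with $\bpi_n = \mathbf{A}$ and $\bpi_1$ of the total space mapping onto $\bpi$, identifies $\H_{n}^{\aone}(\cof(f)) = \H_0^{\aone}(\bpi;\mathbf{A})$ with precisely the initial strictly $\aone$-invariant sheaf under $\bpi_n(\fib(f)) = \mathbf{A}$ on which $\bpi_1(K^{\bpi}(\mathbf{A},n))$ — equivalently $\bpi$ — acts trivially. Matching the two universal properties gives the desired isomorphism $\H_0^{\aone}(\bpi;\mathbf{A}) \cong \mathbf{S}^{\infty}\mathbf{A}_{\bpi}$, and one should check this isomorphism is natural, which is automatic since both sides satisfy the same universal property functorially.

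The main obstacle is bookkeeping around the relative Hurewicz theorem: its statement is about the initial strictly $\aone$-invariant \emph{sheaf} under $\bpi_n(\fib(f))$ with trivial $\bpi_1(\mathscr{E})$-action, so I need to confirm that (i) the action of $\bpi_1(K^{\bpi}(\mathbf{A},n))$ on $\bpi_n = \mathbf{A}$ factors through $\bpi$ via the given twisted action, so ``trivial $\bpi_1(\mathscr{E})$-action'' really does mean ``trivial $\bpi$-action'', and (ii) the universal property of $\mathbf{S}^{\infty}(-)_{\bpi}$ from Theorem~\ref{thm:sinfty} and Proposition~\ref{prop:hogadichoudhury}, which is a priori about $\aone$-invariant \emph{groups}, upgrades to the strict/abelian setting — this is where perfectness of $k$ (used implicitly throughout the section) and Morel's theorem that strongly $\aone$-invariant abelian sheaves are strictly $\aone$-invariant enter. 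Once these identifications are made, both parts are short.
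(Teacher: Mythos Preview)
Your proposal is correct and follows essentially the same approach as the paper: for part (1) the paper simply says ``immediate from the definitions,'' which is exactly your functoriality-of-$K^{\bpi}(\mathbf{A},n)$-and-cofibers argument spelled out, and for part (2) the paper matches the two universal properties just as you do, noting that both objects are the initial strictly $\aone$-invariant quotient of $\mathbf{A}$ with trivial $\bpi$-action (the universal property of $\H_0^{\aone}(\bpi;\mathbf{A})$ having already been recorded in \ref{par:aonecoinvariants}). Your additional bookkeeping remarks---independence of $n$, the action factoring through $\bpi$, and the passage from strongly to strictly $\aone$-invariant for abelian sheaves---are all correct and simply make explicit what the paper leaves implicit.
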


\begin{proof}
	The first statement is immediate from the definitions.  The second statement follows because $\H_0^{\aone}(\bpi;\mathbf{A})$ and $\mathbf{S}^{\infty}\mathbf{A}_{\bpi}$ have the same universal property: they are the largest strictly $\aone$-invariant quotients of $\mathbf{A}$ with trivial action of $\bpi$.
\end{proof}

\begin{rem}
	Another example arises from the mod $p$ lower central series (or Stallings filtration) of a group \cite{StallingsCS}.  Recall that if $p$ is a fixed integer, $G$ is a group and $U$ is a subgroup, then $G \sharp_p U$ is the subgroup generated by $[g,u]v^p$ for $g \in G, u,v \in U$.  If $U$ is normal in $G$, then $G \sharp_p U$ is normal in $G$.  One then defines the mod $p$ lower central series inductively by setting $\Gamma^1_pG = G$, $\Gamma^{i+1}_p G := G \sharp_p \Gamma^i_pG$.  The construction above yields a central series which for $p = 0$ coincides with the lower central series, and for $p$ a prime is the largest descending central series with subquotients that are $\mathbb{F}_p$-vector spaces.  Since the above construction is evidently functorial it makes sense for presheaves of groups and then for sheaves of groups by sheafication.  In that case, we can define the mod $p$-$\aone$-lower central series following the procedure above.  If $\mathbf{G}$ is a strongly $\aone$-invariant, locally nilpotent sheaf of groups, we define $\Gamma^i_{\aone,p}\mathbf{G}$ by setting $\Gamma^1_{\aone,p}\mathbf{G} = \mathbf{G}$ and then inductively defining
	\[
	\Gamma^{i+1}_{\aone,p}\mathbf{G} := \ker(\mathbf{G} \longrightarrow \mathbf{S}^{\infty}\mathbf{G}/\mathbf{G} \sharp_p \Gamma^i_{\aone,p} \mathbf{G}).
	\]
	Theorem~\ref{thm:sinfty} and Proposition~\ref{prop:hogadichoudhury-RENAMED} then imply that $\Gamma^{i+1}_{\aone,p}\mathbf{G}$ is a functorial decreasing central series with successive subquotients that are strictly $\aone$-invariant sheaves of ${\mathbb F}_p$-vector spaces.
\end{rem}

\section{Principal refinements of Moore--Postnikov factorizations}
Recall that a pointed, connected motivic space $\mathscr{X}$ is called nilpotent if $\bpi_1(\mathscr{X})$ is $\aone$-nilpotent and $\bpi_1(\mathscr{X})$ acts $\aone$-nilpotently on higher homotopy sheaves \cite[Definition 3.3.1]{AFHLocalization}.  More generally if $f: \mathscr{E} \to \mathscr{B}$ is a morphism of pointed connected motivic spaces, then $f$ is nilpotent if $\fib(f)$ is connected and the action of $\bpi_1(\mathscr{E})$ on $\bpi_i(\fib(f))$ is $\aone$-nilpotent for all $i \geq 1$.  The relative Hurewicz theorem~\ref{thm:relativeHurewicz} in conjunction with the functorial central series for nilpotent actions from Proposition~\ref{prop:functorialpicentralseries} allows us to construct a {\em functorial} principal refinement of the Moore-Postnikov tower improving \cite[Theorem 3.3.13]{AFHLocalization}.

\begin{theorem}
	\label{thm:nilpotentprincipalrefinement}
	Assume $k$ is a perfect field.  Suppose $f: \mathscr{E} \to \mathscr{B}$ is a morphism of pointed, connected motivic spaces.  If $f$ is nilpotent, and $\bpi_1(\fib(f))$ is very strongly $\aone$-invariant, then $f$ admits a functorial principal refinement.  More precisely, if $f': \mathscr{E}' \to \mathscr{B}$ is another nilpotent morphism of pointed, connected motivic spaces with $\bpi_1(\fib(f'))$ very strongly $\aone$-invariant, and $g: \mathscr{E} \to \mathscr{E}'$ is a morphism of spaces over $\mathscr{B}$, then $g$ induces homomorphisms 
	\[
	\Gamma^i(g): \Gamma^i_{\bpi_1(\mathscr{E})}\bpi_j(\fib(f)) \longrightarrow \Gamma^i_{\bpi_1(\mathscr{E}')}\bpi_j(\fib(f'))
	\] 
	for all $i,j$ which induce morphisms of layers of the principal refinements of $f$ and $f'$.	
\end{theorem}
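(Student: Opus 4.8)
The plan is to rebuild the principal refinement of \cite[Theorem 3.3.13]{AFHLocalization} using the \emph{functorial} $\aone$-central series of Construction~\ref{const:aoneinvariantpicentralseries} in place of the non-canonical central series used there, and then to observe that every other ingredient of the construction is already functorial, so that the output is a functor on nilpotent morphisms to $\mathscr{B}$. First I would produce the Moore--Postnikov factorization of $f$,
\[
\mathscr{E} \;\simeq\; \lim_n P_n f \longrightarrow \cdots \longrightarrow P_n f \longrightarrow P_{n-1}f \longrightarrow \cdots \longrightarrow P_0 f = \mathscr{B},
\]
with $\fib(\mathscr{E}\to P_n f)$ $n$-connected and $\fib(P_n f\to\mathscr{B})$ $n$-truncated; this is carried out in $\Shv_\Nis(\Sm_k)$ and controlled by Theorem~\ref{thm:unstableconnectivity} exactly as in \cite[\S 3.3]{AFHLocalization}, and the resulting construction is a functor on $\ho{k}_{/\mathscr{B}}$. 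For $n\geq 2$ one has $\bpi_1(P_{n-1}f)=\bpi_1(\mathscr{E})$, the fiber of $P_n f\to P_{n-1}f$ is the twisted Eilenberg--Mac Lane space $K^{\bpi_1(\mathscr{E})}(\bpi_n(\fib f),n)$, and $\bpi_n(\fib f)$ is strictly $\aone$-invariant; the finitely many bottom stages, which build up $\bpi_1(\fib f)$, are treated by the same recipe applied to the strongly $\aone$-invariant sheaf of groups $\bpi_1(\fib f)$ with its $\bpi_1(\mathscr{E})$-action---this action restricts to conjugation along $\bpi_1(\fib f)\to\bpi_1(\mathscr{E})$, so that the layers produced below are again abelian, hence strictly $\aone$-invariant since $k$ is perfect.

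Next I would refine a single stage $p_n\colon P_n f\to P_{n-1}f$. Since $f$ is nilpotent, Proposition~\ref{prop:functorialpicentralseries}(3) shows that the functorial central series $\Gamma^\bullet:=\Gamma^\bullet_{\bpi_1(\mathscr{E})}\bpi_n(\fib f)$ of Construction~\ref{const:aoneinvariantpicentralseries} terminates, say at step $c_n$, with each layer $\mathbf{A}_{n,i}:=\Gamma^i/\Gamma^{i+1}$ a strictly $\aone$-invariant sheaf carrying the trivial $\bpi_1(\mathscr{E})$-action. I would then factor $p_n$ as a finite tower
\[
P_n f = \mathscr{Z}_{n,c_n} \longrightarrow \mathscr{Z}_{n,c_n-1} \longrightarrow \cdots \longrightarrow \mathscr{Z}_{n,1} \longrightarrow \mathscr{Z}_{n,0} = P_{n-1}f,
\]
where $\mathscr{Z}_{n,i}\to P_{n-1}f$ is the twisted fibration with fiber $K^{\bpi_1(\mathscr{E})}(\bpi_n(\fib f)/\Gamma^{i+1},n)$ determined by the evident truncation of the $k$-invariant of $p_n$ (the endpoints are as indicated because $\Gamma^1=\bpi_n(\fib f)$ and $\Gamma^{c_n+1}=0$). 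Because the $\bpi_1(\mathscr{E})$-action on $\mathbf{A}_{n,i}$ is trivial, the extension class of the short exact sequence of $\bpi_1(\mathscr{E})$-modules $0\to\mathbf{A}_{n,i}\to\bpi_n(\fib f)/\Gamma^{i+1}\to\bpi_n(\fib f)/\Gamma^i\to 0$ is an \emph{untwisted} class $\mathscr{Z}_{n,i-1}\to K(\mathbf{A}_{n,i},n+1)$ whose fiber is $\mathscr{Z}_{n,i}$, so each $\mathscr{Z}_{n,i}\to\mathscr{Z}_{n,i-1}$ is a principal fibration with layer $K(\mathbf{A}_{n,i},n)$. The identification here of the initial strictly $\aone$-invariant, trivially-acted-on quotient of $\bpi_n(\fib f)/\Gamma^{i+1}$ with $\bpi_n(\fib f)/\Gamma^i$ is supplied by Proposition~\ref{prop:functorialityofaonecoinvariants} together with the relative Hurewicz theorem~\ref{thm:relativeHurewicz}, which also absorbs the $\aone$-local subtleties invisible in the classical argument. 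Concatenating these refinements over all $n$ produces a principal refinement of the Moore--Postnikov tower of $f$.

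For functoriality, a morphism $g\colon\mathscr{E}\to\mathscr{E}'$ over $\mathscr{B}$ induces a $\bpi_1(g)$-equivariant map $\fib f\to\fib f'$; applying $\bpi_j$ and invoking Proposition~\ref{prop:functorialpicentralseries}(1) produces the homomorphisms $\Gamma^i(g)\colon\Gamma^i_{\bpi_1(\mathscr{E})}\bpi_j(\fib f)\to\Gamma^i_{\bpi_1(\mathscr{E}')}\bpi_j(\fib f')$, hence maps of layers $\mathbf{A}_{j,i}\to\mathbf{A}'_{j,i}$. As the Moore--Postnikov tower, the central series (Proposition~\ref{prop:functorialpicentralseries}(1)), the $\aone$-coinvariants (Proposition~\ref{prop:functorialityofaonecoinvariants}(1)), and the formation of $k$-invariants and their fibers are all functorial, the whole tower $\mathscr{Z}_{\bullet,\bullet}$ is functorial in $\ho{k}_{/\mathscr{B}}$; in particular $g$ induces morphisms of the principal fibrations compatibly with the $\Gamma^i(g)$, which is the asserted functoriality. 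The step I expect to be the main obstacle is not this bookkeeping but verifying that the intermediate stages $\mathscr{Z}_{n,i}$ can be formed and glued \emph{inside} $\ho{k}$: the obstruction-theoretic manipulations of $k$-invariants, automatic in an $\infty$-topos, must be checked to survive motivic localization. This is precisely what the proof of \cite[Theorem 3.3.13]{AFHLocalization} establishes using Theorem~\ref{thm:unstableconnectivity} and strong $\aone$-invariance of homotopy sheaves, and the only genuinely new input---the functorial central series of Construction~\ref{const:aoneinvariantpicentralseries}---makes the upgrade to a functor routine.
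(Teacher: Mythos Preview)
Your proposal is correct and follows essentially the same approach as the paper: rerun the construction of the principal refinement from \cite{AFHLocalization} (the paper cites \cite[Corollary 4.2.4]{AFHLocalization} rather than \cite[Theorem 3.3.13]{AFHLocalization}, but this is the same mechanism), replacing the arbitrary $\aone$-central series by the functorial one of Construction~\ref{const:aoneinvariantpicentralseries}, and derive functoriality from Proposition~\ref{prop:functorialpicentralseries} together with the improved relative Hurewicz Theorem~\ref{thm:relativeHurewicz} and Proposition~\ref{prop:functorialityofaonecoinvariants}. Your write-up simply unpacks what the paper compresses into two sentences.
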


\begin{proof}
	Repeat the proof of \cite[Corollary 4.2.4]{AFHLocalization} using the $\aone$-lower central series for the action of $\bpi_1(\mathscr{E})$ on $\fib(f)$ of Construction~\ref{const:aoneinvariantpicentralseries}.  Functoriality follows from Proposition~\ref{prop:functorialpicentralseries} upon replacing appeals to \cite[Theorem 4.2.1]{AFHLocalization} with appeals to Theorem~\ref{thm:relativeHurewicz} (in view of Proposition~\ref{prop:functorialityofaonecoinvariants}).
\end{proof}




With the above preparations at hand, we can now establish the analog of the Whitehead theorem for nilpotent motivic spaces extending \cite[Theorem 4.2.2]{AFHLocalization}.  In order to formulate the next theorem, write $\SH^{S^1}(k)$ for the stabilization of $\ho{k}$ in the sense of \cite[\S1.4]{HA}.  We write $\Sigma^{\infty}_{S^1}: \ho{k}_* \adj \SH^{S^1}(k): \Omega^\infty$ for the canonical adjunction.  The functor $\Sigma^\infty_{S^1}$ preserves cofiber sequences, being a left adjoint.

\begin{theorem}
	\label{thm:whitehead}
Assume $k$ is a perfect field.  Suppose $f: \mathscr{X} \to \mathscr{Y}$ is a (pointed) morphism of nilpotent motivic spaces.  If $n \geq 0$ is an integer, the following statements are equivalent.
\begin{enumerate}[noitemsep,topsep=1pt] 
	\item The map $f$ has $(n-1)$-connected fibers.
	\item The map $\Sigma^{\infty}_{S^1}f$ has $(n-1)$-connected fiber.
	\item The maps $\H_{i}^{\aone}(f)$ are isomorphisms for $i < n$ and $\H_{n}^{\aone}(f)$ is an epimorphism.
\end{enumerate}
 \end{theorem}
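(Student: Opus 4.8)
The implications $(1)\Rightarrow(2)\Rightarrow(3)$ are the easy direction and do not use nilpotence. For $(1)\Rightarrow(2)$, Theorem~\ref{thm:refinedsimplicialsuspension}(2) says $\mathscr{X}\to\Omega^\infty_{S^1}\Sigma^\infty_{S^1}\mathscr{X}$ has highly connected fiber, and more to the point $\Sigma^\infty_{S^1}$ preserves connectivity of fibers: replacing $f$ by its fiber sequence $\fib(f)\to\mathscr{X}\to\mathscr{Y}$ and using that $\Sigma^\infty_{S^1}$ is exact and sends an $(n-1)$-connected space to an $(n-1)$-connected spectrum (Theorem~\ref{thm:unstableconnectivity} plus the description of connectivity in $\SH^{S^1}$), one gets that $\Sigma^\infty_{S^1}f$ has $(n-1)$-connected fiber. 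For $(2)\Rightarrow(3)$, recall $\H^{\aone}_i(\mathscr{X})=\bpi_i(\Sigma^\infty_{S^1}\mathscr{X})$ (Morel), so the long exact sequence of the cofiber/fiber sequence of $\Sigma^\infty_{S^1}f$ together with the connectivity of its fiber gives exactly that $\H^{\aone}_i(f)$ is an isomorphism for $i<n$ and an epimorphism for $i=n$.

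\textbf{The main direction is $(3)\Rightarrow(1)$}, and this is where nilpotence is essential. The plan is to argue by building up $f$ through its \emph{functorial} principal Moore--Postnikov refinement, which exists because $f$ is a nilpotent morphism (Theorem~\ref{thm:nilpotentprincipalrefinement}); actually one first reduces to the case that $\mathscr{X}$ and $\mathscr{Y}$ are themselves nilpotent \emph{and} connected, and then, since a morphism of nilpotent connected spaces inducing iso on $\bpi_0$-level data and homology is what we want, one may as well assume both are connected. The heart of the argument is the case $n=1$: show that if $\H^{\aone}_0(f)$ is iso and $\H^{\aone}_1(f)$ is epi then $f$ has connected fiber, equivalently $\bpi_1(f)$ is an epimorphism and the induced map on $\bpi_0$ of fibers is trivial. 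For the fundamental groups this is precisely Proposition~\ref{prop:nilpotent-epi} (surjectivity of abelianizations forces surjectivity for locally nilpotent targets), applied to $\bpi_1(\mathscr{X})\to\bpi_1(\mathscr{Y})$, after noting $\bpi_1(\mathscr{X})^{ab}_{\aone}=\H^{\aone}_1(\mathscr{X})$ for connected $\mathscr{X}$. Then one uses the Stallings-type Proposition~\ref{prop:stallings} to upgrade: once $\H_1$ is forced to be an iso on $\bpi_1^{ab}_{\aone}$, the map $\bpi_1(\mathscr{X})\to\bpi_1(\mathscr{Y})$ is an isomorphism (both being nilpotent), hence $\fib(f)$ is connected.

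\textbf{For general $n$} I would induct, peeling off one stage of the principal refinement at a time. Assuming $f$ has connected fiber, apply the relative Hurewicz theorem~\ref{thm:relativeHurewicz}: the lowest nonvanishing homotopy sheaf of $\fib(f)$, say $\bpi_m(\fib(f))$ with its (nilpotent) $\bpi_1(\mathscr{E})$-action, maps to $\H^{\aone}_{m+1}(\cof(f))$ as the initial strictly $\aone$-invariant quotient with trivial action; and $\H^{\aone}_{m+1}(\cof(f))$ is computed from the homology of $f$, which by hypothesis (3) vanishes below degree $n$. Combining this with the functorial $\bpi_1$-central series $\Gamma^i_{\bpi_1(\mathscr{E})}\bpi_m(\fib(f))$ from Proposition~\ref{prop:functorialpicentralseries} and the same ``abelianization surjective $\Rightarrow$ surjective'' trick at the level of modules (this is the module analog already packaged in Proposition~\ref{prop:nilpotent-epi}, applied to subquotients of the central series), one forces $\bpi_m(\fib(f))=0$ for all $m<n$, i.e.\ $\fib(f)$ is $(n-1)$-connected.

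\textbf{Expected main obstacle.} The delicate point is the bookkeeping in the inductive step for $n\ge 2$: one must show that triviality of the \emph{coinvariants} $\H^{\aone}_0(\bpi_1(\mathscr{E});\bpi_m(\fib(f)))$ (which is what the relative Hurewicz theorem controls, via $\H^{\aone}_{m+1}(\cof(f))$) together with $\aone$-nilpotence of the action forces the whole module $\bpi_m(\fib(f))$ to vanish — this is the module-theoretic shadow of Proposition~\ref{prop:nilpotent-epi} and requires climbing the functorial $\bpi_1$-central series, checking at each stage that the relevant subquotients are strictly $\aone$-invariant so that the abelian/coinvariants formalism of \S4 applies. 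A secondary subtlety is making sure the Moore--Postnikov tower converges in the relevant sense so that vanishing of all homotopy sheaves of $\fib(f)$ in a range actually yields $(n-1)$-connectedness of $\fib(f)$ as a sheaf, which is where the hypothesis that everything is pulled back from a perfect subfield (feeding into Theorem~\ref{thm:unstableconnectivity}) is used.
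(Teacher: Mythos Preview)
Your argument for $(3)\Rightarrow(1)$ is correct and is in fact more direct than the paper's. Once $\fib(f)$ is connected (the $n=1$ step), you apply Theorem~\ref{thm:relativeHurewicz} to $f$ itself: if $m<n$ is the lowest degree with $\bpi_m(\fib(f))\neq 0$, then $\H^{\aone}_{m+1}(\cof(f))$ is the initial strictly $\aone$-invariant quotient of $\bpi_m(\fib(f))$ on which $\bpi_1(\mathscr{X})$ acts trivially, and this vanishes by the long exact homology sequence together with hypothesis (3); since the action is $\aone$-nilpotent, climbing the $\bpi_1(\mathscr{X})$-central series forces $\bpi_m(\fib(f))=0$. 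The paper instead builds the principalized Moore--Postnikov tower $\{\mathscr{Y}_j\}$ explicitly and chases $\H^{\aone}_{n_0}$ and $\H^{\aone}_{n_0-1}$ through the layers to conclude that the bottom coefficient $\mathbf{A}_0$ vanishes. Both arguments terminate in the same lemma (an $\aone$-nilpotent $\bpi$-group or module with trivial $\aone$-coinvariants is zero), which you rightly identify as the main obstacle; it follows immediately from the definition, since each successive quotient in any $\aone$-$\bpi$-central series is strictly $\aone$-invariant with trivial action and hence receives only the zero map. Note that despite announcing you will ``peel off one stage of the principal refinement at a time,'' your sketched argument never actually touches the tower --- this is a feature, not a defect.

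Two small corrections. First, the Stallings step in your $n=1$ discussion is both unnecessary and unavailable: for $n=1$ hypothesis (3) gives only that $\H^{\aone}_1(f)$ is an epimorphism, so Proposition~\ref{prop:stallings} does not apply; but $\fib(f)$ connected is equivalent merely to $\bpi_1(f)$ surjective, which Proposition~\ref{prop:nilpotent-epi} already delivers. Second, for $(1)\Rightarrow(2)$, the functor $\Sigma^{\infty}_{S^1}$ does not preserve fiber sequences; the correct route (and the paper's) is via the cofiber: Proposition~\ref{prop:cofiberconnectivity}(1) gives that $\cof(f)$ is $n$-connected, $\Sigma^{\infty}_{S^1}$ preserves cofibers and connectivity, and stably $\fib(\Sigma^{\infty}_{S^1}f)\simeq\Omega\,\cof(\Sigma^{\infty}_{S^1}f)$.
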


\begin{proof}
The implications (1) $\Longrightarrow$ (2) $\Longrightarrow$ (3) follow by appeal to Proposition~\ref{prop:cofiberconnectivity}(1).  For the reverse implications we proceed as follows. Assume (3); we shall prove (1).

The statement is immediate if $n = 0$ since nilpotent spaces are connected by assumption.  If $n=1$ we must prove that $\bpi_1(f)$ is surjective knowing only that $\bpi_1(f)^{ab}_{\A^1}$ is surjective; this is precisely Proposition \ref{prop:nilpotent-epi}.

We therefore assume $n \ge 2$, in which case we know that $\bpi_1(f)$ is an epimorphism by what we just observed.  Then, by appeal to Theorem~\ref{thm:nilpotentprincipalrefinement} we know that the Moore--Postnikov factorization of $f$ admits a principal refinement.  In other words, there are fiber sequences 
\[ 
\mathscr Y_{j+1} \longrightarrow \mathscr Y_j \longrightarrow K(\mathbf A_j, n_j), \]
with $n_j$ a weakly increasing sequence of integers, $n_j \ge 2$, $\mathscr Y_0 = \mathscr Y$ and such that the connectivity of $\mathscr X \to \mathscr Y_N$ is tends to infinity as $N$ tends to $\infty$.  Without loss of generality, we may also assume that $\mathbf A_j \ne 0$.

Essentially by construction, $n_0 = t+1$ if and only if $\bpi_t \fib(f)$ is the first non-vanishing homotopy group. Our goal is thus to prove that $n_0 \ge n+1$.
Suppose, in the interest of obtaining a contradiction that $n_0 \le n$.  The long exact homology sequence for the cofiber of $\mathscr Y_{j+1} \to \mathscr Y_j$ together with the relative Hurewicz Theorem \ref{thm:relativeHurewicz} yield (using that the action of $\bpi_1 \mathscr X$ on $\mathbf A_j$ is trivial) 
\[ 
\H_{n_0}^{\A^1}(\mathscr Y_{j+1}) \longrightarrow \H_{n_0}^{\A^1}(\mathscr Y_j) \longrightarrow \mathbf A_j' \longrightarrow \H_{n_0-1}^{\A^1}(\mathscr Y_{j+1}) \longrightarrow \H_{n_0-1}^{\A^1}(\mathscr Y_j) \longrightarrow 0; 
\] 
where $\mathbf A_j' = \mathbf A_j$ if $n_j = n_0$ and $\mathbf A_j' = 0$ otherwise.
We deduce that $\H_{n_0-1}^{\A^1}(\mathscr Y_{j+1}) \to \H_{n_0-1}^{\A^1}(\mathscr Y_{j})$ is surjective for every $j$.
Since $\mathscr X \to \mathscr Y_N$ is highly connected and we have already established that (1) implies (3), we find that $\H_{n_0-1}^{\A^1}(\mathscr X) \to \H_{n_0-1}^{\A^1}(\mathscr Y_N) \to \H_{n_0-1}^{\A^1}(\mathscr Y_1)$ is surjective.
On the other hand since $n_0 \le n$, $\H_{n_0-1}^{\A^1}(\mathscr X) \to \H_{n_0-1}^{\A^1}(\mathscr Y)$ is an isomorphism, whence also $\H_{n_0-1}^{\A^1}(\mathscr Y_1) \weq \H_{n_0-1}^{\A^1}(\mathscr Y)$.
Finally since $\H_{n_0}^{\A^1}(\mathscr X) \to \H_{n_0}^{\A^1}(\mathscr Y)$ is surjective so is $\H_{n_0}^{\A^1}(\mathscr Y_1) \to \H_{n_0}^{\A^1}(\mathscr Y)$, whence we get (from the above six term exact sequence) 
\[ 
0 \longrightarrow \mathbf A_0 \longrightarrow \H_{n_0-1}^{\A^1}(\mathscr Y_1) \longrightarrow \H_{n_0-1}^{\A^1}(\mathscr Y) \longrightarrow 0. 
\]
Since the second map is an isomorphism as shown above, we learn that $\mathbf A_0 = 0$, which is the desired contradiction.
\end{proof}

\begin{rem}
	Assume $\mathscr{X}$ is a pointed connected motivic space with $\bpi_1(\mathscr{X}) =: \bpi_1$.  The map $\mathscr{X} \to B\bpi_1$ has $1$-connected fibers, and the argument for Theorem~\ref{thm:whitehead} implies the motivic analog of a result of Hopf \cite[Satz II]{Hopffundgroup} that $\H_2^{\aone}(\mathscr{X}) \to \H_2^{\aone}(B\bpi_1)$ is an epimorphism.
\end{rem}  

\begin{rem}
	Of course, the Whitehead theorem for nilpotent motivic spaces could also be deduced following Bousfield \cite[Lemma 8.9]{Bousfieldhomologylocalization}.  Consider Morel's $\aone$-homology theory $\H^{\aone}_*$.  We can localize $\ho{k}$ with respect to this homology theory; following \cite[Definition 4.3.1]{AFHLocalization} this localization yields the $\Z$-$\aone$-local homotopy category.  A morphism $f: \mathscr{X} \to \mathscr{Y}$ is an $\H^{\aone}_*$-local equivalence if it induces an isomorphism after applying $\H^{\aone}_*$.  Theorem~\ref{thm:whitehead} implies that nilpotent motivic spaces are $\H^{\aone}_*$-local.  
	
	Ignoring base-point issues, one could check the converse as follows.  First, observe that $K(\mathbf{A},n)$ is an $\H^{\aone}_*$-local space for any $n \geq 0$.  Indeed, this follows from the existence of the universal coefficient spectral sequence (see \cite[Proof of Proposition 4.1.2]{AFHLocalization}). In that case, any limit of $\H^{\aone}_{*}$-local spaces is $\H^{\aone}_*$-local.  The existence of a principal refinement of the Postnikov tower then implies that nilpotent motivic spaces are $\H^{\aone}_*$-local.
\end{rem}

\subsection*{Aside: Moore--Postnikov towers for locally $\A^1$-nilpotent morphisms}
In \cite[Definition 3.3.1]{AFHLocalization}, the notion of a locally $\A^1$-nilpotent morphism of motivic spaces was also introduced: these are morphisms $f: \mathscr{E} \to \mathscr{B}$ such that $\fib(f)$ is connected, and the action of $\bpi_1(\mathscr{E})$ on $\bpi_1(\fib(f))$ is locally $\aone$-nilpotent for all $i \geq 1$.  The functorial principal refinements of Moore--Postnikov factorizations described above can be extended to locally $\A^1$-nilpotent morphisms of motivic spaces by limiting processes; we briefly explain this here.  

Let us revisit the construction of principal refinements of nilpotent morphisms.
Suppose $f: \mathscr{E} \to \mathscr{B}$ is a morphism of pointed, connected motivic spaces such that $\bpi_1(f)$ is an epimorphism.  Set $\mathscr F = \fib(f)$.  Set $\mathscr E_i = \tau_{\le i} \mathscr E$, where the Postnikov truncation is taken in the topos $\Shv_\Nis(\Sm_k)_{/\mathscr B}$.
Viewed as an object of $\Shv_\Nis(\Sm_k)$ (i.e., ignoring the map to $\mathscr B$), $\mathscr E_i$ is just $\tau_{\le i} \mathscr F$.\todo{ref?}
The object $\mathscr E_{i+1} \in \Shv_\Nis(\Sm_k)_{/\mathscr E_i}$ is an $(i+1)$-gerbe and hence (at least if $i > 1$) classified by a map $* \to K_{\mathscr E_i}(\mathbf A_{i+1}, i+2) \in \Shv_\Nis(\Sm_k)_{/\mathscr E_i}$ \cite[Theorem 7.2.2.26]{HTT}, for some $\mathbf A_{i+1} \in \mathrm{Ab}(\Shv_\Nis(\Sm_k)_{/\mathscr E_i})$.\todo{say something about $i=1$?}
Concretely, $\mathbf A_{i+1} = \bpi_{i+1} \mathscr F$, with its action by $\bpi_1 \mathscr E_i \weq \bpi_1 \mathscr E$.

Suppose given a (strongly $\A^1$-invariant) quotient $\mathbf A_{i+1,0} := \mathbf A_{i+1} \to \mathbf Q_{i+1,1}$.  Let $\mathscr E_{i,1}$ be the fiber of the composite $\mathscr E_{i,0} := \mathscr E_i \to K_{\mathscr E_i}(\mathbf Q_{i+1,1}, i+2)$.  Then there is a canonical map $\mathscr E_{i+1} \to \mathscr E_{i,1}$ exhibiting $\mathscr E_{i+1} \in \Shv_\Nis(\Sm_k)_{/\mathscr E_{i,1}}$.
This is another $(i+1)$-gerbe, this time banded by $\mathbf A_{i+1,1} := \ker(\mathbf A_{i+1,0} \to \mathbf Q_{i+1,1})$, and hence classified by a map $* \to K_{\mathscr E_{i,1}}(\mathbf A_{i+1,1}, i+2) \in \Shv_\Nis(\Sm_k)_{/\mathscr E_{i,1}}$.  Provided we are given a decreasing filtration of $\mathbf A_{i+1}$, we can keep repeating this process.  Assuming furthermore that the action of $\bpi_1(\mathscr E)$ on $\mathbf Q_{i+1,j}$ is trivial, we obtain principalized fiber sequences 
\[ 
\mathscr E_{i,j+1} \longrightarrow \mathscr E_{i,j} \longrightarrow K(\mathbf Q_{i+1,j}, i+2) \in \Spc(k)_*. 
\]
If $f$ is nilpotent, then the filtrations can be chosen to be finite, and so $\mathscr E_{i,j} \weq \mathscr E_{i+1}$ for $j \gg 0$.  This way $\mathscr E$ is built from $\mathscr B$ by a sequence of principal fibrations.

Now suppose that $f$ is only locally $\A^1$-nilpotent.  Then all steps in this process can still be performed.  The only issue is that we may not have $\mathscr E_{i,j} \weq \mathscr E_{i+1}$ for any finite $j$.  However, we still have the following.

\begin{proposition} 
	\label{prop:generalized-postnikov}
Let $f: \mathscr E \to \mathscr B$ be a locally $\A^1$-nilpotent morphism of motivic spaces.  Using notation as above, we have 
\[ 
\mathscr E_i \weq \lim_j \mathscr E_{i,j}. 
\]
\end{proposition}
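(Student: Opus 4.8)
First I would turn the statement into the assertion that a specific comparison map is an equivalence. The maps $\mathscr E_{i+1}\to\mathscr E_{i,j}$ exhibiting $\mathscr E_{i+1}$ as an object over each $\mathscr E_{i,j}$ are compatible with the tower, so they assemble into a map $\Phi\colon\mathscr E_{i+1}\to\lim_j\mathscr E_{i,j}$; the claim (as the discussion preceding it demands) is that $\Phi$ is an equivalence, i.e. that the principalization tower, although not finite, still converges to the $(i{+}1)$-truncation $\mathscr E_{i+1}$. Since equivalences in $\Shv_\Nis(\Sm_k)_{/\mathscr B}$ are detected by the forgetful functor to $\Shv_\Nis(\Sm_k)$, I would work in $\Shv_\Nis(\Sm_k)$ and recall the structure already set up: with $\mathscr F=\fib(f)$ and $\mathbf A_{i+1}=\bpi_{i+1}\mathscr F$, the tower carries a decreasing filtration $\mathbf A_{i+1}=\mathbf A_{i+1,0}\supseteq\mathbf A_{i+1,1}\supseteq\cdots$ by (strongly $\A^1$-invariant) subsheaves with $\mathbf A_{i+1,j-1}/\mathbf A_{i+1,j}=\mathbf Q_{i+1,j}$, and for each $j$ the map $\mathscr E_{i+1}\to\mathscr E_{i,j}$ is an $(i{+}1)$-gerbe banded by $\mathbf A_{i+1,j}$, so that $\fib(\mathscr E_{i+1}\to\mathscr E_{i,j})\weq K(\mathbf A_{i+1,j},i+1)$.

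Next I would reduce to a local computation. For $U\in\Sm_k$ the restriction functor $\Shv_\Nis(\Sm_k)\to\Shv_\Nis(\Sm_U)$ (evaluation on smooth $U$-schemes) admits both a left and a right adjoint, hence preserves all limits and colimits — in particular truncations, homotopy sheaves, Eilenberg--Mac Lane objects and gerbes, and tower limits; the same holds for the induced functor on slices over $\mathscr B$. Therefore restriction along $U$ carries the Moore--Postnikov tower of $f$ over $\mathscr B$, its principal refinement, the maps $\mathscr E_{i+1}\to\mathscr E_{i,j}$, the object $\lim_j\mathscr E_{i,j}$, and the map $\Phi$ to the analogous data for $f|_U$. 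Because a morphism of Nisnevich sheaves is an equivalence as soon as it becomes one after restriction to every member of a Nisnevich cover, it suffices to produce a Nisnevich cover over which $\Phi$ restricts to an equivalence.

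This is where local nilpotency of $f$ is used: it provides a Nisnevich cover $\{U_\alpha\}$ and integers $N_\alpha$ with $\mathbf A_{i+1,j}|_{U_\alpha}=0$ for all $j\ge N_\alpha$ — that is, the given filtration, while not finite, is Nisnevich-locally finite. Fixing $U=U_\alpha$: for $j>N_\alpha$ the layers $\mathscr E_{i,j}|_U\to\mathscr E_{i,j-1}|_U$ have fiber $K(\mathbf Q_{i+1,j}|_U,i+1)=\ast$, so the restricted tower is eventually constant and $\lim_j\mathscr E_{i,j}|_U\weq\mathscr E_{i,j}|_U$ for $j\gg 0$; and $\mathscr E_{i+1}|_U\to\mathscr E_{i,j}|_U$ is an $(i{+}1)$-gerbe banded by $\mathbf A_{i+1,j}|_U=0$, hence an equivalence, for $j\gg 0$. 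Composing these identifications shows $\Phi|_{U_\alpha}$ is an equivalence for every $\alpha$, and thus $\Phi$ is an equivalence. (Equivalently one computes $\fib(\Phi)\weq\lim_j\fib(\mathscr E_{i+1}\to\mathscr E_{i,j})\weq\lim_j K(\mathbf A_{i+1,j},i+1)$ and observes that this restricts to $\lim_j K(0,i+1)\weq\ast$ on each $U_\alpha$; this phrasing makes explicit that the real input is the vanishing of the derived limit $R\lim_j\mathbf A_{i+1,j}$, and that the weaker condition $\bigcap_j\mathbf A_{i+1,j}=0$ would not by itself suffice, because of possible $\lim^1$ contributions.)

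The main obstacle is the passage to the local situation: one must (i) check carefully that the construction of the principal refinement commutes with restriction along objects of $\Sm_k$ — which is why I would stress the ``both adjoints'' property of the restriction functor, and that it is genuinely the inverse image of a geometric morphism — and (ii) extract from local nilpotency the statement that the filtrations $\{\mathbf A_{i+1,j}\}$ actually become trivial on a Nisnevich cover, not merely that their intersection vanishes. Given these two points, the argument is just the reduction to the finite (nilpotent) case, where the tower stabilizes after finitely many steps.
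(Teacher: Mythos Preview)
Your overall strategy---localize, show the tower stabilizes after finitely many steps there, and conclude---is exactly the paper's. The difference, and the genuine gap in your argument, is precisely the point you flag as obstacle (ii): you assert that local nilpotency ``provides a Nisnevich cover $\{U_\alpha\}$ and integers $N_\alpha$ with $\mathbf A_{i+1,j}|_{U_\alpha}=0$ for $j\ge N_\alpha$'', but you never explain why such a cover exists. Local nilpotency is a stalkwise condition: it says that at every (generic) point the filtration has finite length, but the length may a priori vary from point to point, and nothing in the definition hands you an open set with a uniform bound.

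The paper closes this gap with a specific structural input you do not mention: strongly $\A^1$-invariant sheaves are \emph{unramified} in the sense of Morel \cite[Corollary~6.9]{MField}. Concretely, for connected $X\in\Sm_k$ the restriction of such a sheaf to $X_{\Zar}$ vanishes as soon as it vanishes at the generic point of $X$. Since each $\mathbf A_{i+1,j}$ is strongly $\A^1$-invariant, local nilpotency at the generic point of $X$ forces $\mathbf A_{i+1,j}|_{X_{\Zar}}=0$ for all $j$ beyond the generic nilpotency length, giving the uniform bound on all of $X$ at once. The paper therefore restricts to the small Zariski site of each connected $X$ (rather than to a Nisnevich cover, as you do), which is technically a bit lighter: restriction to $X_{\Zar}$ clearly preserves limits and the relevant constructions, and checking the equivalence on every such $X$ suffices. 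Your parenthetical remark that vanishing of $R\lim_j \mathbf A_{i+1,j}$ is the real input is correct, but to obtain that vanishing you still need the unramifiedness step---without it your argument is incomplete.
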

\begin{proof}
Let $X \in \Sm_k$ be connected.
It will suffice to show the equivalence after restricting to the small Zariski site $X_{\Zar}$ (note that this restriction preserves limits).
By assumption the filtration on the generic stalk of $X$ is of finite length.
Since the $\mathbf A_{i,j}$ are unramified \cite[Corollary 6.9]{MField} we deduce that $\mathbf A_{i,j}|_{X_{\Zar}} = 0$ for $j \gg 0$.
Hence $\mathscr E_{i+1}|_{X_{\Zar}} \weq \mathscr E_{i,j}|_{X_{\Zar}}$ for such $j$.
This proves the claim.
\end{proof}

\begin{cor}
Locally $\A^1$-nilpotent motivic spaces lie in the subcategory of $\Spc(k)_*$ generated under (iterated) limits by $S^1$-infinite loop spaces.
In particular, if $f: \mathscr X \to \mathscr Y \in \Spc(k)_*$ is a morphism of locally $\A^1$-nilpotent motivic spaces such that $\Sigma^\infty_{S^1} f$ is an equivalence, then $f$ is an equivalence.
\end{cor}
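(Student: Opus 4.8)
The plan is to prove the first, more general, assertion --- that every locally nilpotent motivic space lies in the smallest full subcategory $\mathcal C \subseteq \Spc(k)_*$ that contains all $S^1$-infinite loop spaces and is closed under limits --- and then deduce the conservativity statement purely formally. First I would record the properties of $\mathcal C$ that will be used: it contains the terminal object $*$ (the empty limit), it is closed under pullbacks, and it contains every Eilenberg--Mac Lane space $K(\mathbf A, n)$ with $\mathbf A$ strictly $\A^1$-invariant and $n \ge 0$, since $K(\mathbf A, n) \simeq \Omega^\infty_{S^1}\Sigma^n H\mathbf A$, where $H\mathbf A \in \SH^{S^1}(k)$ is the Eilenberg--Mac Lane $S^1$-spectrum attached to the object $\mathbf A$ of the heart of Morel's homotopy $t$-structure. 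Consequently, whenever $\mathscr B \in \mathcal C$ and $\mathscr E \to \mathscr B$ is a principal fibration with Eilenberg--Mac Lane fiber $K(\mathbf A, n)$ --- i.e. $\mathscr E \simeq \mathscr B \times_{K(\mathbf A, n+1)} *$ for some classifying map $\mathscr B \to K(\mathbf A, n+1)$ --- one has $\mathscr E \in \mathcal C$.

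The base case is to show $B\mathbf G \in \mathcal C$ for any locally nilpotent $\mathbf G \in \Grp^{\A^1}_k$. Here I would use the $\A^1$-lower central series of Definition~\ref{defn:lowercentralseries}: by Lemma~\ref{lem:inductivestepLCC} each layer $\mathbf A_i := \Gamma^i_{\A^1}\mathbf G/\Gamma^{i+1}_{\A^1}\mathbf G$ is strictly $\A^1$-invariant, and it is central in $\mathbf G/\Gamma^{i+1}_{\A^1}\mathbf G$, so the central extensions $1 \to \mathbf A_i \to \mathbf G/\Gamma^{i+1}_{\A^1}\mathbf G \to \mathbf G/\Gamma^i_{\A^1}\mathbf G \to 1$ yield principal fibrations with fiber $K(\mathbf A_i, 1)$. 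Since $\mathbf G/\Gamma^1_{\A^1}\mathbf G = *$, an induction on $i$ puts every $B(\mathbf G/\Gamma^i_{\A^1}\mathbf G)$ in $\mathcal C$. To pass to $B\mathbf G$ I would argue exactly as in the proof of Proposition~\ref{prop:generalized-postnikov}: after restricting to the small Zariski site of a connected $X \in \Sm_k$, local nilpotency forces $\Gamma^i_{\A^1}\mathbf G$ to vanish at the generic point of $X$ for $i \gg 0$, and since these sheaves are unramified \cite[Corollary 6.9]{MField} the tower $(\mathbf G/\Gamma^i_{\A^1}\mathbf G)|_{X_{\Zar}}$ is eventually constant; hence $B\mathbf G \simeq \lim_i B(\mathbf G/\Gamma^i_{\A^1}\mathbf G) \in \mathcal C$.

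For a general locally nilpotent motivic space $\mathscr X$, I would use convergence of the Postnikov tower, $\mathscr X \simeq \lim_i \tau_{\le i}\mathscr X$, and prove $\tau_{\le i}\mathscr X \in \mathcal C$ by induction on $i$. The cases $i \le 1$ are the previous paragraph, since $\tau_{\le 0}\mathscr X = *$ and $\tau_{\le 1}\mathscr X \simeq B\bpi_1\mathscr X$ with $\bpi_1\mathscr X$ locally nilpotent. For $i \ge 1$, the Postnikov map $g : \tau_{\le i+1}\mathscr X \to \tau_{\le i}\mathscr X$ has fiber $K(\bpi_{i+1}\mathscr X, i+1)$ on which $\bpi_1\mathscr X$ acts locally nilpotently, with $\bpi_1(g)$ an isomorphism; thus $g$ is a locally nilpotent morphism with epimorphic $\bpi_1$, and the construction recalled in the Aside applies. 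Taking the filtration of $\bpi_{i+1}\mathscr X$ there to be the $\A^1$-lower central series for the $\bpi_1\mathscr X$-action (Construction~\ref{const:aoneinvariantpicentralseries}), whose layers $\mathbf Q_{i+1,j}$ are strictly $\A^1$-invariant with trivial $\bpi_1\mathscr X$-action, that construction together with Proposition~\ref{prop:generalized-postnikov} presents $\tau_{\le i+1}\mathscr X$ as $\lim_j \mathscr E_{i,j}$ with $\mathscr E_{i,0} = \tau_{\le i}\mathscr X$ and each $\mathscr E_{i,j+1} \to \mathscr E_{i,j}$ a principal fibration with Eilenberg--Mac Lane fiber on a strictly $\A^1$-invariant sheaf; as $\tau_{\le i}\mathscr X \in \mathcal C$, this forces $\mathscr E_{i,j} \in \mathcal C$ for all $j$, hence $\tau_{\le i+1}\mathscr X \in \mathcal C$. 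The step I expect to be the main obstacle lies precisely here: one must be careful that the Postnikov truncations of a motivic space are again motivic spaces, that the tower converges (which is presumably where a hypothesis such as $k$ perfect is needed), and that each truncation map is a locally nilpotent morphism in the exact sense to which the Aside and Proposition~\ref{prop:generalized-postnikov} apply; the rest, including the unramifiedness inputs, is a matter of assembling already-available results.

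To deduce the second assertion, I would let $\mathcal D \subseteq \Spc(k)_*$ be the full subcategory of objects $Z$ for which $f^\ast : \Map(\mathscr Y, Z) \to \Map(\mathscr X, Z)$ is an equivalence. Since $\Map(\mathscr W, -)$ preserves limits, $\mathcal D$ is closed under limits; and for $Z = \Omega^\infty_{S^1} E$ the adjunction $\Sigma^\infty_{S^1} \dashv \Omega^\infty_{S^1}$ identifies $f^\ast$ with precomposition by $\Sigma^\infty_{S^1} f$, an equivalence by hypothesis, so $\mathcal D$ contains every $S^1$-infinite loop space. Therefore $\mathcal D \supseteq \mathcal C$, and in particular $\mathscr X, \mathscr Y \in \mathcal D$ by the first assertion. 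Lifting $\mathrm{id}_{\mathscr X}$ through the equivalence $f^\ast : \Map(\mathscr Y, \mathscr X) \to \Map(\mathscr X, \mathscr X)$ produces $g : \mathscr Y \to \mathscr X$ with $g f \simeq \mathrm{id}_{\mathscr X}$; then $f g$ and $\mathrm{id}_{\mathscr Y}$ both pull back to $f$ along the equivalence $f^\ast : \Map(\mathscr Y, \mathscr Y) \to \Map(\mathscr X, \mathscr Y)$, whence $f g \simeq \mathrm{id}_{\mathscr Y}$, and $f$ is an equivalence.
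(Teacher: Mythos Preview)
Your proposal is correct and follows essentially the same approach as the paper: build $\mathscr X$ from $*$ via the Postnikov tower, refine each layer into principal fibrations using the $\A^1$-lower central series for the $\bpi_1$-action, and invoke Proposition~\ref{prop:generalized-postnikov} for convergence; then deduce conservativity by the Yoneda-style argument that the class of $\mathscr Z$ with $\Map_*(\mathscr Y,\mathscr Z)\simeq\Map_*(\mathscr X,\mathscr Z)$ is closed under limits and contains $S^1$-infinite loop spaces. Your treatment is in fact more explicit than the paper's (which simply says ``apply the construction of Proposition~\ref{prop:generalized-postnikov} to $\mathscr X\to *$''): you separate out the base case $\tau_{\le 1}\mathscr X\simeq B\bpi_1\mathscr X$ and handle it directly via the $\A^1$-lower central series of $\bpi_1\mathscr X$ together with the unramifiedness argument, thereby filling in the $i=1$ step that the Aside leaves as a parenthetical caveat.
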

\begin{proof}
The first statement is clear by considering the construction of Proposition \ref{prop:generalized-postnikov} for $\mathscr X \to *$.
For the second statement, by Yoneda, it suffices to show that for any locally $\A^1$-nilpotent motivic space $\mathscr Z$ we have $\Map_*(\mathscr X, \mathscr Z) \weq \Map_*(\mathscr Y, \mathscr Z)$.
The class of spaces $\mathscr Z$ such that this holds is closed under (iterated) limits and contains all $S^1$-infinite loop spaces.
Hence we conclude by the first statement.
\end{proof}

\section{Applications} \tom{I have not revised this}
In this section, we describe some applications of the Whitehead theorem building on a comment made in the introduction: in checking that a morphism between spaces is an $S^1$-stable equivalence, singular spaces may intercede.  In order to make sense of this principle, we begin by recalling how to define $S^1$-stable homotopy types of singular (ind-schemes) in characteristic $0$.  Then, we analyze the main example: the Schubert filtration on the affine Grassmannian for $SL_2$.  Using this filtration we are able to deduce Theorem~\ref{thmintro:exoticequivalence}, which provides an exotic equivalence in unstable motivic homotopy theory. 

In what follows, if $\mathscr{X}$ is a pointed space, then we write $\Omega^{p,q}\mathscr{X}$ for the internal (pointed) mapping space $\iMap_*(S^{p,q},\mathscr{X})$.  Before getting to the more complicated case mentioned above, we describe a simple motivating example.

\begin{ex}
	Consider the space ${\mathbb P}^{\infty} \cong B\gm{}$ \cite[\S 4 Proposition 3.8]{MV}.  In fact, ${\mathbb P}^{\infty} = \colim_n {\mathbb P}^n$ and in particular there is an inclusion $\pone \hookrightarrow {\mathbb P}^{\infty}$ of the ``bottom cell".  Under the equivalence $\pone \cong S^{2,1}$, this map is adjoint to a map
	\[
	S^1 \longrightarrow \Omega^{1,1} {\mathbb P}^{\infty}.
	\]
	The space ${\mathbb P}^{\infty}$ has a commutative $h$-space structure that is inherited by $\Omega^{1,1} {\mathbb P}^{\infty}$ and the displayed map is an $h$-map for this structure.  In fact, the above map is an equivalence by explicit computation.  Indeed, $\Omega^{1,1} {\mathbb P}^{\infty}$ is connected, $1$-truncated, and an explicit computation shows that $\bpi_1(\Omega^{1,1}{\mathbb P}^{\infty}) \cong (\K^M_1)_{-1} \cong \Z$ and the displayed map induces an isomorphism on $\bpi_1$ and is thus an equivalence.
\end{ex}

\subsubsection*{$S^1$-stable homotopy types of singular varieties}
Write $\Sch^{ft}_k$ for the category of {\em finite type} $k$-schemes and consider the inclusion functor $\Sm_k \hookrightarrow \Sch^{ft}_k$ (which is fully-faithful).  There is an induced restriction functor $R: \mathrm{P}(\Sch^{ft}_k) \to \mathrm{P}(\Sm_k)$, where $\mathrm{P}(\Sch^{ft}_k)$ is the $\infty$-category of presheaves of spaces on $\Sch^{ft}_k$.  We would like to show $\mathscr{X} \in \mathrm{P}(\Sch^{ft}_k)$ has a well-defined homotopy type.  To make this precise, recall that one may equip $\Sch^{ft}_k$ with the structure of a site as follows.  Consider the cd-structure \cite[Definition 2.1.1]{AHW} on $\Sch^{ft}_k$ generated by Nisnevich distinguished squares and squares arising from diagrams of the form $Z \stackrel{i}{\to} X \stackrel{p}{\leftarrow} Y$ where $i$ is a closed immersion, $p$ is an isomorphism on the complement of $i$.  The cd-structure just described defines a topology on $\Sch^{ft}_k$ called the cdh-topology.

Recall that $\mathscr{X} \in \mathrm{P}(\Sch^{ft}_k)$ is {\em cdh-excisive} if $\mathscr{X}(\emptyset)$ is contractible and if $\mathscr{X}$ takes squares in the cd-structure described above to cartesian squares.  We define $\mathrm{H}^{ft}(k)$ to be the subcategory of $\mathrm{P}(\Sch^{ft}_k)$ spanned by presheaves of spaces that are cdh-excisive and $\aone$-invariant (see the beginning of Section~\ref{ss:motiviclocalization}).  As before there is a motivic localization functor 
\[
\mathrm{L}_{mot}^{cdh}: \mathrm{P}(\Sch^{ft}_k) \longrightarrow \mathrm{H}^{ft}(k);
\]
that preserves finite products.  

The restriction functor sends $cdh$-excisive and $\aone$-invariant sheaves to Nisnevich and $\aone$-invariant sheaves on $\Sm_k$ and one thus has a diagram of functors, which need not commute:
\[
\xymatrix{
 \mathrm{P}(\Sch^{ft}_k) \ar[r]^{R}\ar[d]^{\mathrm{L}_{mot}^{cdh}} & \mathrm{P}(\Sm_k) \ar[d]^{\mathrm{L}_{mot}} \\
  \mathrm{H}^{ft}(k) \ar[r] & \ho{k}.
}
\]
One can stabilize both sides with respect to the simplicial circle.  We write $\SH^{S^1}_{ft}(k)$ for the category obtained by inverting $S^1$ on $\mathrm{H}^{ft}(k)_*$ (as described in \cite[\S 4.1]{Robalo}).  Abusing terminology slightly, we write $\mathrm{L}_{mot}^{cdh}\mathscr{X}_+$ for the functor $\mathrm{P}(\Sch^{ft}_k) \to \SH^{S^1}_{ft}(k)$ and $\mathrm{L}_{mot}\mathscr{X}_+$ for the functor $\mathrm{P}(\Sm_k) \to \SH^{S^1}_{ft}(k)$.  

This functor $R$ has a left adjoint $E: \mathrm{P}(\Sm_k) \to \mathrm{P}(\Sch^{ft}_k)$.  The above inclusion functor corresponds to a morphism of sites
\[
\pi: (\Sch^{ft}_k)_{cdh} \longrightarrow (\Sm_k)_{\Nis}.
\]
Voevodsky established that the functor $\pi^*: \SH^{S^1}_{cdh}(k) \to \SH^{S^1}(k)$ is an equivalence \cite[Theorem 4.2]{VNisvscdh}.  The stable motivic homotopy type of $\mathscr{X} \in \mathrm{P}(\Sch^{ft}_k)$ is then defined to be $\pi^* \mathrm{L}_{mot}^{cdh}\mathscr{X}_+$.  The following result establishes that singular schemes have well-defined stable motivic homotopy types (cf. \cite[Proposition 17]{Bachmanngrassmannian}).

\begin{proposition}
	\label{prop:s1stablehomotopytypesofsingularschemes}
	Assume $k$ has characteristic $0$.  
	\begin{enumerate}[noitemsep,topsep=1pt]
		\item For any $\mathscr{X} \in \mathrm{P}(\Sch^{ft}_k)$, there is an equivalence 
	\[\mathrm{L}_{mot} R\mathscr{X}_+ \cong \pi^* \mathrm{L}_{mot}^{cdh}\mathscr{X}_+.\]
	\item If $X = \colim_n X_n$ is a filtered colimit of finite type schemes, then $\mathrm{L}_{mot}RX_+ \cong \colim_n \mathrm{L}_{mot}^{cdh} (X_n)_+$. 
	\end{enumerate}
\end{proposition}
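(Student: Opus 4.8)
The plan is to deduce both parts from Voevodsky's comparison of the Nisnevich and cdh topologies \cite[Theorem 4.2]{VNisvscdh} --- the result which furnishes precisely the functor $\pi^*$ and its being an equivalence --- together with the observation that essentially every other functor in sight preserves colimits.

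For part (1), I would begin by recording the formal properties of the restriction functor $R \colon \mathrm{P}(\Sch^{ft}_k) \to \mathrm{P}(\Sm_k)$. Being precomposition with the inclusion $\Sm_k \hookrightarrow \Sch^{ft}_k$, it preserves all limits and colimits, sends the terminal presheaf to the terminal presheaf, and sends the constant presheaf $S^1$ to itself; hence it commutes with $(-)_+$ and with $S^1 \wedge (-)$, and therefore induces a colimit-preserving functor on presheaves of $S^1$-spectra commuting with $\Sigma^{\infty}_{S^1}(-)_+$. Moreover $R$ carries $\aone$-invariant presheaves to $\aone$-invariant presheaves and, since every Nisnevich distinguished square of smooth schemes occurs among the distinguished squares of the cd-structure defining the cdh-topology, it carries cdh-excisive presheaves (of spaces, hence levelwise of spectra) to Nisnevich-excisive ones. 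By \cite[Theorem 4.2]{VNisvscdh} the resulting descended functor $\SH^{S^1}_{cdh}(k) \to \SH^{S^1}(k)$ is the equivalence $\pi^*$; in particular $R$ takes cdh-motivic stable equivalences to Nisnevich-motivic stable equivalences, so that the square relating $R$, $\pi^*$, and the two stabilized motivic localization functors commutes, and evaluating this commuting square at $\mathscr{X}$ yields the asserted equivalence $\mathrm{L}_{mot} R \mathscr{X}_+ \simeq \pi^* \mathrm{L}_{mot}^{cdh} \mathscr{X}_+$. (If one prefers to verify commutativity by hand: both composites $\mathrm{P}(\Sch^{ft}_k) \to \SH^{S^1}(k)$ preserve colimits and $\mathrm{P}(\Sch^{ft}_k)$ is generated under colimits by representables, so it suffices to treat $\mathscr{X} = X$ a single finite type scheme, and the content there --- comparing the Nisnevich and cdh motivic localizations of a possibly singular $X$ --- is again exactly \cite[Theorem 4.2]{VNisvscdh}.)

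For part (2), recall that $X = \colim_n X_n$ refers to the filtered colimit computed in $\mathrm{P}(\Sch^{ft}_k)$. Since $(-)_+$, $R$, $\mathrm{L}_{mot}$, and $\Sigma^{\infty}_{S^1}$ all preserve colimits, one obtains $\mathrm{L}_{mot} R X_+ \simeq \colim_n \mathrm{L}_{mot} R (X_n)_+$ in $\SH^{S^1}(k)$; by part (1) each term on the right is $\pi^* \mathrm{L}_{mot}^{cdh}(X_n)_+$, and since $\pi^*$ is an equivalence it commutes with the colimit, whence $\mathrm{L}_{mot} R X_+ \simeq \pi^* \colim_n \mathrm{L}_{mot}^{cdh}(X_n)_+$, which is the assertion under the notational abuse of viewing cdh-motivic spectra in $\SH^{S^1}(k)$ via $\pi^*$. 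No input beyond part (1) and the cocontinuity of the functors involved is needed here.

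I expect the main obstacle to be concentrated entirely in part (1), and within it in the appeal to \cite[Theorem 4.2]{VNisvscdh}: it is this result, and the resolution of singularities it rests on --- hence the characteristic zero hypothesis --- that makes restriction compatible with the passage from the cdh- to the Nisnevich-motivic stable homotopy category. Everything else (the compatibility of $R$ with suspension spectra and with adding basepoints, and the reduction of part (2) to part (1)) is routine manipulation with colimit-preserving functors.
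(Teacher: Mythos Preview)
Your treatment of part (2) is fine and matches the paper's. The problem is in part (1), at the step ``in particular $R$ takes cdh-motivic stable equivalences to Nisnevich-motivic stable equivalences.'' This does \emph{not} follow formally from the fact that the restriction $R_0 \colon \SH^{S^1}_{cdh}(k) \to \SH^{S^1}(k)$ of $R$ to local objects is an equivalence. Knowing that $R_0$ (that is, $\pi_*$) is an equivalence tells you nothing a priori about what $R$ does to the localization map $A \to \mathrm{L}_{mot}^{cdh}A$ for non-local $A$. Unwinding, the assertion $\mathrm{L}_{mot} R \simeq R_0 \circ \mathrm{L}_{mot}^{cdh}$ becomes, after applying the inverse equivalence $\pi^*$ (so that $\pi^* \mathrm{L}_{mot} \simeq \mathrm{L}_{mot}^{cdh} E$), precisely the assertion that the counit $\eta \colon ER \to \Id$ is a cdh-motivic equivalence. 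Your alternative reduction to representables runs into the same wall: for singular $X$ the desired comparison $\mathrm{L}_{mot}(RX)_+ \simeq \pi_* \mathrm{L}_{mot}^{cdh}(X)_+$ is equivalent to $\mathrm{L}_{mot}^{cdh}(ERX)_+ \simeq \mathrm{L}_{mot}^{cdh}(X)_+$, and that statement is not the content of \cite[Theorem 4.2]{VNisvscdh} (which only asserts that $\pi^*$ is an equivalence).

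The paper supplies exactly this missing ingredient. Since $\Sm_k \hookrightarrow \Sch^{ft}_k$ is fully faithful one has $RE \simeq \Id$, whence $R\eta$ is an equivalence; and since $k$ has characteristic $0$, resolution of singularities shows that every finite-type $k$-scheme is cdh-locally smooth \cite[Lemma 4.3]{VNisvscdh}, from which one deduces that $\eta$ becomes an equivalence after $\mathrm{L}_{mot}^{cdh}(-)_+$. This is where the characteristic-zero hypothesis actually enters your argument, not merely through Theorem 4.2. Once you insert this step, your approach and the paper's coincide.
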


\begin{proof}
	Since the functor $\Sm_k \hookrightarrow \Sch^{ft}_k$ is fully-faithful, the composite $\mathrm{P}(\Sm_k) \stackrel{E}{\to} \mathrm{P}(\Sch^{ft}_k) \stackrel{R}{\to} \mathrm{P}(\Sm_k)$ is the identity functor on $\mathrm{P}(\Sm_k)$.  It follows that $RER \mathscr{X}_+ \cong R\mathscr{X}_+$, i.e., if $\eta: ER \to id$ is the counit of adjunction, then $R(\eta)$ is an equivalence.  On the other hand, since $k$ has characteristic $0$, resolution of singularities implies that all schemes are $cdh$-locally smooth \cite[Lemma 4.3]{VNisvscdh}.  Thus, the assertion that $R(\eta)$ is an equivalence is sufficient to guarantee that $\eta$ becomes an equivalence after applying $\mathrm{L}_{mot}^{cdh}(-)_+$, which is what we wanted to show. 
	
	The second statement follows from the first since all functors under consideration commute with filtered colimits.
\end{proof}

\begin{rem}
	\label{rem:positivechar}
	Assume that $k$ is a field with characteristic $p$ and assume $\ell$ is a prime different from $p$.  If strong resolution of singularities was known in positive characteristic, then Proposition~\ref{prop:s1stablehomotopytypesofsingularschemes} would extend to that situation as well.  Unfortunately, we are not aware of a version of Proposition~\ref{prop:s1stablehomotopytypesofsingularschemes} that holds, even assuming we invert the characteristic exponent of $p$, say working with the $\ell dh$-topology of \cite[Definition 2.1.11]{Kellyldh}, even though finite-type singular $k$-schemes are $\ell dh$-locally smooth \cite[Corollary 2.1.15]{Kellyldh}, this is not sufficient to guarantee $\ell dh$-descent for suspension spectra.  
\end{rem}

\subsubsection*{Affine Grassmannian models of Tate loop spaces}
Suppose $G$ is a split reductive group scheme (which we will always assume to be $SL_2$ below).  One model of the affine grassmannian $\op{Gr}_G$ is an ind-scheme representing the fppf sheafification of the functor on $\Sch_k$ defined by 
\[
U \mapsto G(\Gamma(U,\mathcal{O}_U)((t)))/G(\Gamma(U,\mathcal{O}_U)[[t]]);
\]  
see \cite[\S 2]{BeauvilleLaszlo} for more details in the case of interest for us (i.e., $G = SL_2$).  Quillen \cite{MitchellQuillen} and Garland--Raghunathan \cite{GarlandRaghunathan} observed that $\op{Gr}_G(\cplx)$ had the structure of an associative $h$-space: if $K$ is the maximal compact subgroup of $G(\cplx)$, then there is a homotopy equivalence $\Omega K \cong \op{Gr}_G$.  The following result can be viewed as a lift of these results to the motivic homotopy category and equips $\op{Gr}_G$ with an associative $h$-space structure in $\ho{k}$.

\begin{theorem}[{\cite[Theorem 15]{Bachmanngrassmannian}}]
	\label{thm:affinegrassmannianmodel}
If $k$ is a field, and $G$ is a split reductive group scheme, then there is an equivalence
\[
\Omega^{1,1} G \cong \op{Gr}_{G}.
\]
\end{theorem}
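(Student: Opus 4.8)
The plan is to carry Quillen's classical identification $\op{Gr}_G \simeq \Omega K$ over to $\aone$-homotopy theory, with algebraic $\Gm$-loops in place of the topologist's loops. As $G$ is affine, write $LG := \iMap(\Gm, G)$ and $L^+G := \iMap(\aone, G)$ for the presheaves $R \mapsto G(R[t,t^{-1}])$ and $R \mapsto G(R[t])$; the inclusion $R[t] \hookrightarrow R[t,t^{-1}]$ makes $L^+G$ a subgroup sheaf of $LG$. The one genuinely geometric ingredient I would invoke is the \emph{polynomial presentation} of the affine Grassmannian: the natural map $LG/L^+G \to \op{Gr}_G$ is an isomorphism of Nisnevich sheaves and $LG \to LG/L^+G$ is a Zariski-locally trivial $L^+G$-torsor. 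Surjectivity and local triviality follow from the Drinfeld--Simpson theorem (a $G$-torsor on $R[[t]]$ trivial over $R((t))$ is, Zariski-locally on $\Spec R$, trivial over $R[[t]]$), together with Beauville--Laszlo gluing over $\pone$ and the exhaustion of $\op{Gr}_G$ by finite-dimensional Schubert varieties, on which polynomial representatives of bounded degree are available. Granting this, $\op{Gr}_G$ is the \emph{homotopy} quotient $LG /\!\!/ L^+G$, so there is a fibre sequence $L^+G \to LG \to \op{Gr}_G$ in $\Shv_{\Nis}(\Sm_k)_*$; since the quotient is free and $\mathrm{L}_{mot}$ preserves colimits and finite products, applying $\mathrm{L}_{mot}$ gives a fibre sequence $\mathrm{L}_{mot}L^+G \to \mathrm{L}_{mot}LG \to \mathrm{L}_{mot}\op{Gr}_G$ in $\ho{k}$.

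Next I would record two motivic computations. (a) The evaluation maps $\mathrm{ev}_0, \mathrm{ev}_1 \colon L^+G = \iMap(\aone, G) \to G$ are $\aone$-homotopic, via $(\gamma, s) \mapsto \gamma(s)$, and $\mathrm{ev}_0$ is an $\aone$-homotopy equivalence of presheaves, the contracting $\aone$-homotopy on $\iMap(\aone, G)$ being $(\gamma, s) \mapsto (t \mapsto \gamma(st))$; hence both $\mathrm{ev}_0$ and the restriction of $\mathrm{ev}_1 \colon LG \to G$ (evaluation at the base point $1 \in \Gm$) to the subgroup $L^+G$ are motivic equivalences $\mathrm{L}_{mot}L^+G \weq \mathrm{L}_{mot}G$. (b) The map $\mathrm{ev}_1 \colon LG \to G$ is a split epimorphism of group sheaves (split by the constant loops) with kernel $\iMap_*(\Gm, G) = \Omega^{1,1}G$; this is once more a free quotient, so $\mathrm{L}_{mot}$ produces a fibre sequence $\mathrm{L}_{mot}\Omega^{1,1}G \to \mathrm{L}_{mot}LG \xrightarrow{\mathrm{ev}_1} \mathrm{L}_{mot}G$.

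To conclude I would run a ``fibrewise equivalence'' argument of Puppe--Mather type, available since colimits are universal in $\ho{k}$ \cite[Proposition 3.15]{HoyoisEquiv} (the homotopy distributivity recalled in Section~\ref{ss:connectivityoffibersandcofibers}). We now have two fibrations with common total space $\mathrm{L}_{mot}LG$: the projection $p \colon \mathrm{L}_{mot}LG \to \mathrm{L}_{mot}\op{Gr}_G$ with fibre $\mathrm{L}_{mot}L^+G$, and $\mathrm{ev}_1 \colon \mathrm{L}_{mot}LG \to \mathrm{L}_{mot}G$ with fibre $\mathrm{L}_{mot}\Omega^{1,1}G$; moreover the composite $\mathrm{L}_{mot}L^+G \hookrightarrow \mathrm{L}_{mot}LG \xrightarrow{\mathrm{ev}_1} \mathrm{L}_{mot}G$ is an equivalence, i.e. $\mathrm{ev}_1$ restricts to an equivalence on a fibre of $p$. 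Since everything is equivariant for the action of the group $\mathrm{L}_{mot}LG$, which acts transitively on $\mathrm{L}_{mot}\op{Gr}_G$, the map $(p, \mathrm{ev}_1)$ is an equivalence on every fibre of $p$ and hence identifies $\mathrm{L}_{mot}LG$ with $\mathrm{L}_{mot}\op{Gr}_G \times \mathrm{L}_{mot}G$ over $\mathrm{L}_{mot}\op{Gr}_G$; under this identification $\mathrm{ev}_1$ becomes the projection to $\mathrm{L}_{mot}G$. Taking fibres over the base point of $\mathrm{L}_{mot}G$ then gives $\mathrm{L}_{mot}\Omega^{1,1}G = \fib(\mathrm{ev}_1) \weq \mathrm{L}_{mot}\op{Gr}_G$, which is the asserted equivalence; tracking the loop-group multiplications along the way shows it is compatible with the natural $h$-space structures.

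The step I expect to be the main obstacle is the polynomial presentation in the first paragraph: one must know that the naive quotient $LG/L^+G$ really is $\op{Gr}_G$ and that the quotient map is a torsor in a topology coarse enough (Zariski, hence Nisnevich) for $\mathrm{L}_{mot}$ to interact with it as above --- this is where all the algebraic geometry (Drinfeld--Simpson, Beauville--Laszlo, the Schubert stratification) does the real work. After that the argument is formal, modulo the care needed to run the fibrewise-equivalence step inside $\ho{k}$ via homotopy distributivity rather than in an honest $\infty$-topos. A secondary subtlety worth pinning down is that ``$\Omega^{1,1}G$'' in the statement should be read as $\mathrm{L}_{mot}$ applied to the presheaf $\iMap_*(\Gm, G)$; the argument above also shows this coincides with the internal loop space $\iMap_*(\Gm, \mathrm{L}_{mot}G)$ computed in $\ho{k}$, i.e. that forming $\Gm$-loops commutes with motivic localization for split reductive $G$.
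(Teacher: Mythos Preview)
The paper does not supply a proof of this theorem: it is quoted verbatim from \cite[Theorem 15]{Bachmanngrassmannian} and used as a black box in the applications section. So there is nothing in the present paper to compare your argument against; any assessment has to be on the internal merits of your sketch and on how it lines up with the cited source.

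Your outline is broadly the right shape and is close in spirit to the argument in \cite{Bachmanngrassmannian}: realize $\op{Gr}_G$ as the quotient of polynomial loops by positive loops, contract $L^+G$ onto $G$ along $\aone$, and identify the remaining quotient $LG/G$ with $\Omega_{\Gm}G$ via the split evaluation sequence. The geometric input you isolate (polynomial presentation, Zariski-local triviality of $LG \to \op{Gr}_G$) is indeed where the work lies.

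There is, however, one point where your write-up is looser than it should be. You assert that ``since the quotient is free and $\mathrm{L}_{mot}$ preserves colimits and finite products, applying $\mathrm{L}_{mot}$ gives a fibre sequence $\mathrm{L}_{mot}L^+G \to \mathrm{L}_{mot}LG \to \mathrm{L}_{mot}\op{Gr}_G$''. Preservation of colimits and finite products tells you that $\mathrm{L}_{mot}$ carries the bar construction for $LG/L^+G$ to the bar construction for $\mathrm{L}_{mot}(LG)/\mathrm{L}_{mot}(L^+G)$, i.e.\ identifies the \emph{quotients}; it does \emph{not} by itself guarantee that the resulting sequence is a fibre sequence in $\ho{k}$, because $\ho{k}$ is not an $\infty$-topos and groupoid objects need not be effective there. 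The same remark applies to your second fibre sequence. In fact you do not need these fibre sequences at all: the bar-construction identification already gives
\[
\mathrm{L}_{mot}\op{Gr}_G \;\simeq\; \mathrm{L}_{mot}(LG)/\mathrm{L}_{mot}(L^+G) \;\simeq\; \mathrm{L}_{mot}(LG)/\mathrm{L}_{mot}(G) \;\simeq\; \mathrm{L}_{mot}(LG/G) \;\simeq\; \mathrm{L}_{mot}\,\Omega_{\Gm}G,
\]
where the middle equivalence uses that the constant-loops inclusion $G \hookrightarrow L^+G$ is an $\aone$-equivalence of group objects acting compatibly on $LG$, and the last uses that $LG/G \simeq \Omega_{\Gm}G$ already in the Nisnevich topos (the evaluation sequence is split). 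Rewriting your argument this way removes the need for the Puppe--Mather step and for any delicate claims about $\mathrm{L}_{mot}$ preserving fibre sequences.
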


As mentioned above, the affine Grassmannian of $\op{Gr}_G$ has the structure of an ind-scheme: the Schubert cell decomposition explicitly allows us to see $\op{Gr}_G$ as a filtered colimit of projective $k$-schemes that are frequently singular.   The Schubert cell description in the case of $SL_2$ is particularly simple and was elucidated and made extremely explicit in the work of Mitchell \cite{MitchellSUn}, who also analyzed the interaction of this filtration (which he called the Bott filtration) with the multiplicative structure induced by the weak equivalence $\op{Gr}_G(\cplx) \cong \Omega K$ mentioned above.

\begin{theorem}[Mitchell]
	\label{thm:mitchellfiltration}
	The ind-scheme $\op{Gr}_{SL_2}$ admits a Schubert filtration, i.e., there exists a directed sequence of projective schemes $F_{2,r}$ such that $\op{Gr}_{SL_2} = \colim_r F_{2,r}$ and such that the schemes $F_{2,r}$ have the following properties.
	\begin{enumerate}[noitemsep,topsep=1pt]
		\item The scheme $F_{2,1}$ is isomorphic to $\pone$.
		\item The complement $F_{2,r} \setminus F_{2,r-1}$ is isomorphic to an affine space of dimension $r$ and $F_{2,r}/F_{2,r-1}$ is equivalent to $S^{2r,r}$.
		\item The multiplication for the $h$-space structure induces a morphism $F_{2,1}^{\times r} \to F_{2,r}$ that is a resolution of singularities.
		\item The inclusion $F_{2,1}^{\times r-1} \hookrightarrow F_{2,1}^{\times r} \to F_{2,r} \to S^{2r,r}$ (as the first $r-1$-factors) is the constant map to the base-point.
	\end{enumerate}
\end{theorem}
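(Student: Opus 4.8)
The filtration is Mitchell's ``Bott filtration''. Under the equivalence $\op{Gr}_{SL_2}\weq\Omega^{1,1}SL_2$ of Theorem~\ref{thm:affinegrassmannianmodel} --- which over $\cplx$ recovers the homeomorphism $\op{Gr}_{SL_2}(\cplx)\weq\Omega SU(2)$ of Quillen \cite{MitchellQuillen} and Garland--Raghunathan \cite{GarlandRaghunathan} --- $F_{2,r}$ is the closure in $\op{Gr}_{SL_2}$ of the unique $r$-dimensional Iwahori orbit, equivalently the algebro-geometric avatar of the James/Bott stage $J_r(\pone)$. The plan is to separate the statement into a geometric part and a homotopy-theoretic part. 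Assertions (1), the first half of (2), (4), the exhaustion $\op{Gr}_{SL_2}=\colim_r F_{2,r}$, and the well-definedness of the map in (3) are purely geometric facts about $\op{Gr}_{SL_2}$; since the affine Grassmannian, its Schubert varieties, and the convolution ($h$-space) product are all defined over $\Z$, Mitchell's characteristic-zero analysis \cite{MitchellSUn} spreads out to an arbitrary field, so I would simply cite it. Concretely: $F_{2,1}$ is the $\pone$ cut out by the $1$-dimensional Schubert cell; the open stratum $F_{2,r}\setminus F_{2,r-1}$ is the top affine Bruhat cell, hence an affine space $\A^r$; and the $h$-space multiplication $\mu$ of Theorem~\ref{thm:affinegrassmannianmodel} carries $F_{2,1}^{\times r}$ into $F_{2,r}$ (this is precisely what singles out the Bott filtration) and restricts on the top strata to a morphism $\A^r\to\A^r$ which, by an explicit computation in coordinates valid over $\Z$ following \cite{MitchellSUn}, is an isomorphism.

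Granting this, (3) is immediate: $F_{2,1}^{\times r}\cong(\pone)^{\times r}$ is smooth and projective and $\mu$ is proper and, by the previous sentence, birational, so it is a resolution of singularities. For the geometric content of (4): the closed locus $T:=\bigcup_{i=1}^r\{x_i=e\}\subset(\pone)^{\times r}$ on which some coordinate equals the basepoint $e\in F_{2,1}$ (the $h$-space unit) is carried by $\mu$ into $F_{2,r-1}$, because multiplying by the unit lowers the filtration degree by one. Since $\mu$ is an isomorphism over the open stratum, $\mu^{-1}(F_{2,r-1})_{\mathrm{red}}$ equals $T$ exactly; in particular $(\pone)^{\times r}/T\cong\pone^{\wedge r}=S^{2r,r}$, and the composite $(\pone)^{\times r}\xrightarrow{\mu}F_{2,r}\to F_{2,r}/F_{2,r-1}$ factors through a map $\overline\mu\colon S^{2r,r}\to F_{2,r}/F_{2,r-1}$; statement (4) as stated then follows once (2) is proved.

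To finish (2) I must show $\overline\mu$ is an equivalence. The square
\[
\xymatrix{
T\ar[r]\ar[d] & (\pone)^{\times r}\ar[d]^{\mu}\\
F_{2,r-1}\ar[r] & F_{2,r}
}
\]
is an abstract blow-up square, hence $cdh$-distinguished in $\Sch^{ft}_k$, so by Voevodsky's theorem \cite{VNisvscdh} --- in the form of Proposition~\ref{prop:s1stablehomotopytypesofsingularschemes} --- the functor $\Sigma^{\infty}_{S^1}$ sends it to a cocartesian square; comparing cofibers shows that $\Sigma^{\infty}_{S^1}\overline\mu$ is an equivalence. I would then upgrade this to an unstable equivalence via the Whitehead theorem~\ref{thm:whitehead}: for $r=1$ the assertion is the tautology $\pone/\pt=\pone$, while for $r\geq2$ the space $S^{2r,r}$ is a double simplicial suspension, hence simply connected, and $F_{2,r}/F_{2,r-1}$ is likewise simply connected (it is assembled from $\pone$ by attaching the simply connected cells $S^{2i,i}$, $2\leq i\leq r$, so $\bpi_1(\pone)\twoheadrightarrow\bpi_1(F_{2,r})$, and van Kampen applied to the cofiber sequence $F_{2,r-1}\to F_{2,r}\to F_{2,r}/F_{2,r-1}$ then kills $\bpi_1$); both are therefore nilpotent, and Theorem~\ref{thm:whitehead} forces $\overline\mu$ to be an equivalence.

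The main obstacle is this last step. Because the $F_{2,r}$ are genuinely singular for $r\geq2$, the quotient $F_{2,r}/F_{2,r-1}$ is inaccessible by naive homotopy purity, and one is pushed into the stable category through $cdh$-descent --- which confines the argument to characteristic zero; a characteristic-free substitute would be a Bia\l{}ynicki--Birula decomposition of $F_{2,r}$ for the loop-rotation $\gm{}$-action, which, provided $F_{2,r}$ is smooth at the attracting fixed point of the top cell, exhibits $F_{2,r}/F_{2,r-1}$ directly as the Thom space $S^{2r,r}$ of that cell, avoiding both $cdh$-descent and the Whitehead theorem. The secondary difficulty is that invoking Theorem~\ref{thm:whitehead} requires knowing the singular quotient $F_{2,r}/F_{2,r-1}$ is nilpotent; for $r\geq2$ this reduces to simple connectivity, which I expect to be routine via the cellular build-up above but which must be checked with some care in the singular setting.
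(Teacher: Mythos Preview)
The paper's proof is essentially a citation: the theorem is attributed to Mitchell, and the proof consists of the assertion that Mitchell's arguments in \cite[2.7--8, 2.11--12]{MitchellSUn} (stated over $\cplx$) go through over any characteristic $0$ field, together with a pointer to \cite[Proposition 2.6]{BeauvilleLaszlo} for the scheme-theoretic construction of the $F_{2,r}$. No independent argument is given.

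Your approach is quite different and far more ambitious: you try to \emph{prove} part (2) using the paper's own machinery, first obtaining a stable equivalence $S^{2r,r} \to F_{2,r}/F_{2,r-1}$ via the abstract blow-up square and $cdh$-descent (Proposition~\ref{prop:s1stablehomotopytypesofsingularschemes}), and then upgrading it unstably via the Whitehead theorem~\ref{thm:whitehead}. This is a coherent plan, and in fact it anticipates almost exactly the argument the paper runs later in the proof of Theorem~\ref{thm:exoticequivalence}; you are effectively front-loading that argument into the present theorem.

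There is, however, a genuine gap in your nilpotence step, and it is precisely the circularity you half-flag at the end. To apply Theorem~\ref{thm:whitehead} you need $F_{2,r}/F_{2,r-1}$ to be simply connected. Your argument is that $F_{2,r}$ is built from $\pone$ by attaching simply connected cells $S^{2i,i}$ for $2 \le i \le r$, hence $\bpi_1(\pone) \twoheadrightarrow \bpi_1(F_{2,r})$, and then van Kampen finishes. But the case $i=r$ in that cell attachment \emph{is} the equivalence you are trying to prove. Even set up inductively (assuming (2) for $i<r$), you only obtain $\bpi_1(\pone) \twoheadrightarrow \bpi_1(F_{2,r-1})$; to pass to $\bpi_1(F_{2,r})$ you would need $\bpi_1(F_{2,r-1}) \twoheadrightarrow \bpi_1(F_{2,r})$, which again follows from the simple connectivity of the cofiber you have not yet established. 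The resolution $(\pone)^{\times r} \to F_{2,r}$ does not obviously help either, since proper birational maps of singular schemes need not induce epimorphisms on motivic $\bpi_1$. Your Bia\l{}ynicki--Birula suggestion---exploiting that the singularities of $F_{2,r}$ lie in $F_{2,r-1}$ so that the top cell is attached along a smooth point---is the right way out, and is much closer to how Mitchell actually argues.
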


\begin{proof}
	Mitchell states these results only in the case $k = \cplx$, but the proofs work over any characteristic $0$ field.  For a construction of the relevant varieties as schemes see \cite[Proposition 2.6]{BeauvilleLaszlo}.  With those observations in mind, these results are a summary of \cite[2.7-8, 2.11-12]{MitchellSUn} in the case $n = 2$.
\end{proof}

\begin{rem}
	The map in Theorem~\ref{thm:mitchellfiltration}(3) is an affine Bott--Samelson--Demazure resolution (see \cite[p. 920]{BottSamelson} and \cite[\S 3]{Demazure} for a scheme-theoretic treatment).  The maximal compact subgroup of $SL_2(\cplx)$ is $SU(2) \cong S^3$.  Under the identification $\Omega S^3 = \Omega \Sigma S^2 \cong J(S^2)$, the space $\Omega\Sigma S^2$ carries an increasing James filtration.  Under the weak equivalence $\Omega SU(2) \cong \op{Gr}_{SL_2}(\cplx)$, the James filtration is carried to the Schubert filtration.  Moreover, the inclusion $F_{2,1} \hookrightarrow \op{Gr}_{SL_2}$ realizes Bott's generating complex \cite[\S 5]{Bottloops}.
\end{rem}

\subsubsection*{An unstable exotic periodicity}
After Theorem~\ref{thm:affinegrassmannianmodel} and Theorem~\ref{thm:mitchellfiltration}(1) there is a morphism of (ind-)schemes
\[
\pone \longrightarrow \op{Gr}_{SL_2},
\]
corresponding to the inclusion $F_{2,1} \hookrightarrow \op{Gr}_{SL_2}$.  The right hand side has the structure has the structure of an $h$-group by means of Theorem~\ref{thm:affinegrassmannianmodel}, in particular, it is an associative monoid. 

Given any connected space $\mathscr{X}$, the James construction $J(\mathscr{X})$ \cite[Theorem 2.4.2]{AWW} comes equipped with a morphism $\mathscr{X} \to J(\mathscr{X})$ that is universal among maps from $\mathscr{X}$ to associative monoids:  if $\mathscr{M}$ is an associative monoid in $\ho{k}$, then given a map $f: \mathscr{X} \to \mathscr{M}$, $f$ factors uniquely through the map $\mathscr{X} \to J(\mathscr{X})$.  The universal property of the James construction applied to the map $\pone \to \op{Gr}_{SL_2}$ of the previous point factors through a morphism:
\[
\psi: J(\pone) \longrightarrow \op{Gr}_{SL_2}.
\]
Moreover, the left hand side has an increasing filtration by spaces of the form $J_n(\pone)$ with successive subquotients equivalent to $S^{2n,n}$; we refer to this filtration as the James filtration.  After a single simplicial suspension, the left hand side splits as $\bigvee_{n \geq 0} S^{2n+1,n}$ by \cite[Proposition 5.6]{WickelgrenWilliams}.  The next result, which implies Theorem~\ref{thmintro:exoticequivalence} essentially follows from assertion of  Theorem~\ref{thm:mitchellfiltration} that the James filtration coincides with the Schubert filtration; in the motivic context, we view the ``weight shifting" displayed by $\psi$ as an exotic periodicity.

\begin{theorem}
	\label{thm:exoticequivalence}
	Assume $k$ is a field having characteristic $0$.
	\begin{enumerate}[noitemsep,topsep=1pt]
		\item The morphism $\Sigma^{\infty}_{S^1}\psi$ is an equivalence.
		\item The morphism $\psi:  \Omega^{1,0}\Sigma^{1,0}S^{2,1} \to \Omega^{1,1}\Sigma^{1,1}S^{2,1}$ is an equivalence.
		\item The morphism $S^{3,1} \to \Omega^{1,1}\op{HP}^{\infty}$ of the introduction is a weak equivalence.
	\end{enumerate}
\end{theorem}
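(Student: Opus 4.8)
The plan is to prove the three parts in order, each one feeding the next. Throughout I use the equivalence $\op{Gr}_{SL_2}\iso\Omega^{1,1}SL_2$ of Theorem~\ref{thm:affinegrassmannianmodel} (compatibly with $h$-space structures), the fact that $SL_2$ is $\aone$-connected so that $SL_2\iso\Omega_{S^1}BSL_2\iso\Omega_{S^1}\op{HP}^\infty$, and the identification of $J(\pone)$ with the free associative monoid $\mathrm{Free}_{\mathbb{E}_1}(\pone)$ in $\ho{k}$ (\cite[Theorem 2.4.2]{AWW}), which exhibits $\psi$ as the unique $\mathbb{E}_1$-morphism extending the bottom Schubert cell $\pone=F_{2,1}\hookrightarrow\op{Gr}_{SL_2}$.

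For part (1) the point is that $\psi$ preserves filtrations (James versus Schubert). Since $\psi$ is an $\mathbb{E}_1$-map restricting on $\pone$ to the bottom Schubert cell, the square comparing the iterated multiplications $(\pone)^{\times r}\to J_r(\pone)$ and $F_{2,1}^{\times r}\to F_{2,r}$ commutes; combining this with Theorem~\ref{thm:mitchellfiltration}(3)--(4) (and $cdh$-excision for the closed pair $F_{2,r-1}\hookrightarrow F_{2,r}$ with open complement $\aone^r$, which shows the multiplication induces the equivalence $F_{2,1}^{\wedge r}=S^{2r,r}\weq F_{2,r}/F_{2,r-1}$) together with the standard identification $J_r(\pone)/J_{r-1}(\pone)\iso(\pone)^{\wedge r}$, one sees $\psi$ is an equivalence on associated graded. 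I would then invoke Proposition~\ref{prop:s1stablehomotopytypesofsingularschemes} to make sense of $\Sigma^\infty_{S^1}$ on the (singular) schemes $F_{2,r}$ and of the cofiber sequences $\Sigma^\infty_{S^1}F_{2,r-1}\to\Sigma^\infty_{S^1}F_{2,r}\to\Sigma^\infty_{S^1}S^{2r,r}$, induct on $r$ via the five lemma to conclude $\Sigma^\infty_{S^1}J_r(\pone)\to\Sigma^\infty_{S^1}F_{2,r}$ is an equivalence, and pass to the colimit (Proposition~\ref{prop:s1stablehomotopytypesofsingularschemes}(2), together with $J(\pone)=\colim_r J_r(\pone)$).

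For part (2) I would apply Theorem~\ref{thm:whitehead}. Its target $\op{Gr}_{SL_2}$ is a nilpotent motivic space by \cite[Theorem 15]{Bachmanngrassmannian} and \cite[Theorem 3.3.17, Example 3.4.1]{AFHLocalization}. For the source, $J(\pone)$ is connected, hence grouplike, hence equivalent as an $\mathbb{E}_1$-monoid to $\Omega_{S^1}BJ(\pone)\iso\Omega_{S^1}\Sigma_{S^1}\pone$; being a simplicial loop space it too is nilpotent by \emph{loc.\ cit.} Since $\Sigma^\infty_{S^1}\psi$ is an equivalence by part (1), Theorem~\ref{thm:whitehead} (with $n$ arbitrarily large) forces $\fib(\psi)$ to be $m$-connected for every $m$, so $\psi$ is an equivalence.

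For part (3), write $\iota\colon S^{3,1}\to\Omega^{1,1}\op{HP}^\infty$ for the map of the introduction. Applying $\Omega_{S^1}$ and using $S^{3,1}=\Sigma_{S^1}\pone$, $\op{HP}^\infty\iso BSL_2$, and that $\Omega_{S^1}$ commutes with $\Omega^{1,1}$, one gets an $\mathbb{E}_1$-morphism $\Omega_{S^1}\iota\colon\Omega_{S^1}S^{3,1}\iso J(\pone)\longrightarrow\Omega^{1,1}(\Omega_{S^1}\op{HP}^\infty)\iso\Omega^{1,1}SL_2\iso\op{Gr}_{SL_2}$, which by the universal property of $J(\pone)=\mathrm{Free}_{\mathbb{E}_1}(\pone)$ is determined by its restriction to $\pone$. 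Chasing the adjunctions, that restriction $\pone\to\op{Gr}_{SL_2}=\Omega^{1,1}SL_2$ has $\Sigma^{1,1}$-adjoint $S^{3,2}\to SL_2$ equal to the clutching function of the Hopf bundle classified by $\op{HP}^1\hookrightarrow\op{HP}^\infty$; by the construction of the Hopf bundle in \cite{PaninWalter} (and the identification of $F_{2,1}\hookrightarrow\op{Gr}_{SL_2}$ with Bott's generating complex, cf.\ the remark after Theorem~\ref{thm:mitchellfiltration}) this coincides, up to a sign immaterial for what follows, with the $\Sigma^{1,1}$-adjoint of $\psi|_\pone$. Hence $\Omega_{S^1}\iota$ is identified with $\psi$, which is an equivalence by part (2). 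Finally $\Omega_{S^1}\fib(\iota)=\fib(\Omega_{S^1}\iota)=\pt$, so $\fib(\iota)$ is $0$-truncated, while the exact sequence of homotopy sheaves gives $\bpi_0(\fib(\iota))\iso\bpi_1(\Omega^{1,1}\op{HP}^\infty)\iso\bpi_0(\op{Gr}_{SL_2})=\pt$; thus $\fib(\iota)=\pt$ and $\iota$ is an equivalence. The step I expect to be hardest is precisely this last identification: that looping $\op{HP}^1\hookrightarrow\op{HP}^\infty$ and transporting along the equivalences $\op{HP}^\infty\iso BSL_2$, $\op{Gr}_{SL_2}\iso\Omega^{1,1}SL_2$, $SL_2\iso\aone^2\setminus 0\iso S^{3,2}$, $\op{HP}^1\iso S^{4,2}$ recovers the bottom Schubert cell (equivalently Bott's generating complex) — geometrically transparent, but it requires a careful comparison of the Panin--Walter model of $\op{HP}^\bullet$ with Bachmann's equivalence and Mitchell's Schubert filtration; a second, more routine difficulty is the interaction of $cdh$-excision with Mitchell's filtration in part (1).
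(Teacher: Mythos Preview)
Your proposal is correct and follows essentially the same approach as the paper: in (1) you compare the James and Schubert filtrations layer by layer via Theorem~\ref{thm:mitchellfiltration} and Proposition~\ref{prop:s1stablehomotopytypesofsingularschemes}, and in (2) you invoke Theorem~\ref{thm:whitehead} using that both spaces are connected $h$-spaces, exactly as the paper does. The only difference is in (3): the paper applies $B$ to the equivalence $\psi$ and then identifies $B\psi$ with the map $\iota$ of the introduction (up to a unit in $\GW(k)$), whereas you go the other way, applying $\Omega_{S^1}$ to $\iota$, identifying $\Omega_{S^1}\iota$ with $\psi$, and then deducing that $\iota$ itself is an equivalence from connectedness of source and target; these are dual arguments and both work, with the paper's version slightly cleaner since delooping an equivalence of grouplike $\mathbb{E}_1$-spaces is automatically an equivalence and no separate $\bpi_0$ check is needed. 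You have also correctly identified the genuinely delicate point, namely matching the looped bottom-cell inclusion $\op{HP}^1 \hookrightarrow \op{HP}^\infty$ with the Schubert inclusion $F_{2,1} \hookrightarrow \op{Gr}_{SL_2}$ up to a unit, which the paper handles by observing that both represent $\GW(k)$-module generators of $\bpi_{4,2}(BSL_2) \cong \GW(k)$.
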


\begin{proof}
	Consider the Schubert filtration on $\op{Gr}_{SL_2}$ from Theorem~\ref{thm:mitchellfiltration}.  By definition of the James filtration on $J(\pone)$, the layers $J_r(\pone)$ are precisely the images of ${\pone}^{\times r}$ under the monoid structure.  Thus, Theorem~\ref{thm:mitchellfiltration}(3) implies that the product map factors through a map
	\[
	J_r(\pone) \longrightarrow F_{2,r}.
	\]
	By definition, the morphism in question is an equivalence when $r = 1$.  Theorem~\ref{thm:mitchellfiltration}(4) implies that the induced maps $J_r(\pone)/J_{r-1}(\pone) \to F_{2,r}/F_{2,r-1}$ are equivalences.  An induction argument using Proposition~\ref{prop:s1stablehomotopytypesofsingularschemes}(1) implies that the maps $J_r(\pone) \to F_{2,r}$ are $S^1$-stable equivalences.  In that case, Proposition~\ref{prop:s1stablehomotopytypesofsingularschemes}(2) again in conjunction with Theorem~\ref{thm:mitchellfiltration} implies that the map $J(\pone) \to \op{Gr}_{SL_2}$ is an $S^1$-stable equivalence.  The second point follows from the first by appeal to Theorem~\ref{thm:whitehead} since both spaces are connected $h$-spaces (in particular, nilpotent).
	
	The map $\psi$ is an $h$-map by construction and therefore $B(\psi)$ is an equivalence.  In that case, we have
	\[
	S^{3,1} \cong B\Omega S^{3,1} \cong B(J(\pone)) \cong B(\op{Gr}_{SL_2}) \cong B(\Omega^{1,1}SL_2) \cong \Omega^{1,1}BSL_2.	
	\]
	The inclusion of the bottom cell $\op{HP}^1 \to \op{HP}^{\infty}$ is classified by the Hopf bundle $\nu$, which is a $GW(k)$-module generator of $\bpi_{4,2}(BSL_2) \cong GW(k)$.  Up to multiplication by a unit, the adjoint of this map coincides with the map inducing the equivalence in the previous point.  Replacing the equivalence of the previous point by its product with inverse of the unit in $GW(k)$ just described if necessary, the result from the introduction follows.
\end{proof}

\subsubsection*{An exceptional EHP sequence}
The combinatorial James--Hopf invariant (in this context, \cite[Definition 3.1.3]{AWW}) defines a morphism:
\[
\Omega^{1,0} \Sigma^{1,0} S^{2,1} \longrightarrow \Omega^{1,0} \Sigma^{1,0} S^{4,2}.
\]
We can identify the $2$-local fiber of this map in a fashion that extends the EHP fiber sequence of Wickelgren--Williams \cite{WickelgrenWilliams}.  The following result uses the full strength of Theorem~\ref{thm:whitehead}.

\begin{proposition}
	\label{prop:exceptionalEHP}
	Over any field $k$, there is $2$-local homotopy fiber sequence of motivic spaces of the form
	\[
	S^{2,1} \longrightarrow \Omega^{1,0} S^{3,1} \longrightarrow \Omega^{1,0} S^{5,2}.
	\]
\end{proposition}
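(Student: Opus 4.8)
The plan is to identify the displayed map as the second James--Hopf invariant, let $F$ be its fiber, produce the ``inclusion of the bottom cell'' $\phi\colon S^{2,1}\to F$, and prove $\phi$ is a $2$-local equivalence by checking it on $\aone$-homology and invoking the Whitehead theorem. Write $X = S^{2,1} = \pone$, so that $\Sigma^{1,0}X = S^{3,1}$ and $\Sigma^{1,0}(X\wedge X) = S^{5,2}$, let $H\colon \Omega^{1,0}\Sigma^{1,0}X \to \Omega^{1,0}\Sigma^{1,0}(X\wedge X)$ be the combinatorial James--Hopf map of \cite[Definition 3.1.3]{AWW}, and set $F := \fib(H)$. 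All spaces and maps in sight are pulled back from the prime field, which is perfect, and base change preserves fiber sequences, so it suffices to prove the statement over that field; in particular Theorem~\ref{thm:whitehead} and the analogous $\Z_{(2)}$-local statement (localize $\ho{k}$ at $\H^{\aone}_*(-;\Z_{(2)})$, as in the remark following Theorem~\ref{thm:whitehead}) are at our disposal. By the motivic James theorem \cite[Theorem 2.4.2]{AWW}, $\Omega^{1,0}\Sigma^{1,0}X \weq J(X)$ compatibly with the James filtration $J_1(X) = X \subset J_2(X)\subset\cdots$, and by construction of $H$ the composite of the suspension map $E\colon X\to\Omega^{1,0}\Sigma^{1,0}X$ with $H$ is canonically null; a choice of nullhomotopy yields $\phi\colon X\to F$. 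The proposition is precisely the assertion that $\phi$ is a $2$-local equivalence.

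First I would check that $X$ and $F$ are nilpotent, so that the $2$-local Whitehead theorem applies. The space $X = \pone$ is nilpotent, being a flag variety for $\SL_2$. Both $\Omega^{1,0}\Sigma^{1,0}X$ and $\Omega^{1,0}\Sigma^{1,0}(X\wedge X)$ are connected $S^1$-loop spaces, hence nilpotent, and $\Omega^{1,0}\Sigma^{1,0}(X\wedge X) = \Omega S^{5,2}$ is $1$-connected. From the long exact homotopy sequence of $F\to\Omega^{1,0}\Sigma^{1,0}X\xrightarrow{H}\Omega^{1,0}\Sigma^{1,0}(X\wedge X)$ one reads off that $F$ is connected, that $\ker(\bpi_1(F)\to\bpi_1(\Omega^{1,0}\Sigma^{1,0}X))$ is a central abelian subsheaf (it is a quotient of $\bpi_2$ of the $1$-connected base, on which $\bpi_1$ acts trivially), and that $\bpi_1(F)$ acts on each $\bpi_i(F)$ through the abelian sheaf $\bpi_1(\Omega^{1,0}\Sigma^{1,0}X) = \H^{\aone}_1(\pone) = \KMW_1$; one checks this forces $F$ to be nilpotent — or, if convenient, only locally nilpotent, which already suffices by the Corollary to Proposition~\ref{prop:generalized-postnikov}.

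Granting nilpotence, by the $2$-local Whitehead theorem it suffices to show $\phi$ induces an isomorphism on $\H^{\aone}_*(-;\Z_{(2)})$, equivalently that $\Sigma^{\infty}_{S^1}\phi$ is a $2$-local equivalence. Here I would apply the motivic James splitting $\Sigma^{1,0}\Omega^{1,0}\Sigma^{1,0}Y \weq \bigvee_{n\ge 1}\Sigma^{1,0}(Y^{\wedge n})$ of \cite[Proposition 5.6]{WickelgrenWilliams} to $Y = X$ and $Y = X\wedge X$, together with the motivic Bott--Samelson theorem, to identify $\H^{\aone}_*(\Omega^{1,0}\Sigma^{1,0}X)$ and $\H^{\aone}_*(\Omega^{1,0}\Sigma^{1,0}(X\wedge X))$ as the tensor algebras on $\widetilde{\H}{}^{\aone}_*(X)$, respectively on $\widetilde{\H}{}^{\aone}_*(X)^{\otimes 2} = \widetilde{\H}{}^{\aone}_*(X\wedge X)$; and the Boardman--Steer formula to identify $\H^{\aone}_*(H;\Z_{(2)})$ as the coalgebra map recording the decompositions of a tensor word into consecutive pairs. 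This map is surjective with $2$-local kernel a free module over its image generated by $1$ and the fundamental class of $X$, and converting this into the homology of the fiber yields $\H^{\aone}_*(F;\Z_{(2)})\cong\H^{\aone}_*(X;\Z_{(2)})$, realized by $\phi$, which finishes the proof.

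The main obstacle is this last step — extracting $\H^{\aone}_*(F;\Z_{(2)})$ from the homology of the total space, the base, and the James--Hopf map — since, as noted in the introduction, the available motivic Serre spectral sequences are too weak. The route I would take is to avoid it entirely, running instead the Blakers--Massey estimate of Proposition~\ref{prop:blakersmassey} together with the relative Hurewicz theorem~\ref{thm:relativeHurewicz} inductively along the James filtration, precisely as in \cite{WickelgrenWilliams}, but now with no restriction on the range, since the full Whitehead theorem~\ref{thm:whitehead} is available; alternatively one may invoke a motivic Eilenberg--Moore comparison. A secondary point requiring care is verifying the (local) nilpotence of $F$ used in the second step.
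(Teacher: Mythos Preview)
Your overall strategy coincides with the paper's: construct $\phi\colon S^{2,1}\to F$, verify nilpotence of source and target, apply the $2$-local Whitehead theorem to reduce to the $S^1$-stable statement, and then defer to the argument of \cite[Theorem 8.3 and Proposition 4.7]{WickelgrenWilliams} for that stable computation. Your detour through the tensor-algebra description of $\H^{\aone}_*(\Omega\Sigma X)$ is unnecessary once you recognize (as you eventually do) that the stable step is exactly what Wickelgren--Williams already proved.

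The one place your argument is weaker than the paper's is the nilpotence of $F$. Your claim that ``$\bpi_1(F)$ acts on each $\bpi_i(F)$ through the abelian sheaf $\bpi_1(\Omega^{1,0}\Sigma^{1,0}X)$'' is not correct as stated: what is true is that the $\bpi_1(F)$-action is the restriction of the $\bpi_1(\Omega\Sigma X)$-action on $\bpi_i(F)$, but this latter action is not a priori trivial just because $\Omega\Sigma X$ is simple (simplicity controls the action on $\bpi_i(\Omega\Sigma X)$, not on $\bpi_i(F)$). The paper sidesteps this entirely by observing that $H$ is a map between \emph{simple} connected spaces (loop spaces are simple), and citing \cite[Theorem 3.3.6]{AFHLocalization} for the general fact that the fiber of such a map is nilpotent. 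This is both cleaner and actually complete; your hedge toward ``locally nilpotent'' and the Corollary to Proposition~\ref{prop:generalized-postnikov} is not needed. A minor stylistic difference: you obtain the nullhomotopy of $H\circ E$ from the construction of the James--Hopf map, whereas the paper simply notes the target $\Omega S^{5,2}$ is $1$-connected; either works.
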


\begin{proof}
	The James--Hopf invariant described above is a map of pointed connected and simple spaces so its fiber is nilpotent by appeal to \cite[Theorem 3.3.6]{AFHLocalization}.  The composite map $S^{2,1} \to \Omega^{1,0}S^{5,2}$ is null because the target is simply connected.  A choice of null-homotopy then determines a map $f: \pone \to \fib(\Omega^{1,0}S^{3,1} \to \Omega^{1,0} S^{5,2})$.  The space $S^{2,1}$ is a nilpotent motivic space by \cite[Theorem 3.4.8]{AFHLocalization} (take $G = SL_2$ in that statement).  Now, let us localize all spaces and maps at $2$; via \cite[Theorem 4.3.9]{AFHLocalization} there is a well-behaved $2$-localization for nilpotent motivic spaces.  
	
	To check that $f$ is an equivalence after $2$-localization, by the Whitehead theorem for nilpotent motivic spaces \ref{thm:whitehead}, it suffices to check this is the case after $S^1$-suspension.  In that case, the proof of \cite[Theorem 8.3]{WickelgrenWilliams}, in conjunction with \cite[Proposition 4.7]{WickelgrenWilliams} shows that $\Sigma^{\infty} f$ is a $2$-local equivalence, as claimed.
\end{proof}

\begin{rem}
	The connected motivic spheres $S^{q+1,q}$ are only known to be $h$-spaces when $q = 0$ and $q = 2$, so the above argument cannot be generalized much further.  
\end{rem}

Consider the map $S^{4,2} \to K(\Z(2),4)$ classifying a generator.  This map is adjoint to maps $S^{3,1} \to K(\Z(1),3)$, $S^{3,2} \to K(\Z(2),3)$ and $S^{2,1} \to K(\Z(1),2)$ and moreover these adjoint maps are also the relevant fundamental class maps.  We define 
\[
S^{3,i} \langle 3 \rangle := \fib(S^{3,i} \to K(\Z(i),3)).  
\]
We obtain analogs of Cohen's exceptional $2$-local fiber sequence \cite[Theorem 3.3]{Cohen}. 

\begin{cor}
	\label{cor:cohenexceptional}
	Assume $k$ is a field.  There is a $2$-local fiber sequence of the form
	\[
	S^{3,2} \longrightarrow \Omega^{1,0} S^{3,1} \langle 3 \rangle \longrightarrow \Omega^{1,0} S^{5,2}
	\]
	and, if $k$ has characteristic $0$, then there is a fiber sequence of the form
	\[
	S^{3,2} \longrightarrow \Omega^{1,1} S^{3,2} \langle 3 \rangle \longrightarrow \Omega^{1,0} S^{5,2}.
	\]
	In both cases, the left hand map is induced by a suitable generator of $\bpi_{3,2}(\Omega^{1,i}S^{3,i} \langle i \rangle)$.
\end{cor}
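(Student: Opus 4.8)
The plan is to deduce both fibre sequences from the $2$-local EHP fibre sequence of Proposition~\ref{prop:exceptionalEHP} by splitting off the bottom cell. I work $2$-locally throughout: every space below---$\Omega^{1,0}S^{3,1}$ and its $\langle 3\rangle$-cover, the relevant motivic spheres, the Eilenberg--Mac~Lane spaces---is a loop space or a smooth algebraic group, hence simple, so $2$-localization is well behaved \cite[Theorem 4.3.9]{AFHLocalization} and commutes with the fibre sequences in play. Over the base $\mathscr{B} := \Omega^{1,0}S^{3,1} = \Omega^{1,0}\Sigma^{1,0}\pone = J(\pone)$ I would consider two fibrations: the James--Hopf map $H \colon \Omega^{1,0}S^{3,1} \to \Omega^{1,0}S^{5,2}$ of Proposition~\ref{prop:exceptionalEHP}, whose fibre, after $2$-localization, is the James inclusion $E \colon \pone \hookrightarrow \Omega^{1,0}\Sigma^{1,0}\pone = \Omega^{1,0}S^{3,1}$ (the unit of $\Sigma^{1,0}\dashv\Omega^{1,0}$); and the map $p := \Omega^{1,0}(c) \colon \Omega^{1,0}S^{3,1} \to \Omega^{1,0}K(\Z(1),3) = K(\Z(1),2)$, where $c\colon S^{3,1}\to K(\Z(1),3)$ is the fundamental class, so that $\fib(p) = \Omega^{1,0}S^{3,1}\langle 3\rangle$ by definition (here $\Omega^{1,0}K(\Z(1),3)\simeq K(\Z(1),2)$ because simplicial looping lowers the Eilenberg--Mac~Lane degree by one).

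Applying Proposition~\ref{prop:fibersofcomposites} to the commuting square with corners $\Omega^{1,0}S^{3,1}$, $\Omega^{1,0}S^{5,2}$, $K(\Z(1),2)$, $\ast$ and edges $H$, $p$, and the maps to $\ast$, the total fibre
\[
\mathscr{X} := \fib\bigl(\Omega^{1,0}S^{3,1} \xrightarrow{(p,H)} K(\Z(1),2) \times \Omega^{1,0}S^{5,2}\bigr)
\]
is computed two ways: taking the $p$-direction fibre first, $\mathscr{X} \simeq \fib\bigl(\Omega^{1,0}S^{3,1}\langle 3\rangle \to \Omega^{1,0}S^{5,2}\bigr)$ with the map induced by $H$; taking the $H$-direction fibre first, $\mathscr{X} \simeq \fib\bigl(\pone \xrightarrow{\bar{p}} K(\Z(1),2)\bigr)$, where $\bar{p}$ is $p$ restricted to the fibre $\pone$ of $H$, i.e.\ (up to the equivalence $\pone\weq\fib(H)$) the composite $p\circ E$.

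It then remains to identify $\bar p$ and its fibre. Since $E$ is the adjunction unit and $p=\Omega^{1,0}(c)$, the composite $\bar p = p\circ E$ is precisely the loop-adjoint of the fundamental class $c$, hence is the fundamental class $\pone\to K(\Z(1),2)$ (equivalently, $p$ classifies a generator of $\tilde H^2(\Omega^{1,0}S^{3,1};\Z(1))\cong\Z$---detected on the bottom summand $S^{3,1}=\Sigma^{1,0}\pone$ of the stable James splitting $\Sigma J(\pone)\simeq\bigvee_{n\geq 1}S^{2n+1,n}$ of \cite[Proposition 5.6]{WickelgrenWilliams}---and $E^*$ is an isomorphism on $\tilde H^2(-;\Z(1))$ by a connectivity count against the $2$-connected space $\Omega^2 S^{5,2}$). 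Under $K(\Z(1),2)\simeq B\Gm\simeq\PP^{\infty}$ \cite[\S 4 Proposition 3.8]{MV}, $\bar p$ is therefore the bottom-cell inclusion, i.e.\ the classifying map of $\mathcal{O}_{\pone}(\pm 1)$, a generator of $[\pone,B\Gm]=\Pic(\pone)\cong\Z$, whose homotopy fibre is the associated $\Gm$-torsor $\A^2\setminus 0\simeq S^{3,2}$. Equating the two computations of $\mathscr{X}$ yields the $2$-local fibre sequence
\[
S^{3,2} \longrightarrow \Omega^{1,0}S^{3,1}\langle 3\rangle \longrightarrow \Omega^{1,0}S^{5,2}
\]
in which the left-hand map is the fibre inclusion. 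As $S^{5,2}$ is $4$-connected, $\bpi_{3,2}(\Omega^{1,0}S^{5,2})=0$, so the long exact sequence forces $\bpi_{3,2}(S^{3,2})\to\bpi_{3,2}(\Omega^{1,0}S^{3,1}\langle 3\rangle)$ to be surjective; since $\bpi_{3,2}(S^{3,2})=GW(k)$ is generated by the identity, the fibre inclusion is induced by a generator of $\bpi_{3,2}$ of the middle term, as the statement requires.

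For the characteristic $0$ assertion, Theorem~\ref{thm:exoticequivalence}(2), together with $SL_2\simeq S^{3,2}$ and Theorem~\ref{thm:affinegrassmannianmodel}, gives an equivalence $\Omega^{1,0}S^{3,1}=J(\pone)\simeq\op{Gr}_{SL_2}=\Omega^{1,1}SL_2\simeq\Omega^{1,1}S^{3,2}$ compatible with the maps from the bottom cell ($\pone=F_{2,1}\hookrightarrow\op{Gr}_{SL_2}$ corresponding to the unit $\pone\to\Omega^{1,1}(S^{1,1}\wedge\pone)=\Omega^{1,1}S^{3,2}$, by Theorem~\ref{thm:mitchellfiltration}(1),(3) and the James-filtration comparison in the proof of Theorem~\ref{thm:exoticequivalence}). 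Under this equivalence $p$ is carried to $p':=\Omega^{1,1}(\text{fundamental class }S^{3,2}\to K(\Z(2),3))\colon\Omega^{1,1}S^{3,2}\to\Omega^{1,1}K(\Z(2),3)=K(\Z(1),2)$: exactly as above both $p$ and $p'$ restrict, along the bottom cell, to the fundamental class of $\pone$, and $\tilde H^2(\Omega^{1,0}S^{3,1};\Z(1))\to\tilde H^2(\pone;\Z(1))$ is an isomorphism, so $p$ and $p'$ classify the same class up to sign. Hence $\Omega^{1,1}S^{3,2}\langle 3\rangle=\fib(p')$ is identified with $\Omega^{1,0}S^{3,1}\langle 3\rangle=\fib(p)$, and transporting the previous fibre sequence along this identification gives the second fibre sequence with the analogous generator description. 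I expect the main obstacle to be precisely these bookkeeping identifications---that $\bar p$ is, up to a unit, the fundamental class (so that its fibre is exactly $\A^2\setminus 0$), and that the equivalence of Theorem~\ref{thm:exoticequivalence} is compatible with the maps to $K(\Z(1),2)$---both of which require careful tracking of motivic suspensions, weights, and adjunctions; the remainder is a formal consequence of Propositions~\ref{prop:fibersofcomposites} and~\ref{prop:exceptionalEHP}.
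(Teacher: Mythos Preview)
Your argument is correct and follows essentially the same route as the paper: both deduce the result from the square with vertices $\Omega^{1,0}S^{3,1}$, $\Omega^{1,0}S^{5,2}$, $K(\Z(1),2)$, $\ast$ by comparing horizontal and vertical fibers (you invoke Proposition~\ref{prop:fibersofcomposites} explicitly, the paper simply says ``taking vertical fibers''), identify the restricted map $\bar p\colon \pone \to K(\Z(1),2)$ with the fundamental class, and compute its fiber as $S^{3,2}$; the characteristic~$0$ case is then transported along the exotic equivalence of Theorem~\ref{thm:exoticequivalence} in both treatments. Your write-up is in fact somewhat more careful than the paper's about the bookkeeping identifications (the adjunction-unit description of $\bar p$, the $\Gm$-torsor realization of the fiber as $\A^2\setminus 0$, and the compatibility of $p$ and $p'$ under the exotic equivalence), and you supply the generator statement explicitly via the long exact sequence, which the paper leaves implicit.
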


\begin{proof}
	There is a commutative square of the form:
	\[
	\xymatrix{
		\Omega^{1,0} S^{3,1} \ar[r]\ar[d] & \Omega^{1,0} S^{5,3} \ar[d] \\
		K(\Z(1),2) \ar[r] & \ast
	}
	\]
	where the left vertical map is obtained by looping the fundamental class map $S^{3,1} \to K(\Z(1),3)$.  Proposition~\ref{prop:exceptionalEHP} describes the ($2$-local) fiber of the top row, and the unwinding the definition of the map $\Omega^{1,0}S^{3,1} \to K(\Z(1),2)$ we conclude that the induced map of fibers is the fundamental class map $S^{2,1} \to K(\Z(1),2)$.  By definition of $\eta$, one identifies $\fib(S^{2,1} \to K(\Z(1),2)) \cong S^{3,2}$ and the induced map as $\eta$.  Taking vertical fibers yields the first statement.
	
	For the second statement, assuming $k$ has characteristic $0$, consider the exotic equivalence $\Omega^{1,0} \Sigma^{1,0} S^{2,1} \to \Omega^{1,1}\Sigma^{1,1} S^{2,1}$ of Theorem~\ref{thm:exoticequivalence}.  Under that equivalence, there is an induced map $\Omega^{1,1}S^{3,2} \to K(\Z(1),2)$ that coincides with the map obtained by applying $\Omega^{1,1}$ to the fundamental class $S^{3,2} \to K(\Z(2),3)$.  Repeating the argument of the preceding paragraph with this new map yields the second fiber sequence.
\end{proof}



\begin{footnotesize}
\bibliographystyle{alpha}
\bibliography{freudenthalref}
\end{footnotesize}
\Addresses
\end{document}